\renewcommand{\Re} { \mathbf{Re}}
\renewcommand{\Im} {\mathbf{Im}}
\newtheoremstyle{note}{} {}{\itshape}{-6pt}{\bf}{. --}{ }{}
\theoremstyle{note}
\newtheorem{theo}{Theorem}
\newtheorem{prop}{Proposition}[section]
\newtheorem{lemm}[prop]{Lemma}
\newtheorem{coro}[prop]{Corollary}
\newtheorem{assu}[prop]{Assumption}
\newtheorem{rema}[prop]{Remark}
\newcommand{\drawcarre}[3]{
\draw[ black, fill=#3] (#1,#2) -- (#1+1,#2) -- (#1+1,#2+1)--(#1,#2+1)--cycle;}
\numberwithin{equation}{section}
\subjclass[2010]{34L20, 35Pxx, 35Q93 58J40, 93Dxx}
\title[Stabilisation with rough dampings]{Stabilisation of wave equations on the torus with rough dampings}
\author{N. Burq}
\address{Laboratoire de Math\'ematiques d'Orsay, CNRS, Universit\'e Paris-Saclay, B\^atiment~307, 91405 Orsay Cedex et Institut Universitaire de France}
 \email{nicolas.burq@math.u-psud.fr}
 \author{P. G\'erard }
 \address{Laboratoire de Math\'ematiques d'Orsay, CNRS, Universit\'e Paris-Saclay, B\^atiment~307, 91405 Orsay Cedex} 
 \email{patrick.gerard@math.u-psud.fr}
 \date{December 6, 2019}
\begin{document}

\begin{abstract}  For the damped wave equation on a compact manifold with {\em continuous} dampings, the geometric control condition is necessary and sufficient for {uniform} stabilisation. In this article, on the two dimensional torus, in the special case where  $a(x) = \sum_{j=1}^N a_j 1_{x\in R_j}$ ($R_j$ are polygons), we give a very simple necessary and sufficient geometric condition for uniform stabilisation. We also propose a natural generalization of the geometric control condition which makes sense for $L^\infty$ dampings. We show that this condition is always necessary for uniform stabilisation (for any compact (smooth) manifold and any $L^\infty$ damping), and we prove that it is sufficient in our particular case on $\mathbb{T}^2$ (and for our particular dampings).
\\
\ \vskip .1cm 
\noindent {\sc R\'esum\'e.} Pour l'\'equation des ondes amortie sur une vari\'et\'e compacte, dans le cas d'un amortissement {\em continu}, la condition de contr\^ole g\'eom\'etrique est n\'ecessaire et suffisante pour la stabilisation uniforme. Dans cet article,  sur le tore $\mathbb{T}^2$ et dans le cas o\`u  $a(x) = \sum_{j=1}^N a_j 1_{x\in R_j}$ ($R_j$ sont des polygones), nous exhibons une condition g\'eom\'etrique n\'ecessaire et suffisante tr\`es simple. Nous proposons aussi une g\'en\'eralisation naturelle de la condition de contr\^ole g\'eom\'etrique, pour un amortissement seulement $L^\infty$. Cette g\'en\'eralisation est toujours n\'ecessaire pour la stabilisation uniforme (sur toute vari\'et\'e compacte r\'eguli\`ere), et nous d\'emontrons dans cet article qu'elle est suffisante dans notre cas particulier du tore $\mathbb{T}^2$ (et pour nos fonctions d'amortissement particuli\`eres).
\end{abstract} 

\ \vskip -1cm \hrule \vskip 1cm 
 \maketitle
{ \textwidth=4cm \hrule}
 \section{Notations and main results}
 Let $(M,g)$ be a (smooth) compact Riemanian manifold endowed with the metric $g$, $ \Delta_g$ the Laplace operator on functions on $M$ and for $a\in L^\infty(M)$,  let us consider the damped wave (or Klein-Gordon) equation 
 \begin{equation}\label{eq.amortie}
 (\partial_t ^2 - \Delta + a(x) \partial_ t  +m) u =0, \quad (u\mid_{t=0}, \partial_t u \mid_{t=0} ) = (u_0, u_1) \in (H^1\times L^2) (M),
 \end{equation}
 where $0 \leq m\in L^\infty(M)$. If $a\geq 0$ a.e. it is well known that the energy 
 \begin{equation}\label{energy}
  E_m(u) (t) = \int_M( |\nabla_g u|_g ^2+ |\partial_t {u}|^2+ m |u|^2) d{\rm vol}_g
  \end{equation}
 is decaying and satisfies 
 $$ E_m(u) (t) = E_m(u)(0)- \int_0 ^t \int _M 2 a(x) |\partial_tu |^2 d{\rm vol}_g .$$ 
 We shall say that the {\em uniform stabilisation} holds for the damping $a$  if one of the following equivalent properties holds (see appendix~\ref{app:C} for the equivalence).
 \begin{enumerate}
 \item There exists a rate $f(t)$ such that $\lim_{t\rightarrow + \infty } f(t) =0$ and for any $(u_0, u_1) \in (H^1\times L^2) (M)$,
 $$ E_m(u) (t) \leq f(t) E_m(u) (0).$$
 \item There exists $C,c>0$ such that  for any $(u_0, u_1) \in (H^1\times L^2) (M)$,
 $$ E_m(u) (t) \leq Ce^{-ct} E_m(u) (0).$$
 \item There exists $T>0$ and $c>0$ such that  for any $(u_0, u_1) \in (H^1\times L^2) (M)$, if $u$ is the solution to the damped wave equation~\eqref{eq.amortie}, then 
 $$ E_m(u) (0) \leq C \int_0 ^T \int _M 2 a(x) |\partial_tu |^2 d{\rm vol}_g.$$
 \item There exists $T>0$ and $c>0$ such that  for any $(u_0, u_1) \in (H^1\times L^2) (M)$, if $u$ is the solution to the undamped wave equation
 \begin{equation}\label{eq.nonamortie}
 (\partial_t ^2 - \Delta+m ) u =0, \quad (u\mid_{t=0}, \partial_t u \mid_{t=0} ) = (u_0, u_1) \in (H^1\times L^2) (M)
 \end{equation}
 then 
 $$ E_m(u) (0) \leq C \int_0 ^T \int _M 2 a(x) |\partial_tu |^2 d{\rm vol}_g.$$
 \end{enumerate}
 
The following result is classical (see the works by Rauch-Taylor~\cite{RaTa74, RaTa75}, Babich-Popov~\cite{BaPo81}, Babich-Ulin~\cite{BaUl81}, Ralston~\cite{Ra82},  Bardos-Lebeau-Rauch~\cite{BaLeRA92}, Burq-G\'erard~\cite{BuGe96}, Lebeau~\cite{Le96}, Koch-Tataru~\cite{KoTa95},  Sj\"ostrand~\cite{Sj00}, Hitrik~\cite{Hi04} )
\begin{theo}[Bardos-Lebeau-Rauch~\cite{BaLeRA92}, Burq-G\'erard~\cite{BuGe96}]
Let $m\geq 0$. Assume that the damping $a$ is continuous. For $\rho_0= (x_0, \xi_0) \in S^*M$ denote by $\gamma_{\rho_0}(s) $ the geodesic starting from $x_0$ in (co)-direction $\xi_0$. Then the damping $a$ stabilizes uniformly the wave equation iff the following geometric condition is satisfied 
\begin{equation*}\tag{GCC}\label{gcc}
\exists T, c>0; \inf_{\rho_0 \in S^*M} \int_0^T a( \gamma_{\rho_0} (s){)} ds \geq c.
\end{equation*}
\end{theo}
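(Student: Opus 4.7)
The plan is to reduce the problem to the observability inequality formulation, namely item~(4) in the list, which is equivalent to uniform stabilisation and is the form best adapted to microlocal analysis. I would then prove the two directions separately, with the sufficiency of (GCC) being the substantive part.

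For necessity, I would argue by constructing a family of concentrated high-frequency solutions of \eqref{eq.nonamortie} along a geodesic that (GCC) misses. If (GCC) fails, a compactness argument in $S^*M$ --- relying on continuity of $a$ together with continuous dependence of geodesics on initial data --- produces a point $\rho_\infty \in S^*M$ such that $a(\gamma_{\rho_\infty}(s))=0$ for all $s\geq 0$. I would then construct a Gaussian beam sequence $v_h$ (in the spirit of Ralston and Babich--Ulin) of unit energy solutions of \eqref{eq.nonamortie}, microlocalised in a tube of shrinking radius around this geodesic. Continuity of $a$ forces $\int_0^T\int_M a|\partial_t v_h|^2\, d{\rm vol}_g \to 0$, while $E_m(v_h)(0)=1$, contradicting~(4).

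For sufficiency, I would use the microlocal defect (H-)measure method of G\'erard. Arguing by contradiction, if (4) fails for every $C,T$, a diagonal extraction produces a sequence $u_n$ of solutions of \eqref{eq.nonamortie} with $E_m(u_n)(0)=1$ and $\int_0^T\int_M a|\partial_t u_n|^2\,d{\rm vol}_g\to 0$ for some fixed large $T$. Extracting a semiclassical defect measure $\mu$ on $S^*M\times[0,T]$ associated with this sequence, two properties are key: first, $\mu$ is invariant under the geodesic flow, because $u_n$ solves the free wave equation whose principal symbol is real; second, the spatial projection of $\mu$ vanishes on the open set $\{a>0\}$, because $a$ is continuous and $\int a|\partial_t u_n|^2\to 0$. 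The (GCC) hypothesis then asserts that the orbit through every $\rho\in S^*M$ meets $\{a>0\}$ within time $T$, so flow invariance forces $\mu\equiv 0$, contradicting the fact that the unit-energy normalisation ensures $\mu$ has positive total mass.

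The main obstacle is twofold. The first point is that one must actually guarantee a nonzero microlocal defect measure from a unit-energy sequence, i.e.\ rule out concentration at bounded frequencies. This is handled by the standard compactness--uniqueness argument: one first proves a weaker observability inequality modulo a compact lower-order remainder, then removes the remainder using unique continuation for $\partial_t^2-\Delta+m$ on a cylinder $M\times(0,T)$ (via Holmgren in the real-analytic setting, or Carleman estimates more generally). The second, sharper point is that the vanishing of $\mu$ on $\{a>0\}$ genuinely requires continuity of $a$, since one needs $\{a>0\}$ to be open in order to localise test symbols there; for merely $L^\infty$ dampings this step breaks down, which is precisely the obstruction the present paper is designed to overcome.
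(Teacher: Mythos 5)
Your sketch is a correct account of the classical proof, and it is essentially the approach used in the paper's own arguments for the $L^\infty$ case (Section~\ref{sec.2.1}, Appendix~A, Appendix~B), which specialise to the continuous case to give exactly what you describe. Note first that the paper does not actually prove this theorem — it is quoted as background and cited to Bardos--Lebeau--Rauch and Burq--G\'erard. Comparing with the paper's internal machinery, the one methodological divergence worth noting is how the bounded-frequency regime and the positivity of the limiting measure are handled. You propose the time-dependent compactness--uniqueness route: prove observability modulo a compact remainder and then remove it by unique continuation on a cylinder $M\times(0,T)$ via Holmgren or Carleman. The paper instead passes through Proposition~\ref{prop.resolvent} to reduce stabilisation to the stationary quasimode estimate~\eqref{obs} (Proposition~\ref{prop.B.2}); the high-frequency defect-measure argument is then run on solutions of $(h^2\Delta+1)u=f$, and the low- and zero-frequency regimes are disposed of by a Fredholm alternative together with unique continuation for eigenfunctions (an $L^2$-eigenfunction cannot vanish on a set of positive measure). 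The two routes are equivalent in outcome: the stationary framing isolates the semiclassical parameter cleanly and feeds directly into the second-microlocalization later in the paper, while the time-dependent framing you use is closer to the original Burq--G\'erard presentation and makes the geodesic-flow invariance more transparent. For the necessity direction your Gaussian-beam construction coincides with the one the paper invokes (Proposition~\ref{ralston}) to prove that~\eqref{wgcc} and~\eqref{ggcc} are necessary; your compactness argument producing a limiting ray on which $a\circ\gamma$ vanishes identically is the correct way to extract, from the failure of~\eqref{gcc}, a single geodesic to concentrate on.
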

When the damping $a$ is no more continuous but merely $L^\infty$, we can prove
\begin{theo}\label{strong-weak}
Assume that $a \in L^\infty (M)$. Then the following strong geometric condition 
\begin{equation*} \tag{SGCC}\label{sgcc}
\exists T, c>0; \forall \rho_0 \in S^*M, \exists s\in (0, T), \exists \delta >0; a\geq c \text{ a.e. on } B( \gamma_{\rho_0} (s), \delta).
\end{equation*} 
is {\em sufficient} for uniform stabilisation, and  the following weak geometric condition 
\begin{equation*} \tag{WGCC}\label{wgcc}
\exists T >0; \forall \rho_0 \in S^*M, \exists s\in (0, T); \gamma_{\rho_0} (s)\in \text{supp} (a) 
\end{equation*} where $\text{supp}(a)$ is the support (in the distributional sense) of  $a$, is {\em necessary} for uniform stabilisation. 
\end{theo}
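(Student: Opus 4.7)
I would prove the two implications separately, using in both cases the equivalent formulation (4) of uniform stabilisation (observability for the \emph{undamped} wave equation) together with semiclassical/microlocal defect measures.

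\textbf{Necessity of WGCC.} Argue by contraposition. If WGCC fails, then for every $T>0$ there exists $\rho_0=(x_0,\xi_0)\in S^*M$ with $\gamma_{\rho_0}(s)\notin\supp(a)$ for all $s\in(0,T)$. By compactness of the geodesic segment and closedness of $\supp(a)$, some $\delta_0>0$ satisfies $\dist(\gamma_{\rho_0}(s),\supp(a))\geq\delta_0$ for $s\in[0,T]$. Build a family of Gaussian beams (at semiclassical scale $h_n\to 0$) whose data at $t=0$ are concentrated at $\rho_0$; the resulting solutions $u_n$ of the undamped wave equation satisfy $E_m(u_n)(0)\sim 1$ and have $L^2$ energy density concentrated in a tube of width $\sim h_n^{1/2}$ about the geodesic $\{\gamma_{\rho_0}(s):s\in[0,T]\}$. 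For $n$ large this tube lies in the open set $\mathcal{O}=\{x:\dist(x,\{\gamma_{\rho_0}(s):s\in[0,T]\})<\delta_0/2\}$, which is disjoint from $\supp(a)$; in particular $a=0$ a.e.\ on $\mathcal{O}$, hence
$$\int_0^T\int_M a|\partial_tu_n|^2\,d\mathrm{vol}_g \leq \|a\|_{L^\infty}\int_0^T\|\partial_tu_n\|_{L^2(M\setminus\mathcal{O})}^2\,ds\longrightarrow 0,$$
while $E_m(u_n)(0)\sim 1$, contradicting observability.

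\textbf{Sufficiency of SGCC.} Again by contraposition. Assume observability fails and pick a sequence $(u^n)$ of solutions to the undamped wave equation with $E_m(u^n)(0)=1$ and $\int_0^T\int_M a|\partial_tu^n|^2\to 0$. A standard compactness/unique-continuation argument handles the low-frequency/$L^2$ component, so one may assume $(u^n,\partial_tu^n)\rightharpoonup 0$ weakly and extract a microlocal defect measure $\mu$ on $[0,T]\times S^*M$ which is nonnegative, nontrivial and invariant under the geodesic flow. The only new point compared to the continuous case is that $a$ cannot itself serve as a test symbol. Fix $\rho_0\in S^*M$; by SGCC, pick $s\in(0,T)$, $\delta>0$, $c>0$ with $a\geq c\mathbf{1}_{B(\gamma_{\rho_0}(s),\delta)}$ a.e., and choose $\chi\in C_c^\infty(B(\gamma_{\rho_0}(s),\delta))$ with $0\leq\chi\leq 1$ and $\chi\equiv 1$ near $\gamma_{\rho_0}(s)$. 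Since $c\chi\leq a$ a.e.,
$$c\int_0^T\int_M\chi|\partial_tu^n|^2\,d\mathrm{vol}_g \leq \int_0^T\int_M a|\partial_tu^n|^2\,d\mathrm{vol}_g\longrightarrow 0,$$
and $\chi$ being continuous, one may test $\mu$ against its pullback to $S^*M$ and conclude that $\mu$ vanishes on an open neighbourhood of $(s,\rho_0)$ in $[0,T]\times S^*M$. By flow invariance, $\mu$ vanishes on a neighbourhood of the whole orbit of $\rho_0$. Since $\rho_0$ was arbitrary, $\mu\equiv 0$, a contradiction.

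The main technical obstacle is the sufficiency part. The Bardos--Lebeau--Rauch argument tests $\mu$ directly against the damping symbol $a$, which is legitimate only when $a$ is continuous. The workaround above replaces $a$ by a continuous cutoff $\chi$ bounded above by $a/c$ on the ball supplied by SGCC; one must still check that the extraction of $\mu$ and its flow invariance go through with only $a\in L^\infty$, which they do because these are properties of the undamped wave operator while $a$ enters only as a nonnegative $L^\infty$ weight in the estimate. The necessity part is more routine; the only care is the standard construction of concentrated Gaussian beams on a general compact manifold and the observation that $a=0$ a.e.\ off $\supp(a)$ suffices to kill the damping integral once the beam is confined to a neighbourhood of the geodesic disjoint from $\supp(a)$.
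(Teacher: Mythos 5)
Your proposal is correct and follows essentially the same strategy as the paper: for sufficiency, a flow-invariant microlocal defect measure that (SGCC) forces to vanish via a continuous cutoff dominated by $a$ (your $\chi$ is exactly what the paper's set $S=\{x:\exists\delta,c>0,\ a\geq c\text{ a.e. on }B(x,\delta)\}$ encodes), and for necessity, Ralston's Gaussian-beam quasimodes concentrated along a geodesic that stays a positive distance from $\supp(a)$. The only packaging difference is that you run the argument directly on the time-dependent observability formulation (4) with a space-time defect measure, whereas the paper first reduces via Proposition~\ref{prop.B.2} to the stationary semiclassical resolvent estimate~\eqref{obs} and works with a measure on $T^{*}M$.
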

Though the question appears to be very natural, until the present work, the only known case in between was essentially an example of Lebeau~\cite[pp 15--16]{Le92-1} (from an idea of J. Rauch) where $M= \mathbb{S}^d$ and $a$ is the characteristic function of the half-sphere (notice however some refinements of~\eqref{wgcc} in~\cite{HuPrTr17, HuPrTr17-1}).  In this case of the half sphere, uniform stabilisation holds (see Zhu~\cite{Zh15} for a more detailled proof and a generalization of this result). 
\begin{theo}[Lebeau, \cite{Le92-1}]
On the $d$-dimensional sphere, 
$$\mathbb{S}^d= \{ x= (x_0, {\dots },  x_d) \subset \mathbb{R}^{d+1}; \| x\| =1\},$$ uniform stabilisation holds for the characteristic function of the half sphere $\mathbb{S}^d_+= \{ x= (x_0, {\dots },  x_d) \subset \mathbb{R}^{d+1}; \| x\| =1, x_0 >0\}$.
\end{theo}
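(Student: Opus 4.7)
The plan is to verify the observability characterization (4) of uniform stabilisation, applied to the damping $a=\mathbf{1}_{\mathbb{S}^d_+}$. I argue by contradiction: suppose observability fails, so there is a sequence of solutions $u_n$ of the undamped wave equation with $E_m(u_n)(0)=1$ and $\int_0^T\!\int_{\mathbb{S}^d_+}|\partial_t u_n|^2\,d{\rm vol}\,dt\to 0$. After the usual splitting into low and high frequencies, the low-frequency component being eliminated by unique continuation for the stationary Klein--Gordon equation, I may assume that the sequence is concentrated at a semiclassical scale $h_n\to 0$ and extract a nontrivial semiclassical defect measure $\mu$ on $[0,T]\times S^*\mathbb{S}^d$. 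The measure $\mu$ is positive, invariant under the geodesic flow on $S^*\mathbb{S}^d$ (lifted trivially in $t$), and the hypothesis forces $\mu((0,T)\times\pi^{-1}(\mathbb{S}^d_+))=0$.

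Next I localise the support of $\mu$. On $\mathbb{S}^d$ a geodesic is a great circle of the form $\gamma(s)=(\cos s)\,p+(\sin s)\,v$ with $p\in\mathbb{S}^d$ and $v\in T_p\mathbb{S}^d$, $|v|=1$; along it, the first coordinate equals $p_0\cos s+v_0\sin s$, which is $\leq 0$ for every $s\in\mathbb{R}$ only if $p_0=v_0=0$, that is, only if $\gamma$ lies entirely in the equator $\{x_0=0\}\simeq\mathbb{S}^{d-1}$. Hence $\supp\mu$ is contained in the cosphere bundle of the equatorial $(d-1)$-sphere, viewed inside $S^*\mathbb{S}^d$.

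The crucial final step is to rule out such a concentrated measure, and this I expect to be the main obstacle. Here I use the reflection $R:(x_0,x')\mapsto(-x_0,x')$, which is an isometry of $\mathbb{S}^d$ commuting with $\Delta$ (and with $m$, assumed $R$-invariant or simply $m=0$). Splitting $u_n=u_n^{+}+u_n^{-}$ into $R$-symmetric and $R$-antisymmetric parts, which are themselves solutions of the wave equation, the $R$-invariance of $|u_n^{\pm}|^2$ yields $\int_{\mathbb{S}^d_+}|\partial_t u_n^{\pm}|^2=\tfrac12\int_{\mathbb{S}^d}|\partial_t u_n^{\pm}|^2$, so each parity sector already observes half of its own kinetic energy. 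The genuine difficulty is the cross term $\int_{\mathbb{S}^d_+}\partial_t u_n^{+}\,\overline{\partial_t u_n^{-}}$, which is not killed by $L^2$-orthogonality since $R$-even and $R$-odd spherical harmonics can share the same Laplace eigenvalue. To neutralise it, I perform a second-microlocal analysis near the equator: the concentration of $\mu$ on the equatorial cosphere bundle forces $u_n^{\pm}$ to be semiclassically tangential, so that a commutator computation involving a cutoff $\chi(x_0/h_n)$ and the semiclassical normal derivative $h_nD_{x_0}$ produces a strictly positive lower bound for $\int_0^T\!\int_{\mathbb{S}^d_+}|\partial_t u_n|^2$ in terms of $E_m(u_n)(0)$, contradicting the hypothesis.
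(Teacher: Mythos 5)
The paper does not actually prove this theorem: it is quoted from Lebeau~\cite{Le92-1} (with a detailed proof and generalization credited to Zhu~\cite{Zh15}), and the only comment the paper makes about it is the remark that every geodesic except the equatorial ones satisfies~\eqref{sgcc}. So your proposal cannot be compared against an argument given in the paper; it has to be evaluated on its own merits.

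The first part of your plan is sound: passing to observability (4), extracting a semiclassical defect measure, killing the low frequencies by unique continuation, and showing that any geodesic not entirely contained in the equator $\{x_0=0\}$ enters $\mathbb{S}^d_+$ (so $\supp\mu$ lies over the equatorial cosphere bundle) are all correct, and the reflection $R:(x_0,x')\mapsto(-x_0,x')$, together with the parity splitting $u_n=u_n^{+}+u_n^{-}$, is indeed a central ingredient of the known argument. You have also correctly located the obstruction in the cross term $\int_{\mathbb{S}^d_+}\partial_t u_n^{+}\overline{\partial_t u_n^{-}}$, which is not killed by $L^2$-orthogonality.

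The final step, however, does not work as stated, and this is where a genuine idea is missing. First, the scale is wrong: a Gaussian beam on the equatorial great circle (and on a Zoll manifold these are essentially exact eigenfunctions) concentrates transversally at scale $h^{1/2}$, not $h$, so a cutoff $\chi(x_0/h_n)$ sees only a vanishing fraction of the mass; there is also no sphere analogue of the non-concentration estimate (Proposition~\ref{aprioriprop}) that the paper proves on the torus using the product structure, so the second-microlocal machinery of Section~\ref{sec.2} does not transpose. Second, the cross term is genuinely of the same order as the diagonal terms: writing $u_n^{+}\sim A$, $u_n^{-}\sim x_0 B$ near the equator with transverse scale $\ell=h^{1/2}$, one finds $\int_{x_0>0}u_n^{+}\overline{u_n^{-}}\sim \|u_n^{+}\|\,\|u_n^{-}\|$, so a commutator ``tangentiality'' argument cannot make it small. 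What actually makes the argument close is the additional invariance of $\mathbb{S}^d_+$ under the subgroup $SO(d)$ of rotations fixing the $x_0$-axis, combined with the Zoll property of $\mathbb{S}^d$. Indeed, if $P_k$ denotes the projection on degree-$k$ spherical harmonics, then $P_k\,\mathbf 1_{\mathbb{S}^d_+}P_k$ commutes with the $SO(d)$-action and with $R$ (up to $R\,\mathbf 1_{\mathbb{S}^d_+}R=\mathbf 1_{\mathbb{S}^d_-}$), which forces $P_k\,\mathbf 1_{\mathbb{S}^d_+}P_k=\tfrac12 P_k$; together with the almost-integer spacing of $\sqrt{k(k+d-1)}$, which lets the time integral over a period $T=2\pi$ decouple distinct $k$'s, this yields the observability constant $\tfrac12$. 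Your proof does not use the azimuthal symmetry nor the periodicity of the geodesic flow, and neither can be dispensed with here; these, rather than a second-microlocalization at scale $h$, are the missing ingredients.
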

\begin{rema} Notice that in this case, all the geodesics enter the interior of the support of $a$, and hence fulfill the~\eqref{sgcc} requirements, except the family of geodesics included in the boundary of the support of $a$, the $d-1$ dimensional sphere, 
$$\partial \mathbb{S}^d_+= \{ x= (x_0,{\dots },  x_d) \subset \mathbb{R}^{d+1}; \| x\| =1, x_0=0\}.$$
\end{rema}

 When the manifold is a two dimensional torus (rational or irrational)and the damping $a$ is a linear combination of characteristic functions of polygons, i.e.
there exists $N$, $R_j, j=1, \dots N$ (disjoint and non necessarily vertical) polygons and $0<a_j,j=1, \dots , N$ such that
   \begin{equation}\label{somme}
    a(x) = \sum_{j=1}^N a_j 1_{x\in R_j},
    \end{equation}
  { We can state  another natural simple geometric condition. 
  Let us endow the torus with an orientation (i.e. we see the torus as a surface in $\mathbb{R}^2$ and define at each point a normal vector $n(x)$). For any initial point $x_0$ and any norm-$1$ tangent vector $X_0$, let $\gamma$ be the geodesic starting from $x_0$ in direction $X_0$ and parametrized by arc length. Let $\nu(\gamma(s))$ be the unique vector normal to $\gamma$ in the torus and such that $(n(\gamma(s)) , \dot {\gamma}(s), \nu(\gamma(s)))$ is a direct orthonormal frame. By convention, we shall say that $\nu$ points to the left of the geodesic (and $- \nu$ to the right).}
   
   \begin{assu}\label{geom}
   Assume that the manifold is a two dimensional torus $\mathbb{T}^2= \mathbb{R}^2/ A\mathbb{Z} \times B\mathbb{Z}$, $a;B>0$. Assume that there exists $T>0$ such that all geodesics (straight lines) of length $T$ either encounters the interior of one of the polygons  or follows for some time one of the sides of a polygon $R_{j_1}$ {\em on the left} and for some time one of the sides of a   polygon $R_{j_2}$ (possibly the same) {\em on the right}. 
   \end{assu}
   Our main result is the following
\begin{theo}\label{theorem.1}
The damping $a$ stabilizes uniformly the wave equation if and only if Assumption~\ref{geom} is satisfied. 
   \end{theo}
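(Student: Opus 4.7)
The plan is to establish necessity and sufficiency separately, using WKB quasi-modes for one direction and microlocal defect measures for the other.

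\textbf{Necessity.} Suppose Assumption~\ref{geom} fails. Then for every $T$ there exists a geodesic of length $T$ that avoids every polygon interior and, after choosing an orientation, grazes only polygons lying on its left (or grazes no polygon at all). By compactness of $S^*\mathbb{T}^2$ and a diagonal extraction, one obtains an infinite geodesic $\gamma$ with the same one-sided property. If $\gamma$ never meets $\operatorname{supp}(a)$, the conclusion follows from the necessity of~\eqref{wgcc} in Theorem~\ref{strong-weak}. Otherwise I would construct a Gaussian-beam quasi-mode of the undamped equation~\eqref{eq.nonamortie} concentrated in a tube of transverse width $\lambda^{-1/2}$ strictly on the right-hand side of $\gamma$: the flat geometry together with the straightness of polygon edges ensures that, for $\lambda$ large, this tube is disjoint from $\bigcup R_j$ on the time interval $[0,T]$. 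The resulting $u_\lambda$ has unit-order energy, is an approximate solution of the undamped equation, and satisfies $\int_0^T\!\!\int_M a|\partial_t u_\lambda|^2\,dx\,dt \to 0$, violating the observability inequality in item (4) of the uniform stabilisation criterion.

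\textbf{Sufficiency.} Assume the Assumption holds and suppose, for contradiction, that observability fails: there exists a sequence $(u_n)$ of solutions of~\eqref{eq.nonamortie} with $E_m(u_n)(0)=1$ and $\int_0^T\!\!\int_M a|\partial_t u_n|^2\,dx\,dt\to 0$. I extract a microlocal defect measure $\mu$ on $S^*\mathbb{T}^2\times[0,T]$; since $(\partial_t^2 - \Delta + m)$ has smooth coefficients, the standard propagation argument makes $\mu$ invariant under the (lifted) geodesic flow. The damping condition forces $\mu$ to give zero mass to the set $\{a>0\}$, hence by flow invariance $\operatorname{supp}\mu$ consists of geodesics whose base projection avoids every open polygon $R_j^\circ$. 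By Assumption~\ref{geom} each such geodesic $\gamma$ must graze an edge of some $R_{j_1}$ on its left and an edge of some $R_{j_2}$ on its right. The remaining essential task is to prove that $\mu$ vanishes on every such bilaterally grazing geodesic, which would contradict the normalization $\int_0^T E_m(u_n)\,dt \equiv T$.

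\textbf{Main obstacle.} This last step is the heart of the argument. A measure concentrated on a geodesic that lies exactly on polygon edges is \emph{not} automatically killed by the vanishing of $\int_0^T\!\!\int a|\partial_t u_n|^2$, because polygon edges are Lebesgue-null and the value of the $L^\infty$ function $a$ on them has no intrinsic meaning. I would address this by a second microlocalization transverse to the grazed edge. In local coordinates where an edge grazed by $\gamma$ on the left becomes $\{x_2=0\}$ with $R_{j_1}=\{x_2<0\}$, the damping reads $a = a_{j_1}\mathbf{1}_{x_2<0}$ plus smooth terms. Decomposing $u_n$ according to the sign of the transverse cofrequency $\eta$, one isolates a ``left-supported'' piece on which $a_{j_1}$ acts genuinely, so that the bulk integral $\int a_{j_1}|\partial_t u_n|^2$ forces the left-hand transverse profile of $\mu$ to vanish near that edge. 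The symmetric statement at a later point where $\gamma$ grazes $R_{j_2}$ on its right kills the right-hand profile, and since $\mu$ is flow-invariant along the single orbit $\gamma$, both pieces vanish simultaneously, giving $\mu|_\gamma = 0$. The polygonal hypothesis is indispensable here: straight edges allow a global transverse coordinate along each grazing geodesic, and the finitely many polygon vertices crossed by $\gamma$ contribute only a negligible exceptional set of directions that can be handled separately.
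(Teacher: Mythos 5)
Your necessity argument is sound and essentially reproduces the paper's strategy (Theorem~\ref{theorem.4} via Ralston's Gaussian beams, specialized through Proposition~\ref{equiv}): one shifts the beam a distance $\sigma$ to the unguarded side of the exceptional geodesic, takes $\lambda^{-1/2}\ll\sigma\to 0$, and uses the Gaussian decay plus the straightness of the polygon edges to kill $\int a|\partial_t u_\lambda|^2$. The corner case (geodesic touching polygons only at vertices) needs a word — the paper shows the tube integral is then $O(\epsilon^2)$ rather than $O(\epsilon)$ — but the idea is right.

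The sufficiency direction has two genuine gaps, and they are precisely where the real work lies.

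First, you never establish where the second-microlocal measure lives. Your plan is to ``decompose $u_n$ according to the sign of the transverse cofrequency $\eta$'', but without an \emph{a priori} non-concentration bound the second semiclassical measure can charge compact sets of the rescaled variables $(z,\zeta)=(\epsilon(h)h^{-1/2}x,\epsilon(h)h^{1/2}\xi)$, and the whole argument collapses: the one-sided support information you extract from $\|a^{1/2}u_n\|\to 0$ is only a statement at infinity in $(z,\zeta)$, and the transport law for the part of $\mu$ at finite $(z,\zeta)$ contains the extra Schr\"odinger-type term $(\epsilon(h)h^{-1/2})^2\partial_z^2$ from the commutator, which is not small. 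The paper's Proposition~\ref{aprioriprop} — a Kakeya--Nikodym-type estimate proved by reducing to a 1D Helmholtz ODE in the transverse variable and exploiting the product structure $\mathbb{T}^2=\mathbb{T}_x\times\mathbb{T}_y$ — is what pushes $\mu$ to the sphere at infinity. This is, as the paper says explicitly, the only place the torus geometry enters, and it is not optional.

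Second, your concluding step — ``since $\mu$ is flow-invariant along the single orbit $\gamma$, both pieces vanish simultaneously'' — does not hold. Naive flow-invariance $\eta\partial_y\mu=0$ would indeed transport one-sided vanishing everywhere along $\gamma$, but the second-microlocal measure is \emph{not} invariant under $\eta\partial_y$. On the sphere at infinity, parametrized by $\theta$ (with $z=r\cos\theta$, $\zeta=r\sin\theta$), the transport equation is $(\eta\partial_y-\sin^2(\theta)\partial_\theta)\mu=0$: the transverse angle $\theta$ evolves as $y$ does. Vanishing for $z>0$ at $y\in(0,1/2)$ and vanishing for $z<0$ at $y\in(-1/2,0)$ are constraints at \emph{different} points of the orbit on \emph{different} half-spheres of $\theta$, and there is no direct contradiction with a measure concentrated, say, on $z<0$ over $y\in(0,1/2)$. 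What closes the argument in the paper is the dynamics $\dot\theta=-\sin^2\theta$ with fixed points at $\theta\equiv 0\ (\mathrm{mod}\ \pi)$: every trajectory eventually enters a half-sphere forbidden at some $y$, and only then do you get $\mu\equiv 0$. Without this curvature-of-$\theta$ mechanism, your argument cannot conclude. Relatedly, the global change of variables (Lemma~\ref{lem.2.1}) that aligns the exceptional direction with a coordinate axis while preserving the periodicity needed for the non-concentration estimate is a non-trivial point your sketch does not address.
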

   \begin{coro}
 Stabilisation holds for the examples 1.a and 1.d  but not for examples 1.b, 1.c and 1.e of figure~\ref{fig.1}
  \end{coro}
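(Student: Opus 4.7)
The plan is to apply Theorem~\ref{theorem.1} to each of the five configurations shown in figure~\ref{fig.1}: it suffices to verify Assumption~\ref{geom} for 1.a and 1.d, and to exhibit in each of 1.b, 1.c, 1.e a geodesic that violates it. The geodesic flow on $\mathbb{T}^2$ splits into two qualitatively different regimes, and this dichotomy organises the whole verification. A geodesic of irrational slope is equidistributed on the torus, hence enters the interior of every non-trivial polygon $R_j$ within a time that is uniformly bounded from above; for such geodesics the first alternative of Assumption~\ref{geom} is automatically satisfied. The verification therefore reduces to the finitely many rational directions and, within each of them, to the finitely many closed orbits that avoid the union $\bigcup_j \mathrm{Int}(R_j)$.

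For the positive cases 1.a and 1.d I would read off from the figure the list of exceptional rational closed geodesics and check that each of them satisfies the second alternative of Assumption~\ref{geom}: it coincides on some sub-interval with a side of a polygon $R_{j_1}$ lying to its left \emph{and} on some other sub-interval with a side of a polygon $R_{j_2}$ (possibly equal to $R_{j_1}$) lying to its right. The uniform time $T$ demanded by Assumption~\ref{geom} is then the maximum of the finitely many periods of these exceptional orbits together with the uniformly bounded transit time of a generic, irrational-slope geodesic into the interior of the nearest polygon.

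For the negative cases 1.b, 1.c, 1.e it is enough to produce a single geodesic $\gamma$ witnessing the failure of Assumption~\ref{geom}. In 1.b and 1.c I expect a closed geodesic slipping between the polygons that meets neither their interiors nor any of their sides; for such a $\gamma$ no alternative in Assumption~\ref{geom} can hold, whatever the value of $T$. In 1.e the figure is arranged so that a geodesic grazes several sides of polygons, but all the polygons it meets tangentially lie on the \emph{same} side of $\gamma$, so the required left/right alternation fails even though $\gamma$ does hit the support of $a$. In each case Theorem~\ref{theorem.1} gives the desired absence of uniform stabilisation.

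The main delicacy, and the only place where the verification is not a purely visual check, lies in the left/right bookkeeping in 1.e (and, to a lesser extent, in 1.a and 1.d): one must fix once and for all an orientation of each exceptional closed geodesic and use the intrinsic convention defining $\nu$ from the beginning of the section, so that every side of a polygon grazed by $\gamma$ is unambiguously a "left" or a "right" side. Once this convention has been pinned down, the remaining check on each configuration is elementary planar geometry, and the corollary follows at once from Theorem~\ref{theorem.1}.
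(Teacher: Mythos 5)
Your overall strategy is exactly the one the paper intends (and essentially leaves to the reader): invoke Theorem~\ref{theorem.1}, use the density of irrational-slope geodesics to reduce the verification of Assumption~\ref{geom} to finitely many closed rational geodesics, and then inspect each figure. That reduction is precisely the content of the remark following the figure, so there is no methodological divergence from the paper.

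However, the case analysis — which is the entire mathematical content of this corollary — contains two concrete misidentifications. For~\ref{fig.1}.c, there is \emph{no} closed geodesic that avoids both the interiors and the sides of the polygons: every vertical line with $x\in(-1,-\tfrac12)$ crosses the strip and every one with $x\in(-\tfrac12,1)$ crosses the triangle, while $x=\pm1$ follows the triangle on its left and the strip on its right (hence is fine). The unique violating geodesic is $x=-\tfrac12$: it \emph{does} follow the right side of the strip (which lies to its left), and fails only because on its right it meets the triangle solely at the vertex $(-\tfrac12,0)$. This is the "one-sided" failure mode, and it is the mechanism you instead attributed to~\ref{fig.1}.e. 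Conversely, in~\ref{fig.1}.e the diagonal geodesic does not graze any side at all: all the unit squares have lower-left corners $(a,b)$ with $b-a$ odd, so the line $y=x$ meets each square it encounters only at a single corner point (the same failure mode as~\ref{fig.1}.b), not along a side lying consistently on one flank. Both conclusions (no stabilisation for 1.c and 1.e) are still correct, but the witnesses you describe for 1.c do not exist, and the left/right bookkeeping you single out as the delicate point in 1.e is moot there. The paper's own classification of these failure modes appears explicitly in the proof of Proposition~\ref{equiv}: avoidance, corner-only contact (1.b, and in fact 1.e), and one-sided contact (1.c).
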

      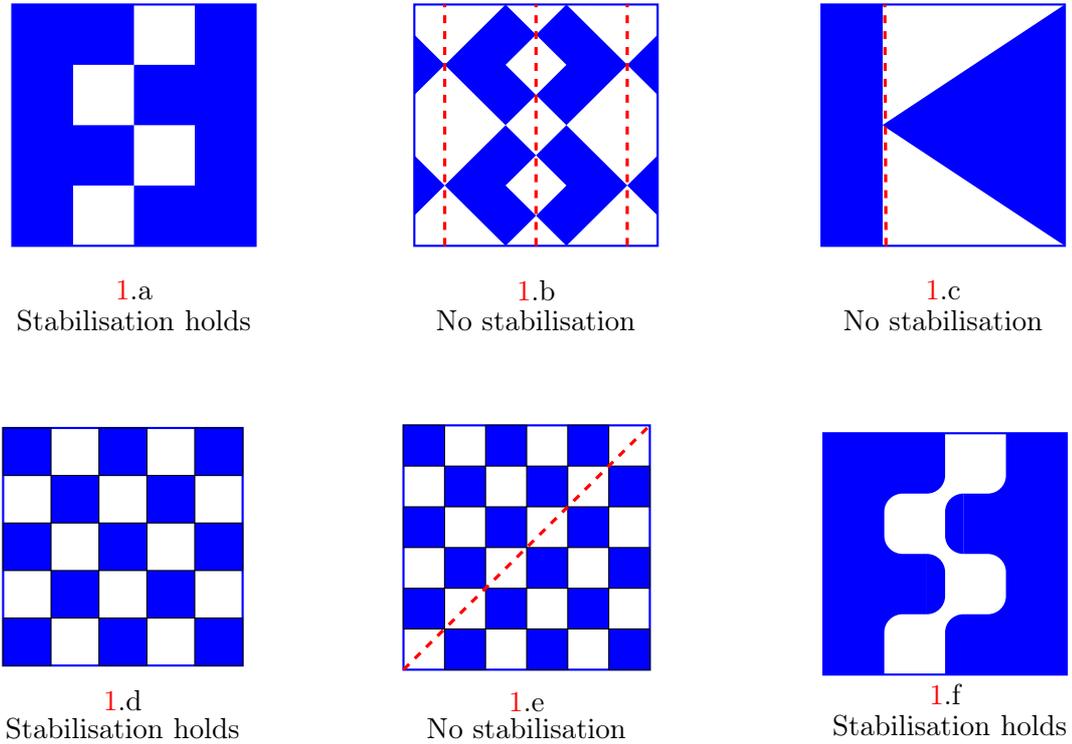
\begin{figure}[h]\label{fig.1}
 \begin{center}
\begin{tikzpicture}[scale=1.6]
\fill[blue] (-1,-1) -- (-1,1) -- (0,1) -- (0,0.5) -- (-0.5,0.5)-- (-0.5,0)-- (0,0) -- (0,-0.5)-- (-0.5,-0.5)-- (-0.5,-1)-- cycle;
\fill[blue] (1,-1) -- (1,1) -- (0.5,1) -- (0.5,0.5) -- (0,0.5) -- (0,0) -- (0.5,0)-- (0.5,-0.5) -- (0,-0.5)-- (0,-1) --(-0.5,-1)-- cycle;
\draw[thick, blue] (-1,-1) -- (-1,1) -- (1,1)--(1,-1)--cycle;
\draw (0, -1.2) node[below] {\protect{\ref{fig.1}}.a};
\draw (0, -1.45) node[below] {Stabilisation holds};
\end{tikzpicture}\hskip 2cm 
\begin{tikzpicture}[scale=1.6]
\fill[blue] (-.75,.5) -- (-.25,1) -- (0,.75) -- (-.25,0.5) -- (0,.25) -- (-.25,0) -- (0,-.25) -- (-.25,-.5)-- (0,-.75) --(-.25,-1)--(-.75,-.5)--(-.25,0)-- cycle;
\fill[blue] (.75,.5) -- (.25,1) -- (0,.75) -- (.25,0.5) -- (0,.25) -- (.25,0) -- (0,-.25) -- (.25,-.5)-- (0,-.75) --(.25,-1)--(.75,-.5)--(.25,0)-- cycle;
\fill[blue] (-.75,.5) -- (-1,.75) -- (-1,.25) -- cycle; 
\fill[blue] (.75,.5) -- (1,.75) -- (1,.25) -- cycle; 
\fill[blue] (-.75,-.5) -- (-1,-.75) -- (-1,-.25) -- cycle; 
\fill[blue] (.75,-.5) -- (1,-.75) -- (1,-.25) -- cycle; 
\draw[thick, blue] (-1,-1) -- (-1,1) -- (1,1)--(1,-1)--cycle;
\draw (0, -1.2) node[below] {\protect{\ref{fig.1}}.b};
\draw[very thick, red, dashed](-.75,-1) -- (-.75,1);
\draw[very thick, red, dashed](.75,-1) -- (.75,1);
\draw[very thick, red, dashed](0,-1) -- (0,1);
\draw (0, -1.45) node[below] {No stabilisation};
\end{tikzpicture}
\hskip 2cm 
\begin{tikzpicture}[scale=1.6]
\fill[blue] (-1,1) -- (-.5,1) -- (-.5,-1) -- (-1,-1) --  cycle;
\fill[blue] (1,1) -- (1,-1) -- (-.5,0) -- cycle;
\draw[thick, blue] (-1,-1) -- (-1,1) -- (1,1)--(1,-1)--cycle;
\draw[very thick, red, dashed](-.47,-1) -- (-.48,1);
\draw (0, -1.2) node[below] {\protect{\ref{fig.1}}.c};
\draw (0, -1.45) node[below] {No stabilisation};
\end{tikzpicture}
\end{center}
\vskip 1cm 
 \begin{center}
\begin{tikzpicture}[scale=.63]
\draw[thick, blue] (-3,-2) -- (2,-2) -- (2,3)--(-3,3)--cycle;
\drawcarre{-3}{2}{blue};
\drawcarre{-1}{2}{blue};
\drawcarre{-2}{1}{blue};
\drawcarre{0}{1}{blue};
\drawcarre{1}{2}{blue};
\drawcarre{-3}{0}{blue};
\drawcarre{-1}{0}{blue};
\drawcarre{1}{0}{blue};
\drawcarre{-2}{-1}{blue};
\drawcarre{0}{-1}{blue};
\drawcarre{-3}{-2}{blue};
\drawcarre{-1}{-2}{blue};
\drawcarre{1}{-2}{blue};
\draw (-.5, -2.3) node[below] {\protect{\ref{fig.1}}.d};
\draw (-.5, -2.9) node[below] {Stabilisation holds};
\end{tikzpicture}\hskip 2cm 
\begin{tikzpicture}[scale=.54]
\draw[thick, blue] (-3,-3) -- (3,-3) -- (3,3)--(-3,3)--cycle;
\drawcarre{-3}{2}{blue};
\drawcarre{-1}{2}{blue};
\drawcarre{1}{2}{blue};
\drawcarre{-2}{1}{blue};
\drawcarre{0}{1}{blue};
\drawcarre{2}{1}{blue};
\drawcarre{-3}{0}{blue};
\drawcarre{-1}{0}{blue};
\drawcarre{1}{0}{blue};
\drawcarre{-2}{-1}{blue};
\drawcarre{0}{-1}{blue};
\drawcarre{2}{-1}{blue};
\drawcarre{-3}{-2}{blue};
\drawcarre{-1}{-2}{blue};
\drawcarre{1}{-2}{blue};
\drawcarre{-2}{-3}{blue};
\drawcarre{0}{-3}{blue};
\drawcarre{2}{-3}{blue};
\draw[very thick, red, dashed](-3,-3) -- (3,3);
\draw (0, -3.3) node[below] {\protect{\ref{fig.1}}.e};
\draw (0, -3.95) node[below] {No stabilisation};
\end{tikzpicture} \hskip 2cm 
\raise 0.05cm \hbox{\begin{tikzpicture}[scale=1.6]
\fill[blue] (-1,-1) -- (-1,1) -- (0,1) -- (0,0.65) arc(0:-90:0.15) -- (-0.35,0.5) arc(90:180:0.15)--( -0.5, 0.15) arc(180:270:0.15)--(-0.15,0)-- (-0.15,-0.5)-- (-0.35,-0.5) arc(90:180:0.15) -- (-0.5,-1)-- cycle;
\fill[blue] (-0.15,0) arc(90:0:0.15) --(0,-0.35) arc(0:-90:0.15) -- cycle;
\fill[blue] (1,-1) -- (1,1) -- (0.5,1) -- (0.5,0.65) arc(0:-90:0.15)-- (0.15,0.5) -- (0.15,0) -- (0.35,0) arc(90:0:0.15)-- (0.5,-0.35) arc(0:-90:0.15) -- (0.15,-0.5) arc(90:180:0.15)-- (0,-1) --(-0.5,-1)-- cycle;
\fill[blue] (0.15,0.5) arc(90:180:0.15) --(0,0.15) arc(180:270:0.15) -- cycle;
\draw[thick, blue] (-1,-1) -- (-1,1) -- (1,1)--(1,-1)--cycle;
\draw (0, -1) node[below] {\protect{\ref{fig.1}}.f};
\draw (0, -1.25) node[below] {{ Stabilisation holds}};
\end{tikzpicture}}
\end{center}
\caption{Checkerboards: the damping 
$a$ is equal to $1$ in the blue region, $0$ elsewhere. For all these examples~\eqref{wgcc} is satisfied but not ~\eqref{sgcc}. The red dashed lines are geodesics which violate Assumption~\ref{geom}}
\end{figure}
{\begin{rema}
In assumption~\ref{geom}, as soon as the damping is non trivial (i.e. we have at least one polygon), all non closed geodesics will enter the interior of this polygon (because any non closed geodesics is dense in the torus). As a consequence, the second part of the assumption has to be checked only for closed geodesics. Actually, closed geodesics corresponding to directions $(\xi, \eta) = \frac{(p,q) }{ \sqrt{ p^2 + q^2}}$, $p \wedge q =1$, will also enter the polygon as soon as $p^2 + q^2$ is large enough. As a consequence, the second part of Assumption~\ref{geom} has to be checked only for a {\em finite number} of closed geodesics
\end{rema}
\begin{rema}
As pointed out by a referee, our proof actually gives a sufficient condition for stabilisation in a more general setting where the $R_j$ need not be polygons, but are open subsets and we assume that all but a finite number of closed geodesics are damped (in the sense that they enter the interior of one of the $R_{j}$'s) and the remaining closed geodesics satisfy the left/right property on intervals of positive measure (which implies that the boundary of the  open sets $R_{j_1}$ and $R_{j_2}$have some flat parts). As a consequence, stabilisation holds for  Figure \ref{fig.1}.f.
\end{rema} }

\begin{rema}
Stabilisation implies that exact controlability holds for some finite $T>0$. However our proof relies on a contradiction argument and resolvent estimates. It gives no geometric interpretation for this controlability time. This is this contradiction argument which allows us on tori to avoid a particularly delicate regime at the edge of the uncertainty principle (see Section~\ref{sec.2.2}). Giving a geometric interpretation of the time necessary for control would require dealing with this regime (see~\cite{Bu19}).
\end{rema}
 The plan of the paper is the following: In Section~\ref{sec.2}, we focus on the model case of the left checkerboard in Figure~\ref{fig.1}. We first reduce the question of uniform stabilisation to the proof of an observation estimate for high frequency solutions of Helmholtz equations.  We proceed by contradiction and construct good quasi-modes, for the study of which we perform a micro-localization which shows that the only obstruction is the vertical geodesic in the middle of the board. Then we prove a non concentration estimate which shows that solutions of Helmholtz equations (quasi-modes) cannot concentrate too fast on this trajectory. This is essentially the only point in the proof which is specific to the torus and it relies on the special geometric structure of the torus which was previously used in the context of Schr\"odinger equations~\cite{BuZw03, BuZw03-1, Ma10, BuZw12, AnMa14, BBZ13} and also for wave equations~\cite{BuHi05, AL14}.  Finally, by means of a second micro-localization with respect to this vertical geodesic, we obtain a contradiction.
In Section~\ref{sec.3}, we show how the general case can be reduced to this model case. Finally, in the last section we introduce a generalized version of~\eqref{gcc} that makes sense for $a\in L^\infty$ and which is equivalent to Assumption~\ref{geom} in our particular case. We prove that this generalized geometric control condition is always necessary (on any Riemannian manifold and for any damping $0\leq a\in L^\infty$) and we conjecture that it is always sufficient. For the convenience of the reader, we gathered in an appendix a few quite classical resultsabout the link between resolvent estimates and stabilisation.

The {\em second micro-localization} procedure has a well established history starting with the works by Laurent~\cite{La79, La85}, Kashiwara-Kawai~\cite{KaKa80},  Sj\"ostrand~\cite{Sj82}, Lebeau~\cite{Le85} in the analytic context, (see also Bony-Lerner~\cite{BoLe89} in the $C^\infty$ framework and Sj\"ostrand-Zworski~\cite{SjZw99} in the semi-classical setting) and in the framework of defect measures by Fermanian--Kammerer~\cite{FK00}, Miller~\cite{Mi96, Mi97, Mi00}, Nier~\cite{Ni96}, Fermanian--Kammerer-G\'erard~\cite{FKGe02, FKGe03, FKGe04}. Notice that most of these previous works in the framework of measures dealt with lagrangian or involutive  sub-manifolds, and it is worth comparing our contribution with these previous works, in particular~\cite{Ni96, AnMa14}. Here we are interested in the wave equation while the authors in~\cite{Ni96, AnMa14} were interested in the Schr\"odinger equation, and (compared to~\cite{AnMa14}) we are dealing with  worse quasi-modes ($o(h)$ instead of $o(h^2)$).  Another difference is that we perform a second microlocalization along a symplectic submanifold (namely $\{ (x=0,y, \xi =0, \eta) \in T^* \mathbb{T}^2\}$), while they consider an isotropic submanifold $\{ x=0\}$ in ~\cite{Ni96} or $\{ (x' , x'', \xi'  =0,\xi'')  \in T^* \mathbb{T}^d\}$ in~\cite{AnMa14}. {An exception is the note by Fermanian--Kammerer~\cite{FK05}, to which our construction is very close.} On the other hand, a feature shared by the present work and~\cite{Ni96, AnMa14} is that in all cases the analysis requires to work at the edges of uncertainty principle and use refinements of some exotic Weyl-H\"ormander  classes ($S^{1,1}$ in~\cite{Ni96}, $S^{0,0}$ in ~\cite{AnMa14} and $ S^{1/2, 1/2}$ in the present work), see~\cite{Ho85} and L{\'e}autaud-Lerner~\cite{LeLe14} for related work. {Another worthwhile comparison is with the series of works by Burq-Hitrik~\cite{BuHi05} and Anantharaman-Leautaud~\cite{AL14} on the damped wave equation on the torus when the control domain is arbitrary (in this case (WGCC) is in general not satisfied). However, though both works use some kind of second microlocalisation and deal with the wave equation, in~\cite{BuHi05, AL14} the approaches use Schr\"odinger equations methods (strong quasi-modes) transposed to get wave equations result and consequently leads to  much weaker results (polynomial decay v.s. exponential decay) under much weaker assumptions (arbitrary open sets).}

 \medskip\noindent \textbf{Acknowledgements.}
This research was partially supported by 
Agence Nationale de la Recherche through project ANA\'E ANR-13-BS01-0010-03
(NB \& PG)
{
\section{First micro-localization, proof of Theorem~\ref{strong-weak}}\label{sec.2.1}
In this section we work on an arbitrary compact manifold $M$ with an arbitrary damping function $a\in L^\infty(M)$ and outline the classical propagation arguments  which show that (SGCC) is sufficient for stabilisation while (WGCC) is necessary.
 Let us assume (SGCC) holds. According to Proposition~\ref{prop.B.2}, we need to prove~\eqref{obs}
 \begin{equation*}\tag{\protect{\ref{obs}}}
   \begin{gathered}
 \exists h_0>0; \forall 0<h<h_0, \forall  (u,f) \in H^2(M)\times L^2(M), (h^2 \Delta + 1) u =f,  \\
 \| u\|_{L^2(M)} \leq C \bigl( \| a^{1/2} u\|_{L^2} + \frac {1}{h} \| f\|_{L^2}\Bigr).
 \end{gathered}
 \end{equation*}
  To prove this estimate we argue by contradiction and obtain sequences $(h_n)\rightarrow 0$, and $(u_n, f_n)$ such that 
 $$(h_n^2\Delta +1) u_n = f_n, \quad \| u_n \|_{L^2}=1, \| a^{1/2} u_n \|_{L^2} = o(1)_{n\rightarrow + \infty}, \| f_n \|_{L^2} = o(h_n)_{n\rightarrow + \infty}.$$
 Extracting a subsequence, we can assume that the sequence $(u_n)$ has a semi-classical measure $\nu$ on $T^*\mathbb{T}^2$.
 For $q\in C^\infty_0 ( T^*M)$, we define in $Op_h (q)$ by the following procedure. Using partition of unity, we can assume that $q$ is supported in a local chart. Then, in this chart, we define
  \begin{equation}
 Op_h (q) (u) = \frac 1 {(2\pi h)^d} \int e^{\frac i h (x-y) \cdot \xi} q(x, \xi ) \zeta (y) u(y) dy d\xi,
 \end{equation}
 where $\zeta=1$ in a neighborhoud of the support of $q$ (remark that modulo smoothing $O(h^\infty)$ errors, this quantisation does not depend on the choice of the cut-off $\zeta$). 
 Then a semi-classical measure for the sequence $(u_n)$ satisfies
 $$ \lim_{n\rightarrow+ \infty} \Bigl(Op_{h_n}(q)u_n, u_n \Bigr)_{L^2(M)} = \langle \nu, q \rangle.
 $$
   In our case, it is supported in the c{h}aracteristic set 
 $$ \{ (X, \Xi) \in S^*\mathbb{T}^2; \| \Xi \|^2 =1\}.$$ 
 Furthermore, this measure has total mass $1$ and is invariant by the bic{h}aracteristic flow:
 $$ \Xi \cdot \nabla_X \nu =0.$$
 We refer to~\cite[Section 3]{Bu02} for a proof of these results in a very similar context.
 Let 
 $$S = \{ x\in M; \exists \delta >0, c>0; a \geq c \text{ on } B(x, \delta)\}.$$
Since $\| a^{1/2}u_n \| _{L^2} = o(1)_{n\rightarrow + \infty}$ we get that the measure $\nu$ vanishes above every point in $S$. The assumption (SGCC) ensures that every bicharacteristic contains at least one point in $S$. Hence $\nu$ is identically $0$ which contradicts the fact that it has total mass $1$! 
    }
    
{ Let us now assume that (WGCC) is not satisfied and prove that stabilisation does not hold. We actually prove the more precise result, which according the equivalence of properties (1) to (4) above (see Appendix~\ref{app:C}) implies that stabilisation does not hold
\begin{prop}
Let $T>0$. Consider a geodesic $\gamma$ of length $T$ which does not encounter the support of the damping function $a$.  Then there exists a sequence $(u_n)$ of solution to the wave equation~\eqref{eq.nonamortie} which satisfies 
\begin{equation} \lim_{n\rightarrow + \infty}E_m(u_n) =1, \qquad \lim_{n\rightarrow + \infty} \int_0^T \int_{M} a(x) |\partial_t u|^2 (t,x) dx dt =0.
\end{equation}
\end{prop}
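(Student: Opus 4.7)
The plan is to concentrate a sequence of solutions of the undamped wave equation~\eqref{eq.nonamortie} along $\gamma$ and use propagation of semi-classical defect measures to kill the damping integral while keeping the energy bounded below.

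\emph{Reduction to a smooth cutoff.} Since $\text{supp}(a)$ is closed and $\gamma([0,T])$ is compact and disjoint from it, there exists $\varepsilon>0$ with $\dist(\gamma([0,T]),\text{supp}(a))\geq 2\varepsilon$. I would pick $\chi\in C^\infty(M;[0,1])$ equal to $1$ on $\text{supp}(a)$ and vanishing on the $\varepsilon$-tube around $\gamma([0,T])$. Because $a\in L^\infty$ vanishes almost everywhere outside its support, $0\leq a(x)\leq \|a\|_{L^\infty}\chi(x)$ almost everywhere, so the proposition reduces to producing $u_n$ solving~\eqref{eq.nonamortie} with $E_m(u_n)(0)\to 1$ and
\begin{equation*}
\int_0^T\!\!\int_M \chi(x)\,|\partial_t u_n|^2\,d\mathrm{vol}_g\,dt\longrightarrow 0.
\end{equation*}

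\emph{Construction.} Let $\rho_0=(x_0,\xi_0)\in S^*M$ be such that $\gamma$ is the projection of the bicharacteristic issued from $\rho_0$, and let $h_n\to 0^+$. In a chart around $x_0$, I would take standard Gaussian wavepackets
\begin{equation*}
v_0^n(x)= h_n^{1-d/4}\,\varphi\!\left(\frac{x-x_0}{\sqrt{h_n}}\right) e^{i\langle x-x_0,\xi_0\rangle/h_n},\qquad v_1^n=-\frac{i}{h_n}v_0^n,
\end{equation*}
with $\varphi$ a fixed Schwartz function, truncated by a small fixed cutoff. The coupling $v_1^n=-ih_n^{-1}v_0^n$ is chosen so that $(v_0^n,v_1^n)$ microlocalizes onto a single characteristic sheet of the semi-classical symbol of $\partial_t^2-\Delta+m$, the one that carries energy along the forward bicharacteristic issued from $\rho_0$. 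After renormalization assume $E_m(v_0^n,v_1^n)\to 1$, and let $u_n$ be the solution of~\eqref{eq.nonamortie} with these data. By classical propagation arguments for the wave equation (as recalled in Section~\ref{sec.2.1} above), the sequence $(\partial_tu_n)_n$, viewed as an $L^2([0,T]\times M)$ sequence, admits a semi-classical defect measure $\nu$ which is the image of $\delta_{\rho_0}\otimes dt$ under the bicharacteristic flow; in particular $\nu$ is supported in $\{(t,\gamma(t),\dot\gamma(t)):0\leq t\leq T\}$.

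\emph{Conclusion and main obstacle.} Testing $\nu$ against the $\xi$-independent symbol $\chi(x)$ yields
\begin{equation*}
\lim_{n\to\infty}\int_0^T\!\!\int_M \chi(x)\,|\partial_t u_n|^2\,d\mathrm{vol}_g\,dt = \int_0^T \chi(\gamma(t))\,dt=0,
\end{equation*}
since $\chi$ vanishes on $\gamma$. Combined with energy conservation $E_m(u_n)(t)=E_m(u_n)(0)\to 1$, this proves the proposition. The only delicate point, though entirely classical, is the verification that the ansatz above does microlocalize on a single sheet of the characteristic variety, so that the defect measure is transported forward along $\gamma$ and not along the time-reversed geodesic; this is handled by an elementary semi-classical computation showing $(h_nD_t+\sqrt{-h_n^2\Delta+h_n^2m})(v_0^n,v_1^n)\to 0$ in $L^2$, which forces $\nu$ to live on the desired sheet $\tau=-|\xi|$.
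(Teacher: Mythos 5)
Your proof is correct but takes a genuinely different route from the paper's. The paper invokes Ralston's Gaussian-beam construction (Proposition~\ref{ralston}): it builds {\em approximate} solutions to the $m=0$ wave equation that are exponentially localised in a $\sqrt{h_n}$-tube around $\gamma$, so that the separation $\dist(\gamma,\supp a)\geq\delta$ gives the sharp bound $\int_0^T\!\int_M a\,|\partial_t v_n|^2 = O(e^{-c\delta^2/h_n})$, and then corrects by Duhamel (twice: once to put back the mass term $m$, once implicitly to pass from approximate to exact solutions) using $\|v_n\|_{L^2}=O(h_n)$. You instead take coherent-state data $(v_0^n,v_1^n)$ with the polarisation $v_1^n=-\tfrac{i}{h_n}v_0^n$ to select one sheet of the characteristic variety, evolve {\em exactly}, and then argue by propagation of the semi-classical defect measure of $\partial_t u_n$: since the measure is carried by the single forward bicharacteristic over $\gamma$ and your cutoff $\chi$ (which dominates $a/\|a\|_\infty$ a.e.\ by definition of the distributional support) vanishes identically in a neighbourhood of $\gamma$, the damping integral tends to zero. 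This is softer — you get $o(1)$ rather than the exponential decay the paper obtains — but it is cleaner for the statement at hand, and it avoids the $m=0$ detour entirely since nothing in the measure argument cares whether $m$ is present. The trade-off is that the paper's quantitative Ralston estimates are exactly what is reused for Theorem~\ref{theorem.4} (necessity of \eqref{ggcc}), where one must track how fast the mass decays as a function of $\epsilon_n^2/h_n$, and your soft argument would not suffice there. Two points deserve to be spelled out more carefully in your write-up: (i) testing the defect measure against the merely bounded, non-decaying symbol $\chi(x)$ is legitimate because the data are $h_n$-oscillating and the measure is carried by the compact set $\{|\xi|^2=1\}$, so there is no escape of mass to $\xi$-infinity; (ii) the single-sheet microlocalisation is not a nicety but essential, because the hypothesis only says that the {\em forward} geodesic segment $\gamma([0,T])$ avoids $\supp a$ — the backward branch might hit it, and without selecting one sheet the measure would be carried by both.
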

}
{
First by compactness, there exists $\delta >0$ such that 
$$ \text{dist} ( \gamma ([- \delta, T+\delta]), \text{supp} (a) ) \geq \delta.
$$ 
Then, according to Proposition~\ref{ralston}, there exists  a sequence of approximate solutions $(v_n)$ to the wave equation (with $m=0$)  which is exponentially localised on the geodesic $\gamma$ and satisfies~\eqref{approximate} and~\eqref{upper}. 
From~\eqref{upper}, we deduce that 
\begin{equation}\label{decaybis}
 \| v_n\|_{L^2(M)}  = O(h_n),
 \end{equation}
and 
from the exponential localisation on the geodesic $\gamma$ (and the $\delta$ separation with the support of $a$)
\begin{equation}
\label{decroissance}
 \int_0^T \int_M a(x) |\partial_t v_n |^2 (t,x) dx dt = O( e^{-c\frac{ \delta^2} { h_n}})
\end{equation} 
uniformly with respect to $t\in [-1, T+1]$. The solution $u_n$ to the wave equation (with $m$)~\eqref{eq.nonamortie} with the same initial data satisfies
$$ (\partial_t^2 - \Delta + m) (u_n - v_n) = - m v_n, \qquad, (u_n - v_n)\mid_{t=0} =0, \partial (u_n - v_n)\mid_{t=0} =0.$$
As a conseqnence from Duhamel formula and~\eqref{decaybis}, 
$$ \sup_{t\in [0,T]} \| u_n - v_n \|_{H^1(M)} ^2 + \| \partial_t u_n - \partial _t v_n \|_{L^2(M)} ^2 = O(h_n ^2).
$$ 
This implies according to~\eqref{decroissance}
$$ E_m (u_n) =1 + O(h_n ^2), \qquad  \int_0^T \int_M a(x) |\partial_t v_n |^2 (t,x)dx dt = O(h_n^2).
$$
}
 
 \section{The model case of a checkerboard}\label{sec.2}
 
In this section we prove Theorem~\ref{theorem.1} for  the following model on the two dimensional torus $\mathbb{T}^2= \mathbb{R}^2 / (2\mathbb{Z})^2$. We shall later microlocally reduce the general case to this model.
 \begin{figure}[h]
 \begin{center}
\begin{tikzpicture}[scale=1.6]
\fill[blue] (-1,-1) -- (-1,1) -- (0,1) -- (0,0.5) -- (-0.5,0.5)-- (-0.5, 0)-- (0,0)-- (0, -0.5)-- (-0.5,-0.5) --(-0.5,-1)-- cycle;
\fill[blue] (1,-1) -- (1,1) -- (0.5,1) -- (0.5,0.5)-- (0,0.5)-- (0,0)--(0.5,0) -- (0.5,-0.5) -- (0,-0.5) -- (0,-1) -- cycle;
\draw[thick, blue] (-1,-1) -- (-1,1) -- (1,1)--(1,-1)--cycle;
\draw[thick, ->] (-1.4,0) -- (1.4,0);
\draw[thick, ->] (0,-1.2) -- (0,1.4);
\draw (1.1,0) node[below] {$1$};
\draw (1.4,0) node[below] {$x$};
\draw (0,1.1) node[left] {$1$};
\draw (0,1.4) node[left] {$y$};
\end{tikzpicture}
\end{center}
\caption{The checkerboard: a microlocal model 
where the damping 
$a$ is equal to $1$ in the blue region, $0$ elsewhere}\label{fig.4}

\end{figure}
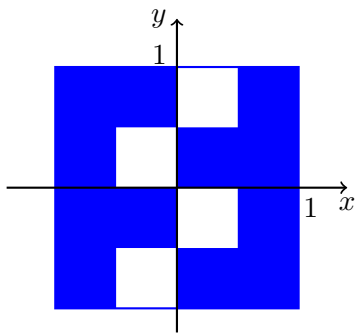
According to the results in Section~\ref{sec.2.1},  since the only two bicharacteristic{s} which do not enter the interior of the set where $a=1$ are 
$$\{ (x=0,\xi=0, \eta = \pm 1)\},$$
we know that {$\nu$} is supported on the union of these two bicharacteristics.

\subsection{A priori non concentration estimate}\label{sec.2.2}
In this section we show that $(u_n)$ cannot concentrate on too small neighbourhoods around $\{ x=0\}$. This is the key (and only) point where we use the particular structure of the torus as a product manifold.
 
  Let us recall that $\| (h_n^2 \Delta + 1) u_n\|_{L^2} = o(h_n)$.  Define 
  
  \begin{equation}
  \label{epsi}
   \epsilon (h_n) = \max \left ( h_n^{{1/6}}, \Bigl( \| (h_n^2\Delta +1) u_n\| / h_n \Bigr) ^{1/6}\right ),
  \end{equation}
   so that 
  \begin{equation}\label{borne}
   h_n^{-1} \epsilon^{-6} (h_n)\|  (h_n^2 \Delta+1) u_n\|_{L^2}\leq 1, \qquad  \lim_{n\rightarrow +\infty } \epsilon (h_n) =0.
   \end{equation}
    The purpose of this section is to prove the following non concentration result which is actually related to Kakeya-Nikodym bounds (see~\cite{So11, BlSo15, ChSoYa16}))
 \begin{prop}\label{aprioriprop}
 Assume that $\|u_n\|_{L^2} = \mathcal{O}(1)$, and ~\eqref{borne} holds. Then there exists $C>0$ such that 
 $$ \forall n \in \mathbb{N}, \| u_n\|_{L^2( \{ |x| \leq  h_n^{1/2} \epsilon ^{-2} (h_n) \} )} \leq C \epsilon^{1/2} (h_n). $$
 \end{prop}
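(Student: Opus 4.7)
The strategy exploits the product structure $\mathbb{T}^2 = \mathbb{T}_x\times\mathbb{T}_y$ via Fourier decomposition in the $y$-variable. Writing
$$u_n(x,y) = \sum_{k\in\mathbb{Z}} u_{n,k}(x)\, e^{i\pi k y},$$
each Fourier coefficient satisfies on $\mathbb{T}_x = \mathbb{R}/2\mathbb{Z}$ the 1D semi-classical ODE
$$h_n^2\, u_{n,k}''(x) + \lambda_{n,k}\, u_{n,k}(x) = f_{n,k}(x), \qquad \lambda_{n,k} := 1 - h_n^2\pi^2 k^2,$$
with, by Parseval and~\eqref{borne}, $\sum_k\|u_{n,k}\|_{L^2(\mathbb{T}_x)}^2 = O(1)$ and $\sum_k\|f_{n,k}\|_{L^2(\mathbb{T}_x)}^2 = O(h_n^2\epsilon_n^{12})$. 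The target estimate reduces to
$$\sum_k \int_{|x|\le\delta_n} |u_{n,k}(x)|^2\, dx \le C\,\epsilon_n, \qquad \delta_n := h_n^{1/2}\,\epsilon_n^{-2}.$$

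I fix the threshold $\eta_n := h_n\,\epsilon_n^6$ and split $\mathbb{Z}$ into an oscillatory non-resonant part $\mathcal{N}_n^+=\{k:\lambda_{n,k}\ge\eta_n\}$, an elliptic part $\mathcal{N}_n^-=\{k:\lambda_{n,k}\le-\eta_n\}$, and a near-resonant part $\mathcal{R}_n=\{k:|\lambda_{n,k}|<\eta_n\}$. For $k\in\mathcal{N}_n^+$, I apply a virial/Morawetz identity by multiplying the 1D ODE by $\phi(x)\overline{u_{n,k}'(x)}$ with $\phi$ a smooth odd function compactly supported in a fundamental domain of $\mathbb{T}_x$ such that $\phi'(x)\ge 0$, $\phi'\equiv 1$ on $\{|x|\le\delta_n\}$, and $\|\phi\|_{L^\infty}\le C\delta_n$. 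Integrating and taking real parts yields
$$\lambda_{n,k}\int\phi'(x)|u_{n,k}|^2\,dx + h_n^2\int\phi'(x)|u_{n,k}'|^2\,dx = -2\,\mathrm{Re}\int f_{n,k}\,\phi\,\overline{u_{n,k}'}\,dx.$$
Combined with the energy identity $h_n^2\|u_{n,k}'\|^2\le\lambda_{n,k}\|u_{n,k}\|^2+\|f_{n,k}\|\|u_{n,k}\|$ and the lower bound $\lambda_{n,k}\ge\eta_n$, Cauchy--Schwarz in $k$ produces a contribution of order $\delta_n\,h_n^{-1}\,\eta_n^{-1/2}\,\|f_n\|\,\|u_n\|\asymp\epsilon_n$. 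For $k\in\mathcal{N}_n^-$, the Fourier inversion formula $\widehat{u_{n,k}}(j) = \widehat{f_{n,k}}(j)/(\lambda_{n,k}-\pi^2 h_n^2 j^2)$ together with $|\lambda_{n,k}-\pi^2 h_n^2 j^2|\ge\eta_n+\pi^2 h_n^2 j^2$ give $\|u_{n,k}\|_{L^\infty}^2\lesssim\|f_{n,k}\|^2/(h_n\,\eta_n^{3/2})$; using $\int_{|x|\le\delta_n}|u_{n,k}|^2\le 2\delta_n\|u_{n,k}\|_{L^\infty}^2$ and summing produces a contribution of order $\epsilon_n$.

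The main obstacle is the near-resonant part $\mathcal{R}_n$, where the Morawetz identity is no longer coercive. Here I split the Fourier series $u_{n,k}(x)=\sum_j c_{j,k}\,e^{i\pi j x}$ at a cutoff $N\asymp\epsilon_n^3/\sqrt{h_n}$: for $|j|>N$ the denominator $|\lambda_{n,k}-\pi^2 h_n^2 j^2|\gtrsim h_n^2 j^2$ gives, via Cauchy--Schwarz in $j$, $\sum_{|j|>N}|c_{j,k}|\lesssim\|f_{n,k}\|/(h_n^2 N^{3/2})$; for $|j|\le N$, Cauchy--Schwarz in $j$ gives $\sum_{|j|\le N}|c_{j,k}|\le\sqrt{2N+1}\,\|u_{n,k}\|_{L^2}$. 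Combining, $\|u_{n,k}\|_{L^\infty}^2\lesssim N\,\|u_{n,k}\|_{L^2}^2 + \|f_{n,k}\|^2/(h_n^4\,N^3)$, so multiplying by $2\delta_n$ and summing over $\mathcal{R}_n$ yields, with the calibration $N=\epsilon_n^3/\sqrt{h_n}$, a bound $\lesssim \epsilon_n\sum_k\|u_{n,k}\|_{L^2}^2 + (h_n^2\epsilon_n^{11})^{-1}\sum_k\|f_{n,k}\|_{L^2}^2\lesssim\epsilon_n$. The delicate arithmetic identifying $\eta_n = h_n\epsilon_n^6$ and $N=\epsilon_n^3/\sqrt{h_n}$ as the balancing exponents, together with the lower bound $\epsilon_n\ge h_n^{1/6}$ from~\eqref{epsi} being exactly what keeps the three regimes mutually compatible, is the crux of the argument.
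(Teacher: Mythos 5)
Your Fourier decomposition in $y$ followed by the three-way split in $\lambda_{n,k}$ is a genuinely different route from the paper's. The paper does not split the $y$-modes into regimes: it takes Fourier transform in $y$, uses Minkowski to pass to $L^2_\eta L^\infty_x$, rescales $x=\beta h^{1/2}z$ with $\beta=\epsilon^{-3}$, and then proves a single 1D Lemma for $(\partial_z^2+\tau)v=k$, uniform in $\tau\in\mathbb{R}$, by integration by parts plus Gagliardo--Nirenberg in the elliptic regime $\tau\le -1$ and by an explicit variation-of-parameters (Green's kernel) formula in the hyperbolic regime $\tau\ge -1$. Your elliptic and near-resonant treatments, which pass through an $L^\infty_x$ bound on each $u_{n,k}$ and then exploit the $2\delta_n$-measure of the strip, are sound, and I have checked the arithmetic with $\eta_n=h_n\epsilon_n^6$, $N\asymp\epsilon_n^3/\sqrt{h_n}$: it closes.

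The oscillatory regime $\lambda_{n,k}\ge\eta_n$, however, has a real gap. The multiplier $\phi$ you describe cannot exist: a smooth function compactly supported in a fundamental domain of $\mathbb{T}_x$ has $\int\phi'=0$, which is incompatible with $\phi'\ge0$ together with $\phi'\equiv1$ near $0$. If one allows $\phi'$ to go negative on an annulus $\{\delta_n<|x|<2\delta_n\}$, the virial identity acquires extra terms $\lambda_{n,k}\int(\phi')_-|u_{n,k}|^2$ and $h_n^2\int(\phi')_-|u_{n,k}'|^2$ on the wrong side; after dividing by $\lambda_{n,k}$ and summing in $k$, the first of these alone contributes $O\bigl(\sum_k\|u_{n,k}\|_{L^2}^2\bigr)=O(1)$, not $O(\epsilon_n)$. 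The root cause is that the virial goes directly to $L^2$ on the thin strip, forgoing the factor $\delta_n$ from $\int_{|x|\le\delta_n}|u_{n,k}|^2\le 2\delta_n\|u_{n,k}\|_{L^\infty}^2$ that makes your other two regimes close. The natural repair is to also route this regime through an $L^\infty_x$ bound: since the pointwise energy $e_{n,k}(x)=|u_{n,k}'(x)|^2+h_n^{-2}\lambda_{n,k}|u_{n,k}(x)|^2$ satisfies $e_{n,k}'=2h_n^{-2}\Re(f_{n,k}\overline{u_{n,k}'})$, combining with the identity $h_n^2\|u_{n,k}'\|_{L^2}^2\le\lambda_{n,k}\|u_{n,k}\|_{L^2}^2+\|f_{n,k}\|_{L^2}\|u_{n,k}\|_{L^2}$ gives
$\|u_{n,k}\|_{L^\infty}^2\lesssim\|u_{n,k}\|_{L^2}^2+(h_n\sqrt{\lambda_{n,k}})^{-1}\|f_{n,k}\|_{L^2}\|u_{n,k}\|_{L^2}$;
multiplying by $2\delta_n$, summing over $k$ by Cauchy--Schwarz, and using $\delta_n\le\epsilon_n$ (which follows from $\epsilon_n\ge h_n^{1/6}$) recovers $O(\epsilon_n)$. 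After rescaling this is exactly the hyperbolic branch of the paper's 1D Lemma, so the fix brings your oscillatory case back in line with the paper's argument.
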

The proposition follows from the following one dimensional propagation estimate (see~\cite{BuZu15} for related estimates)
\begin{prop}\label{rescalled}
There exists $C>0, h_0$ such for any $0<h<h_0$,  $1\leq \beta\leq h^{-\frac 1 2}$, and any $(u,f)\in H^2 \times L^2$ solutions of 
$$ (h^2(\partial_x^2 + \partial_y^2)+1 ) u =f, $$
we have 
{\begin{multline}\label{eq.rescalled}
\| u\|_{L^\infty( \{ |x| \leq\beta h^{\frac 1 2}\}; L^2_y )}
\\
\leq C\beta^{- \frac 1 2} h^{-\frac 1 4}  \Bigl( \| u\|_{L^2( \{\beta h^{\frac 1 2} \leq  |x| \leq2 \beta h^{\frac 1 2}\}; L^2_y )} + h^{-1} \beta^2  \| f\|_{L^2( \{ |x| \leq2 \beta h^{\frac 1 2}\}; L^2_y )} \Bigr ).
\end{multline}}
\end{prop}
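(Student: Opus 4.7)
The plan is to diagonalise in $y$ by Fourier series, reducing the estimate to a family of one-dimensional Helmholtz estimates uniform in the spectral parameter, and then to analyse the resulting 1D problem by splitting into oscillatory and elliptic regimes.

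Expanding $u(x,y) = \sum_k u_k(x)\phi_k(y)$ in an orthonormal basis of eigenfunctions of $-\partial_y^2$ (with eigenvalues $\lambda_k^2$), the equation $(h^2(\partial_x^2 + \partial_y^2) + 1)u = f$ decouples into
$$ h^2 u_k''(x) + \omega_k^2 u_k(x) = f_k(x), \qquad \omega_k^2 := 1 - h^2 \lambda_k^2 \in (-\infty, 1]. $$
Plancherel in $y$ reduces the proposition to the fibrewise estimate
$$ \sup_{|x|\leq r} |v(x)|^2 \leq C r^{-1} \|v\|_{L^2(r<|x|<2r)}^2 + C r^3 h^{-4}\|g\|_{L^2(|x|<2r)}^2, \qquad r := \beta h^{1/2}, $$
for $h^2 v'' + \omega^2 v = g$, with constant $C$ uniform in $\omega^2 \in \mathbb{R}$. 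The further rescaling $x = rY$, $W(Y) := v(rY)$, $G(Y) := (r^2/h^2)g(rY)$, and $\Omega^2 := \omega^2 \beta^2/h$ converts this to the scale-invariant bound
$$ \|W\|_{L^\infty(|Y|<1)}^2 \leq C \|W\|_{L^2(1<|Y|<2)}^2 + C \|G\|_{L^2(|Y|<2)}^2 $$
for $W'' + \Omega^2 W = G$ on $(-2, 2)$, uniformly in $\Omega^2 \in \mathbb{R}$.

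To establish this 1D inequality, I would split into three regimes. For $|\Omega| \leq \Omega_0$ bounded, I would use standard elliptic interior regularity combined with a Caccioppoli-type propagation from the annulus into $(-1, 1)$; the resulting constant depends continuously on $\Omega$, and is finite because no nontrivial homogeneous solution vanishes on $[1,2]\cup[-2,-1]$ (uniqueness of the Cauchy problem), so a compactness argument yields uniformity. For $\Omega^2 \geq \Omega_0^2$ (oscillatory regime), the natural energy $E(Y) := |W'(Y)|^2 + \Omega^2 |W(Y)|^2$ has derivative $E'(Y) = 2\,\text{Re}(G(Y)\overline{W'(Y)})$, hence $E$ varies slowly with $\|G\|_{L^2}$; picking $Y_0 \in [3/2, 2]$ where $E(Y_0)$ is bounded by the annular average of $E$, and bounding $\int_{\text{ann}} |W'|^2$ by $\Omega^2 \int_{\text{ann}} |W|^2 + C\|G\|_{L^2}^2$ via a standard Caccioppoli estimate, propagates the $L^\infty$ bound into $|Y|<1$. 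For $\Omega^2 \leq -\Omega_0^2$ (elliptic regime), with $\mu := \sqrt{-\Omega^2}$, factor $(\partial_Y - \mu)(\partial_Y + \mu)W = G$ and introduce $V_\pm := W' \pm \mu W$, which solve the first-order equations $V_\pm' \mp \mu V_\pm = G$; propagate $V_+$ leftward from $[1, 2]$ and $V_-$ rightward from $[-2, -1]$ using the contractive kernel $e^{-\mu|\cdot|}$, then reconstruct $W = (V_+ - V_-)/(2\mu)$, the factor $\mu^2$ in the denominator absorbing the propagation losses.

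The main obstacle is to maintain uniformity through the transition regime $|\Omega|\sim 1$: this is handled by choosing $\Omega_0$ large enough for the energy and factorisation arguments to yield sharp constants when $|\Omega|\geq\Omega_0$, and by dispatching the bounded-$\Omega$ range via the compactness-continuity argument. Summing the fibrewise bound by Plancherel in $y$ then yields the full two-dimensional estimate.
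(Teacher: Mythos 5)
Your proposal is correct and reaches the same reduction as the paper --- diagonalising in $y$ (by Plancherel; the paper uses the Fourier transform, you use Fourier series, which makes no difference), then rescaling $x$ to a fixed interval $(-2,2)$ so that the statement becomes a one-dimensional, scale-invariant observation estimate for $W''+\Omega^2W=G$ uniform in $\Omega^2\in\mathbb{R}$ --- but you prove that 1D estimate by a genuinely different method. The paper introduces the cutoff $u=\chi v$ and treats two regimes: for $\tau\le -1$ it multiplies by $u$, integrates by parts and closes with the 1D Gagliardo--Nirenberg inequality $\|u\|_{L^\infty}\le C\|u\|_{L^2}^{1/2}\|\partial_z u\|_{L^2}^{1/2}$; for $\tau\ge -1$ it writes the explicit Duhamel / variation-of-constants formula for $(\partial_z^2+\tau)$ and exploits the oscillatory integral bound $\bigl|\int_z^x e^{i\sigma(2y-x-z)}\,dy\bigr|\le C/(1+|\sigma|)$, which is why the paper can keep the sharper factor $1/\sqrt{1+|\tau|}$ in front of $\|k\|_{L^1}$. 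Your version splits into three regimes (bounded $|\Omega|$ by compactness/Gronwall, large oscillatory $\Omega$ by energy monotonicity $E'=2\mathrm{Re}(G\overline{W'})$, large elliptic $|\Omega|$ by the first-order factorisation $V_\pm=W'\pm\mu W$) and works throughout with $\|G\|_{L^2}$ rather than $\|G\|_{L^1}$, so it is slightly less sharp in the forcing term, but that extra sharpness is unused --- after rescaling, both routes yield exactly the bound needed. A few implementation points you should make explicit: in the oscillatory case the energy increment $|E(Y)-E(Y_0)|\le 2\|G\|_{L^2}\|W'\|_{L^2}$ feeds $\|W'\|_{L^2}$ back into $E$, so you must close that loop (e.g.\ set $\Phi=\sup E$ and absorb via Young's inequality), and in the elliptic case the starting values $|V_\pm(Y_1)|$ involve $W'(Y_1)$, so the Caccioppoli bound $\int_{5/4}^{7/4}|W'|^2\lesssim(1+\mu^2)\|W\|_{L^2(1<|Y|<2)}^2+\mu^{-2}\|G\|_{L^2}^2$ is needed there as well, not only in the oscillatory regime. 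With those small completions your argument is sound and is a clean alternative to the paper's integration-by-parts/Duhamel proof of the key 1D lemma.
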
 
Let us first show that Proposition~\ref{aprioriprop} follows from Proposition~\ref{rescalled}. Indeed, choosing $\beta= \epsilon^{-3}(h)$, H\"older's inequality gives 
\begin{multline}
\| u\|_{L^2( \{ |x| \leq h^{\frac 1 2}\epsilon^{-2}(h)\} )}\leq 
h^{\frac 1 4} \epsilon^{-1} (h)  \| u\|_{L^\infty( \{ |x| \leq h^{\frac 1 2}\epsilon^{-3}(h\} )}\\
\leq C \epsilon^{\frac 1 2}(h)\Bigl( \| u\|_{L^2( \{ h^{\frac 1 2}\epsilon^{-3}(h) \leq  |x| \leq2 h^{\frac 1 2}\epsilon^{-3}(h)\} )} + h^{-1} \epsilon^{-6}(h)  \| f\|_{L^2( \{ |x| \leq2 \beta h^{\frac 1 2}\} )}\Bigr)\\
\leq C \epsilon ^{\frac 1 2}(h) \Bigl(\|u\|_{L^2}+ h^{-1} \epsilon^{-6} (h)\| f\|_{L^2}\Bigr)
\leq 2C \epsilon^{\frac 1 2} (h),
\end{multline}
where in the last inequality we used~\eqref{borne}. 

Now we can prove Proposition~\ref{rescalled}. Denote by $v$ (resp. $g$) the Fourier transform of $u$ (resp. $f$). For fixed $x$,
$$\| v(x,\cdot)\|_{L^2_\eta}= 2\pi \| u(x, \cdot)\|_{L^2_y}.$$
{ We deduce that~\eqref{eq.rescalled} is equivalent to
\begin{equation}\label{eq.rescalledbis}
\| v\|_{L^\infty( \{ |x| \leq\beta h^{\frac 1 2}\}; L^2_\eta )}
\leq C\beta^{- \frac 1 2} h^{-\frac 1 4}  \Bigl( \| v\|_{L^2( \{\beta h^{\frac 1 2} \leq  |x| \leq2 \beta h^{\frac 1 2}\}; L^2_\eta )} + h^{-1} \beta^2  \| f\|_{L^2( \{ |x| \leq2 \beta h^{\frac 1 2}\}; L^2_\eta )} \Bigr ).
\end{equation}
Now, by Minkovski inequality,
$$ \| v|\|_{L^\infty_x; L^2_\eta} \leq \| v\| _{L^2_\eta; L^\infty_x}$$}
and we deduce that~\eqref{eq.rescalledbis} is implied by 

\begin{prop}\label{rescalledter}
There exists $C>0, h_0$ such for any $0<h<h_0$,  $\eta\in \mathbb{R}$ $1\leq \beta\leq h^{-\frac 1 2}$, and any $(v,g)$ solutions of 
$$(h\beta^{-2} \partial_z^2 + 1- h^2\eta^2)2)v=g,$$
\begin{equation}\label{eq.rescalledter}
\| u\|_{L^\infty( \{ |x| \leq\beta h^{\frac 1 2}\})}
\leq C\beta^{- \frac 1 2} h^{-\frac 1 4}  \Bigl( \| u\|_{L^2( \{\beta h^{\frac 1 2} \leq  |x| \leq2 \beta h^{\frac 1 2}\})} + h^{-1} \beta^2  \| f\|_{L^2( \{ |x| \leq2 \beta h^{\frac 1 2}\})} \Bigr ).
\end{equation}
\end{prop}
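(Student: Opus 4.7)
My plan is to treat the statement as a scale-invariant one-dimensional ODE estimate in disguise. The first step is to make the rescaling that the factor $h\beta^{-2}$ suggests explicit: set $z = x/(\beta h^{1/2})$, $w(z) = v(\beta h^{1/2} z)$, $F(z) = g(\beta h^{1/2} z)$. Reading the hypothesis with its evident typographical errors corrected, the underlying one-dimensional equation is the $\eta$-fibre $(h^2\partial_x^2 + 1 - h^2\eta^2)v = g$ of the equation in Proposition~\ref{rescalled}. Under the rescaling it becomes $((h/\beta^2)\partial_z^2 + (1 - h^2\eta^2))w = F$, and multiplying through by $\beta^2/h$ yields $(\partial_z^2 + \omega) w = (\beta^2/h) F$ with $\omega := (1-h^2\eta^2)\beta^2/h$; as $h, \beta, \eta$ range over the admissible values, $\omega$ sweeps all of $\mathbb{R}$. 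A direct bookkeeping of the $L^p$ rescalings shows that the prefactor $\beta^{-1/2}h^{-1/4}$ is exactly absorbed, and the desired inequality reduces to the scale-invariant statement
\begin{equation*}
\|w\|_{L^\infty(|z|\leq 1)} \leq C\bigl(\|w\|_{L^2(1\leq|z|\leq 2)} + \|(\partial_z^2+\omega)w\|_{L^2(|z|\leq 2)}\bigr),
\end{equation*}
with $C$ independent of $\omega\in\mathbb{R}$.

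Second, I will decompose $w|_{[-2,2]} = w_h + w_p$ where $w_h$ solves the homogeneous ODE and $w_p$ is a particular solution. For $w_p$, I will use Duhamel $w_p(z) = \omega^{-1/2}\int_{-2}^z \sin(\sqrt\omega(z-s)) F(s)\,ds$ when $\omega \geq 0$ (interpreted as $\int_{-2}^z(z-s)F(s)\,ds$ at $\omega=0$), and convolution with the decaying Green's function $G(z,s) = -e^{-\mu|z-s|}/(2\mu)$ when $\omega = -\mu^2 < 0$. Both constructions yield $\|w_p\|_{L^\infty([-2,2])} \leq C\|F\|_{L^2([-2,2])}$ uniformly in $\omega$, so the remaining task is to control $\|w_h\|_{L^\infty(|z|\leq 1)}$ by $\|w_h\|_{L^2(1\leq|z|\leq 2)}$.

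Third, I will expand $w_h = A\phi_1 + B\phi_2$ in the canonical basis defined by $\phi_1(0)=1,\,\phi_1'(0)=0$ and $\phi_2(0)=0,\,\phi_2'(0)=1$; since $\phi_1$ is then even and $\phi_2$ odd in $z$, the cross term vanishes on the symmetric annulus:
\begin{equation*}
\|w_h\|_{L^2(1\leq|z|\leq 2)}^2 = 2A^2\|\phi_1\|_{L^2(1,2)}^2 + 2B^2\|\phi_2\|_{L^2(1,2)}^2.
\end{equation*}
It then suffices to show that the ratios $r_i(\omega) := \|\phi_i(\omega,\cdot)\|_{L^\infty(|z|\leq 1)}/\|\phi_i(\omega,\cdot)\|_{L^2(1,2)}$ are uniformly bounded in $\omega\in\mathbb{R}$. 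I will verify this in three regimes: for $\omega \geq 1$, with $\phi_1 = \cos(\sqrt\omega z)$, $\phi_2 = \sin(\sqrt\omega z)/\sqrt\omega$, the bound follows from elementary trigonometric integration (oscillations on the interval $(1,2)$ make the denominators uniformly bounded below while the numerators stay bounded); for $\omega = -\mu^2 \leq -1$, with $\phi_1 = \cosh(\mu z)$, $\phi_2 = \sinh(\mu z)/\mu$, the exponential growth $\cosh\mu\sim e^\mu/2$ on $|z|\leq 1$ is dominated by the stronger growth $\|\phi_i\|_{L^2(1,2)}^2 \gtrsim e^{4\mu}/\mu$ on the annulus, so the ratio in fact decays; and for $|\omega|\leq 1$, I will argue by compactness, using that $\phi_i(\omega,z)$ depends continuously on $\omega$ and cannot vanish identically on $(1,2)$ (an analytic nontrivial ODE solution has only isolated zeros), so each $r_i$ is a continuous finite function of $\omega$ on the compact set $[-1,1]$, hence bounded.

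The main obstacle will be gluing the three regimes into one uniform constant $C$. This reflects the observation made in the introduction that the argument lives ``at the edge of the uncertainty principle'' on the original scale $\beta h^{1/2}$; only the rescaling into a parameter-free 1D picture reveals that the problem is purely universal. The bounded-$\omega$ regime is the conceptually important one: it is the place where no explicit oscillation or decay formula applies, and one must genuinely invoke the (in 1D essentially trivial but structurally essential) uniqueness/injectivity of restriction-to-$(1,2)$ for homogeneous ODE solutions.
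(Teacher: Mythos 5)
Your approach is genuinely different from the paper's. After the common rescaling $x=\beta h^{1/2}z$, the paper introduces a cutoff $\chi\in C^\infty_0(-2,2)$, writes $\chi v$ as an exact solution of $(\partial_z^2+\tau)(\chi v)=\chi k + 2\partial_z(\chi'v)-\chi''v$, and then splits into an \emph{elliptic} regime $\tau\le -1$ (energy estimate plus one-dimensional Gagliardo--Nirenberg) and a \emph{hyperbolic} regime $\tau\ge -1$ (explicit oscillatory kernel with an integration by parts to absorb the $\partial_z(\chi'v)$ term). You instead keep $v$ itself, decompose it on $[-2,2]$ into a homogeneous solution $w_h$ and a particular solution $w_p$, and control $w_h$ by expanding in the canonical even/odd basis so that the cross term drops out on the symmetric annulus. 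That parity trick, and the compactness argument for the ratios $r_i(\omega)$ at bounded $\omega$, are a nice alternative to the paper's cutoff plus Gagliardo--Nirenberg. (The paper also obtains the sharper $L^1$ right-hand side with the gain $\frac{1}{\sqrt{1+|\tau|}}$, which your argument does not, but that extra strength is not used in Proposition~\ref{rescalledter}.)

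However, there is a concrete gap in the treatment of $w_p$. You claim that both constructions give $\|w_p\|_{L^\infty([-2,2])}\le C\|F\|_{L^2([-2,2])}$ \emph{uniformly in $\omega$}. For $\omega\ge 0$ this is fine: $\omega^{-1/2}|\sin(\sqrt\omega(z-s))|\le |z-s|\le 4$ on $[-2,2]^2$, so the Duhamel formula is uniformly bounded down to $\omega=0$. But for $\omega=-\mu^2<0$ you use the decaying Green's function $G(z,s)=-e^{-\mu|z-s|}/(2\mu)$, and the convolution bound is of order
$\frac{1}{2\mu}\bigl\|e^{-\mu|\cdot|}\bigr\|_{L^2}\|F\|_{L^2}\sim \mu^{-3/2}\|F\|_{L^2}$,
which blows up as $\mu\to 0^{+}$. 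So the constant is \emph{not} uniform near $\omega=0^{-}$, and the claim as stated is false. The fix mirrors what you already did for the ratios $r_i$: introduce a third regime. For $|\omega|\le 1$, take the variation-of-parameters particular solution $w_p(z)=\int_{-2}^{z}\phi_2(\omega,z-s)F(s)\,ds$ (here $\phi_2(\omega,\cdot)$ is your odd basis solution); since $\mu^{-1}\sinh(\mu t)$ is bounded by $\sinh 4$ on $|t|\le 4$ and $0\le\mu\le 1$, and similarly on the $\omega\ge 0$ side, this is bounded uniformly for $|\omega|\le 1$. Reserve the decaying Green's function for $\omega\le -1$. With that modification the argument is complete.
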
 

We change variables $x= \beta h^{\frac 1 2} z$, and it is enough to prove, for solutions of 
$$(h\beta^{-2} \partial_z^2 + 1- h^2 \eta^2)v=g,$$
\begin{equation}
\| v\|_{L^\infty( \{ |z|\leq 1\} )}\\
\leq C  \Bigl( \| v\|_{L^2( \{1\leq  |z| \leq 2 \} )} + h^{-1} \beta^2  \| {g}\|_{L^2( \{ |z| \leq 2 \} )} \Bigr ).
\end{equation}
Finally, this latter estimate follows (with $\tau= \beta^2 h^{-1} (1- h^2 \eta^2) $) from the following result which is generalization of ~\cite[Proposition 3.2]{BuZu15} (remark that taking benefit of the dimension $1$, we can replace the $L^2$ norm in the left of~\cite[Proposition 3.2, (3.3)]{BuZu15} by an $L^\infty$ norm).
\begin{lemm}
There exists $C>0$ such that, for any $\tau \in \mathbb{R}$ and any solution $(v,k)$ on $(-2,2)$ of 
$$ (\partial_z^2 + \tau)v=k,$$
then
$$\|v\|_{L^\infty(-1,1)}\leq C \Bigl (\| v\|_{L^2(\{ 1\leq |z|\leq 2\} )} + \frac 1 {\sqrt{ 1+ |\tau|}} \| k\|_{L^1(-2,2)}\Bigr),$$

\end{lemm}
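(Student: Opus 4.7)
The plan is to analyze three regimes of $\tau$ separately. In each regime, decompose $v = v_h + v_p$ where $v_p$ is a specific particular solution with $\|v_p\|_{L^\infty(-2,2)} \leq C(1+|\tau|)^{-1/2}\|k\|_{L^1(-2,2)}$ and $v_h := v - v_p$ solves the homogeneous equation $(\partial_z^2+\tau)v_h = 0$. Since $\|v_p\|_{L^2(\{1\leq|z|\leq 2\})} \leq 2\|v_p\|_{L^\infty(-2,2)}$, the lemma reduces to the homogeneous bound
$$\|v_h\|_{L^\infty(-1,1)} \leq C\,\|v_h\|_{L^2(\{1\leq|z|\leq 2\})},$$
with $C$ independent of $\tau$.

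For the particular solutions I use the natural bounded Green's functions on $\mathbb{R}$ in the two unbounded regimes: for $\tau = \omega^2 \geq \omega_0^2$, $v_p(z) = \int_{-2}^2 \frac{\sin(\omega|z-w|)}{2\omega}\,k(w)\,dw$; for $\tau = -\mu^2 \leq -\mu_0^2$, $v_p(z) = -\int_{-2}^2 \frac{e^{-\mu|z-w|}}{2\mu}\,k(w)\,dw$; and for $|\tau| \leq \tau_0 := \max(\omega_0^2,\mu_0^2)$, the variation-of-constants formula $v_p(z) = \int_0^z(v_2(z;\tau)v_1(w;\tau) - v_1(z;\tau)v_2(w;\tau))\,k(w)\,dw$, where $v_1(\cdot;\tau),v_2(\cdot;\tau)$ are the fundamental solutions normalized by $v_1(0) = v_2'(0) = 1$, $v_1'(0) = v_2(0) = 0$. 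Pointwise bounds on the three kernels all yield $\|v_p\|_{L^\infty(-2,2)} \leq C(1+|\tau|)^{-1/2}\|k\|_{L^1(-2,2)}$.

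For the homogeneous estimate, in the oscillatory regime write $v_h = A\cos(\omega z) + B\sin(\omega z)$ and expand
$$v_h(z)^2 = \tfrac{A^2+B^2}{2} + \tfrac{A^2-B^2}{2}\cos(2\omega z) + AB\sin(2\omega z).$$
Integrating on $\{1\leq|z|\leq 2\}$, the oscillatory contributions are bounded by $2(A^2+B^2)/\omega$, so for $\omega_0$ large $\|v_h\|_{L^2(\{1\leq|z|\leq 2\})}^2 \geq \tfrac12(A^2+B^2) \geq \tfrac14\|v_h\|_{L^\infty(-1,1)}^2$. In the elliptic regime write $v_h = Ce^{\mu z} + De^{-\mu z}$; summing the dominant contributions on $[1,2]$ and $[-2,-1]$ gives $\|v_h\|_{L^2(\{1\leq|z|\leq 2\})}^2 \geq c\mu^{-1}e^{4\mu}(C^2+D^2)$, while $\|v_h\|_{L^\infty(-1,1)} \leq (|C|+|D|)e^\mu$, so the ratio is $\lesssim \mu^{1/2}e^{-\mu}$, uniformly bounded for $\mu \geq \mu_0$. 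In the bounded regime, the linear restriction map $(A,B) \mapsto (Av_1(\cdot;\tau)+Bv_2(\cdot;\tau))|_{\{1\leq|z|\leq 2\}}$ is injective, since any nonzero analytic solution of the ODE cannot vanish on a set of positive measure; it depends continuously on $\tau$, so by compactness of $[-\tau_0,\tau_0]$ its inverse is uniformly bounded, and combined with the uniform $L^\infty(-1,1)$ bound on $v_j(\cdot;\tau)$ this yields the estimate.

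The main technical point I expect to wrestle with is the oscillatory regime: one must make the quantitative lower bound $\|v_h\|_{L^2(\{1\leq|z|\leq 2\})}^2 \geq c(A^2+B^2)$ explicit, by fixing a concrete threshold $\omega_0$ such that the oscillatory corrections from $\cos(2\omega z)$ and $\sin(2\omega z)$ cannot overcome the mean value $\tfrac12(A^2+B^2)$. Once $\omega_0$ (and analogously $\mu_0$) are fixed, the compact regime is handled by soft continuity/compactness arguments and the elliptic regime is exponentially favorable, so the three uniform constants combine to give the final $C$.
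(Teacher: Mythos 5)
Your proof is correct but follows a genuinely different route from the paper. The paper works directly with the cut-off $u=\chi v$, which satisfies $(\partial_z^2+\tau)u = \chi k + 2\partial_z(\chi'v) - \chi''v$, and handles only two regimes: for $\tau\le -1$ an energy estimate (pairing with $u$, integrating by parts, then Gagliardo--Nirenberg $\|u\|_{L^\infty}\lesssim \|\partial_z u\|_{L^2}^{1/2}\|u\|_{L^2}^{1/2}$) gives the bound, and for $\tau\ge -1$ the explicit Duhamel representation $u(x)=\int_{-2}^x g(z)\int_z^x e^{i\sigma(2y-x-z)}dy\,dz$ with $\sigma=\sqrt{\tau}$ is used directly, exploiting the uniform bound $|\int_z^x e^{i\sigma(2y-x-z)}dy|\le C/(1+|\sigma|)$ on the inner kernel. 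You instead split $v=v_h+v_p$ with an explicit Green's-function particular solution in each regime and then prove the purely homogeneous inequality $\|v_h\|_{L^\infty(-1,1)}\le C\|v_h\|_{L^2(\{1\le|z|\le 2\})}$, handling oscillatory and elliptic regimes by hand and the intermediate range by a compactness/continuity argument. Your route buys a very transparent ODE-style argument at the price of a three-way case split and a soft compactness step; the paper's route is slightly more PDE-flavored (energy estimate plus Duhamel) and covers the full range of $\tau$ in two cases without appealing to compactness. Both are complete and correct; the only minor point to be careful about in your write-up is that in the intermediate regime the infimum $\inf_{\|(A,B)\|=1}\|Av_1(\cdot;\tau)+Bv_2(\cdot;\tau)\|_{L^2(\{1\le|z|\le2\})}$ is a continuous, positive function of $\tau$ on the compact interval, so it is bounded below — you invoke this correctly, but should state it explicitly.
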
 
Let $\chi \in C^\infty_0 (-2,2)$ equal to $1$ on $(-1,1)$. Then $u= \chi v$ satisfies 
\begin{equation}\label{statio}
  (\partial_z^2 + \tau ) u = \chi {k}+ 2 \partial_z (\chi' v) - \chi '' v.
  \end{equation}
{W}e distinguish two regimes.\\
$\bullet$ Elliptic regime, $\tau \leq -1$. Then, multiplying by $u$ and integrating by parts gives
 \begin{multline}
 \| \partial_zu\|^2_{L^2(-2,2)}+ |\tau| \|u\|^2_{L^2(-2,2)} \\
 = {-}\Bigl( \chi {k}+ 2 \partial_z (\chi' v) - \chi '' v, u \Bigr)_{L^2} = {-}\Bigl( \chi {k} - \chi '' v, u \Bigr)_{L^2}{+} 2\Bigl( \chi' v, \partial_z u \Bigr)_{L^2},
 \end{multline}
 which implies 
 \begin{multline}
 \| \partial_zu \|^2_{L^2(-2,2)}+ |\tau | \| u\|^2_{L^2(-2,2)} \\
 \leq C \Bigl( \| {k}\|_{L^1(-2,2)}\| u\|_{L^\infty} + \| v\|_{L^2(\{1\leq |z|\leq 2\})} (\|{u}\|_{L^2(\{1\leq |z|\leq 2\})} + \| \partial _z u\|_ {L^2(-2,2)}{)} \Bigr), 
 \end{multline}
 and the one-dimensional Gagliardo-Nirenberg inequality 
 $$ \| u\|_{L^\infty} \leq C {\| \partial _zu\|_{L^2}}^{1/2} \| u\|_{L^2}^{1/2}
 $$ allows to conclude in this regime. \\
 $\bullet$ Hyperbolic regime, $\tau \geq -1$. Let $\sigma= \sqrt{\tau} \in \mathbb{R}^+ \cup i [0,1]$. 
 The solution of 
 ~\eqref{statio} is
 \begin{eqnarray*}
  u(x) &=& \int_{y=-2}^x e^{-i \sigma(x-y)} \int_{z=-2}^y e^{i\sigma(y-z)} g(z) dz dy \\
 &=&\int_{z=-2}^x g(z) \int_{y=z}^x e^{i\sigma(2y -x-z)} dy dz,
 \end{eqnarray*}
  where $g= \chi {k} - \chi '' v+ 2 \partial_z (\chi' v)= g_1 + \partial_z g_2.$
 Since, {for $x,z\in [-2,2]$, }
 $$\bigl | \int_{y=z}^x e^{i\sigma(2y -x-z)} dy\bigr| \leq \frac{ C} {1+ |\sigma|},$$
 the contribution of $g_1$ is uniformly bounded by 
 $$\frac C{1+ |\tau|} (\| \chi {k}\|_{L^1(-2,2)} +  \| {v} \|_{L^1({\{1\leq |z|\leq 2\})}}).$$
 Integrating by parts in the integral involving $\partial_z g_2$, we see that similarly, the contribution of $\partial_z g_2$ is bounded by 
 $$C \| \chi '  v\|_{L^1(-2,2)} .$$

 \subsection{Second micro-localization}In this section we develop the tools required to understand the concentration properties of our sequence $(u_n)$ on the symplectic sub-manifold $\{x=0, \xi=0\}$ of the phase space $T^* \mathbb{T}^2$. {The construction is very close to the one in Fermanian--Kammerer~\cite{FK05}.}
 \subsubsection{Symbols and operators}\label{sec.4.1}
 We define  $S^m$ the class of smooth functions of the variables $(X, \Xi, z, \zeta)\in \mathbb{R}^2 \times \mathbb{R}^2\times \mathbb{R}\times \mathbb{R}$ which have compact supports with respect to the $(X,\Xi)$ variables and are polyhomogeneous of degree $m$ with respect to the $(z, \zeta)$ variables, with limits in the radial direction 
 $$ \lim_{r \rightarrow + \infty} \frac 1{r^m} a\left (X, \Xi, \frac{ (rz, r\zeta )} {\| (z, \zeta)\|} \right )= \widetilde{a} \left (X, \Xi, \frac{ (z, \zeta )} {\| (z, \zeta)\|}\right ).$$
  When $m=0$, via the change of variables 
 $$(z, \zeta) \mapsto (\widetilde z, \widetilde \zeta)  = \frac {( z, \zeta)} {\sqrt{ 1+ |z|^2+ |\zeta|^2}} ,$$
 such functions are identified with smooth compactly supported functions on $\mathbb{R}^4_{(X, \Xi)} \times \overline{B(0,1)}_{\widetilde{z}, \widetilde{\zeta}}$, where $\overline{B(0,1)}$ denotes  the closed unit ball in $\mathbb{R}^2$.

Let $\epsilon(h)$ satisfying
$$ \lim_{h\rightarrow 0 } \epsilon (h) =0, \qquad \epsilon (h) \geq h^{1/2}.$$

 In order to perform the second micro-localization around the sub-manifold given by the equations 
 $x=0, \xi =0$, we define,  for $a\in S^m$,
 $$ \text{Op}_h (a) = a\left (x,y, hD_x, hD_y,  \frac{ \epsilon(h)}{h^{1/2}} x, \epsilon (h) h^{1/2} D_x\right ),$$
 {where $X=(x,y)$, $\Xi =(\xi ,\eta )$.}
 Notice that this quantification is the usual one~\cite{Ho85}, associated to the symbol $$a\left (x,y, \xi, \eta, \frac{ \epsilon(h)}{h^{1/2}} x, \epsilon (h) h^{1/2} \xi \right ) . $$ A simple calculation shows that since  $\epsilon({h}) \geq h^{1/2}$, {the  latter} symbol belongs to the class $S(\bigl( 1+ \epsilon^2 (h) h^{-1} {x}^2+\epsilon^2 (h) h {\xi}^2\bigr)^{m/2},g)$ of the Weyl-H\"ormander calculus \cite{Ho85} for the metric 
 \begin{multline}
 g=\frac{ \epsilon^2(h)}{h}\frac{ dx^2}{1+ \epsilon ^2(h)  h^{-1}x^2 + \epsilon ^2 (h) h \xi^2} + \epsilon^2(h)h\frac{ d\xi^2}{ 1+ \epsilon ^2(h)  h^{-1}x^2 + \epsilon ^2 (h) h \xi^2} \\
 + \frac{ dy^2} {1+y^2 + h^2\eta^2} + h^2 \frac{ d\eta^2} { 1+y^2 + h^2\eta^2}.\end{multline}
 
 As a consequence,  we deduce that the operator{s} such defined enjoy good properties and we have a good symbolic calculus, {namely} for all $a \in S^0$, the operator $\text{Op}(a)$ is bounded on $L^2( \mathbb{R}^2)$ uniformly with respect to $h$, and 
 $$ \forall a \in S^p, b\in S^q,  ab \in S^{p+q} \text{ and }\text{Op} (a) \text{Op} (b) = \text{Op}(a b) + \epsilon^2 (h) r,
 $$
 where $r\in \text{Op}(S^{p+q-1})$, and 
 $$ \forall a \in S^0, a\geq 0 \Rightarrow \exists C>0; \Re (\text{Op} (a)) \geq - C \epsilon ^2(h), \|\Im ( \text{Op} (a))\|\leq C \epsilon ^{{2}}(h) .$$

 \subsubsection{Definition of the second semi-classical measures}
 In this Section, we consider a sequence $(u_n)$ of functions on the two dimensional torus $\mathbb{T}^2$ such that 
 \begin{equation}\label{eq.osc}
  (h_n^2 \Delta + 1) u_n = \mathcal{O}(1)_{L^2},
  \end{equation}
 We identify $u_n$ with a periodic function on $\mathbb{R}^2$. Now, using the symbolic calculus properties in Section~\ref{sec.4.1},  {and in particular G\aa rding inequality and the $L^2$ boundedness of operators}, we can extract a subsequence (still denoted by  $(u_n)$) such that there exists a positive measure $\widetilde{\mu}$ on $T^* \mathbb{T}^2 \times \overline {N}$ --- $\overline {N}$ {denotes} the sphere compactification of $N= \mathbb{R}^2_{z,\zeta}$ --- such that, for any  symbol $a\in {S^0}$,
 $$ \lim_{n\rightarrow + \infty} \Bigl( \text{Op}_{h_n} (a) u_n, u_n \Bigr)_{L^2}= \langle \widetilde{\mu}, \widetilde{a}\rangle,$$
 where the continuous function, $\widetilde{a}$  on $T^* \mathbb{R}^2 \times \overline{N}$ is naturally defined in the interior by the value of the symbol $a$ and on the sphere at infinity by
 $$ \widetilde{a} (x,y,\xi, \eta, \widetilde{z}, \widetilde{\zeta})= \lim_{r\rightarrow + \infty} a(x,y,\xi, \eta, r  \widetilde{z}, r\widetilde{\zeta}),$$
 (which exists because $a$ is polyhomogeneous of degree $0$).
 The measure $\widetilde{\mu}$ is of course periodic, and hence defines naturally a measure $\mu$ on $T^* \mathbb{T}^2 \times \overline{N}$, and using~\eqref{eq.osc}, it is easy to see that there is no loss of mass at infinity in the $\Xi$ variable:
 \begin{equation}\label{mass}
  \mu (T^* \mathbb{T}^2 \times \overline{N})= \lim_{n\rightarrow + \infty } \| u_n\|^2_{L^2 ( \mathbb{T}^2)} .
  \end{equation}
 \subsubsection{Properties of the second semi-classical measure}
 In this section, we turn to the sequence constructed in {Section~\ref{sec.2.1}} and study refined properties of the second semi-classical measure constructed above, for the choice $\epsilon (h) $ given by~\eqref{epsi}. Notice that compared to~\eqref{eq.osc} the sequence considered here satisfies the stronger
 $$ (h_n ^2 \Delta + 1) u_n = o( h_n)_{L^2}.$$
 \begin{prop}\label{prop-propag}
 The measure $\mu$ satisfies the following properties.   
 {\begin{enumerate}
 \item \label{4.1} Assume only that $$ (h_n ^2 \Delta + 1) u_n = O(1)_{L^2}.$$ Then the measure $\mu $ has total mass $1= \ \| u_n\|_{L^2}^2  $ ($h_n$- oscillation)
 \item \label{4.2} Assume now that $$ (h_n ^2 \Delta + 1) u_n = o(h_n)_{L^2} $$ and $\| a u_n \|_{L^2} = o(1)$. Then, since the projection of the measure $\mu$ on the $(x,y,\xi, \eta)$ variables is the measure $\nu$ of {Section~\ref{sec.2.1}} which is invariant by the bicharacteristic flow, we get that the measure $\mu$  is supported on the set $$\{ (x,y, \xi, \eta, z, \zeta); x=0, \xi =0, \eta = \pm 1\} $$ 
 \item \label{4.4} Assume now that $$ (h_n ^2 \Delta + 1) u_n = O(h_n \epsilon ( h_n))_{L^2} $$ Then the measure $\mu $ is supported on the sphere at infinity in the $(z, \zeta)$ variables. 
  \item \label{4.5} Assume now that  $$ (h_n ^2 \Delta + 1) u_n = O(h_n \epsilon ( h_n))_{L^2}, \qquad \| 1_R u_n \|_{L^2} = o(1), $$ where $R$ is a polygon. Then the measure $\mu$ vanishes $2$-microlocally at each point of $\partial R$ on the side where the polygon $R$ lies. Namely in our geometry,  the measure $\mu$ vanishes $2$-microlocally on the right on $\{ x=0, y\in(0, \frac 1 2) \cup (-1, - \frac 1 2)\}$ and $2$-microlocally on the left on $\{ x=0, y\in(-\frac 1 2, 0) \cup (\frac 1 2, 1)\}$, {more precisely,}
  \begin{equation}\label{apriori}
  \begin{gathered}
   \mu (\{ (x,y,\xi, \eta, z,\zeta) ; x=0, y\in (0, 1/ 2)\cup(-1, - 1 /2), z >0\})=0\\
    \mu (\{ (x,y,\xi, \eta, z,\zeta) ; x=0, y\in (- 1/ 2, 0)\cup  (1/ 2, 1), z <0\}) =0
  \end{gathered}
  \end{equation}
  \item  According to point ~\ref{4.4} above, if we identify  the sphere at infinity in the $(z, \zeta)$ variables with $\mathbb{S}^1$ by means of the choice of variables $z= r\cos ( \theta), \zeta= r\sin (\theta), r\rightarrow + \infty$, the measure $\mu$ can be seen as a measure in $(x,y,\xi, \eta, \theta)$ variables, supported on $x=0, \xi=0, \eta = \pm 1$. 
In this  coordinate system, we have
    \begin{equation}\label{eqprop}
   (\eta \partial_y - \sin^2 (\theta) \partial_\theta )\mu =0.  
   \end{equation}
     \end{enumerate}}
   \end{prop}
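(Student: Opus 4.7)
The five claims split into largely independent arguments. Items (1) and (2) are standard: for (1) the elliptic regularity of the symbol $1-|\Xi|^2$ away from the characteristic set, combined with the equation $(h_n^2\Delta+1)u_n=O(1)$, prevents any loss of mass at $|\Xi|=\infty$; the sphere compactification of $N$ takes care of the $(z,\zeta)$-direction, so the total mass of $\mu$ is exactly $\lim_n\|u_n\|_{L^2}^2=1$. For (2), the push-forward of $\mu$ onto $T^*\mathbb{T}^2$ is the standard semi-classical measure $\nu$ of Section~\ref{sec.2.1}, whose support under the hypotheses $(h_n^2\Delta+1)u_n=o(h_n)$ and $\|a^{1/2}u_n\|_{L^2}=o(1)$ has already been identified with the two bad geodesics $\{x=0,\xi=0,\eta=\pm 1\}$.

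For (3), the key ingredient is the a priori non-concentration estimate of Proposition~\ref{aprioriprop}. Given $a\in S^0$ with $(z,\zeta)$-support in $\{|(z,\zeta)|\le R\}$, the operator $\text{Op}_{h_n}(a)$ is essentially localized in $x$ to the set $\{|x|\le Rh_n^{1/2}/\epsilon(h_n)\}$, which for $n$ large lies inside $\{|x|\le h_n^{1/2}\epsilon(h_n)^{-2}\}$. Proposition~\ref{aprioriprop} bounds $\|u_n\|_{L^2}$ on this set by $O(\epsilon(h_n)^{1/2})\to 0$, whence $\langle\text{Op}_{h_n}(a)u_n,u_n\rangle\to 0$ and $\mu(a)=0$. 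Thus $\mu$ is supported on the sphere at infinity in $(z,\zeta)$.

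Item (4) combines (3) with the hypothesis $\|1_R u_n\|_{L^2}=o(1)$. At a boundary point $(0,y_0)\in\partial R$ with $R$ lying locally in $\{x>0\}\cap\{y\in I\}$ (the "right-hand side" case), I would pick a symbol $a\in S^0$ whose values at the sphere at infinity in $(z,\zeta)$ are supported in the right half $\{\theta\in(-\pi/2,\pi/2)\}$, and whose $(x,y)$-support, after quantization, is contained in $\overline{R}$ near $(0,y_0)$. By pseudo-locality, and because $z>0$ at the sphere at infinity corresponds to $x>0$ at the scale $h_n^{1/2}/\epsilon(h_n)$, the operator $\text{Op}_{h_n}(a)$ essentially restricts to $R$ for $n$ large, giving $\|\text{Op}_{h_n}(a)u_n\|_{L^2}\le C\|1_R u_n\|_{L^2}+O(h^\infty)=o(1)$ and hence the claimed vanishing $\mu(a)=0$.

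For the propagation equation, I would test commutators $[h_n^2\Delta+1,\text{Op}_{h_n}(b)]$ against $u_n$ for $b\in S^0$ stabilizing at the sphere to a function $\tilde{b}(y,\eta,\theta)$. Self-adjointness and the sharper bound $\|(h_n^2\Delta+1)u_n\|_{L^2}=o(h_n\epsilon(h_n))$ yield $\langle[h_n^2\Delta+1,\text{Op}_{h_n}(b)]u_n,u_n\rangle=o(h_n\epsilon(h_n))$. Expanding the commutator in the Weyl--H\"ormander calculus attached to $g$, the leading term $\tfrac{h}{i}\{1-\xi^2-\eta^2,b\}$, after substitution $\xi=\zeta/(\epsilon h^{1/2})$ and polar change of variables $(z,\zeta)=r(\cos\theta,\sin\theta)$, equals $-2i(\sin^2\theta\,\partial_\theta-h\eta\,\partial_y)b$ at the sphere at infinity. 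Passing to the limit $n\to\infty$, the $O(1)$ leading contribution tested against arbitrary $b$ gives $\sin^2\theta\,\partial_\theta\mu=0$; testing against $b$ independent of $\theta$ kills this leading term and, after dividing the remaining $O(h)$ part by $h$, yields $\eta\,\partial_y\mu=0$. Combining these two yields the stated equation. The main technical hurdle is here: the metric $g$ sits at the edge of the uncertainty principle, so one has to verify with care that every remainder appearing in the Weyl--H\"ormander composition for the commutator remains of size $o(h\epsilon)$, strictly smaller than what the right-hand side of the equation allows.
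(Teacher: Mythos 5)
The proposal handles items (1)--(3) correctly and by the same route as the paper: \eqref{mass} for (1), the push-forward identity $\pi_*\mu=\nu$ for (2), and Proposition~\ref{aprioriprop} for (3), noting that a symbol compactly supported in $(z,\zeta)$ is, after quantization, localized in $x$ strictly inside the exclusion region of Proposition~\ref{aprioriprop}. Items (4) and (5), however, contain genuine errors.

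For item (4), the asserted bound $\|\text{Op}_{h_n}(a)u_n\|_{L^2}\le C\|1_Ru_n\|_{L^2}+O(h^\infty)$ is not justified by pseudo-locality. The left quantization used here localizes only the \emph{output}: the kernel $K(x,y;x',y')$ of $\text{Op}_{h_n}(b)$ inherits the constraint $x\in R$ coming from the $x$-- and $z$--dependence of the symbol (since $z=\epsilon h^{-1/2}x$ is a pure multiplication), but nothing forces $x'\in R$. Moreover, the kernel oscillates at scale $h^{1/2}/\epsilon$ in $x-x'$, so it is \emph{not} $O(h^\infty)$ across $\partial R$; there is an $O(1)$ boundary layer of width $h^{1/2}/\epsilon$. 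Thus the input truncation you need is false. The paper's argument is the correct one: since $1_R\,\text{Op}_{h_n}(b)=\text{Op}_{h_n}(b)$, one has $\bigl(\text{Op}_{h_n}(b)u_n,u_n\bigr)=\bigl(\text{Op}_{h_n}(b)u_n,1_Ru_n\bigr)=O(\|u_n\|\,\|1_Ru_n\|)=o(1)$, and the vanishing of $\mu$ follows by Cauchy--Schwarz without ever bounding $\|\text{Op}_{h_n}(b)u_n\|$ by $\|1_Ru_n\|$.

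For item (5), the proposed power counting is wrong: you split the transport law into two separate equations $\sin^2\theta\,\partial_\theta\mu=0$ and $\eta\,\partial_y\mu=0$ by claiming the $\partial_\theta$ term appears at order $1$ and the $\partial_y$ term at order $h$. This comes from the substitution $\xi=\zeta/(\epsilon h^{1/2})$, i.e.\ $\zeta=\epsilon h^{1/2}\xi$, which is the identification with the \emph{non-semiclassical} Fourier variable. In terms of the semiclassical frequency $\xi_{\mathrm{sc}}=hD_x$ (which is the $\xi$ that lives on the characteristic set $\xi^2+\eta^2=1$ and is used in the measure), $\zeta=\epsilon h^{1/2}D_x=\epsilon h^{-1/2}\xi_{\mathrm{sc}}$, so the chain rule gives $\xi_{\mathrm{sc}}\,\epsilon h^{-1/2}\partial_zq=\zeta\,\partial_zq$ with no residual power of $h$. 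The apparent $h$ in your expression is exactly the factor that converts $\eta_{\mathrm{full}}$ to $\eta_{\mathrm{sc}}$, so both $\eta\partial_y$ and $\zeta\partial_z$ sit at the same order, as the exact identity~\eqref{commut1} shows. Consequently the correct conclusion is the single combined equation~\eqref{eqprop}, $(\eta\partial_y-\sin^2\theta\,\partial_\theta)\mu=0$, not two decoupled invariances. Even though the (false) stronger statement would still force $\mu=0$ in the final contradiction, the derivation is incorrect, and the ``dividing the $O(h)$ remainder by $h$'' step would anyway require control of the subprincipal terms of the commutator at order $o(h^2)$, which is not available.
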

   {\begin{rema}
   In Proposition~\ref{prop-propag}, the only point where we use crucially the particular geometry of the torus (Proposition~\ref{aprioriprop}) is  point~\ref{4.4}. For more general geometries, this point is no more true.  However, for the part on the sphere at infinity of the measure, we can still get an analog of ~\eqref{eqprop} for more general geometries,  involving the curvature of the surface along the geodesic (see~\cite{Bu19}). 
   \end{rema}
   }
   \begin{proof}
   The proof of point~\ref{4.1} follows from~\eqref{mass}. To prove point~\ref{4.2}, we just remark that the choice of test functions $a(x,\Xi, z, \zeta)= a(X, \Xi)$ shows that the direct image {$\pi_*(\mu)$} of $\mu$  by the map 
   $$\pi: (X, \Xi, z, \zeta) \mapsto (X,\Xi),$$
    is actualy the (first) semi-classical measure $\nu$ constructed in Section~\ref{sec.2.1}, and consequently, this property follows from {Section~\ref{sec.2.1}}. 
   To prove point~\ref{4.4}, we recall that  from Proposition~\ref{aprioriprop}, we have that for any $\chi\in C^\infty_0$, bounded by $1$ and supported in $(-A,A)$ 
 \begin{multline} \| \chi (h_n^{-1/2} \epsilon (h_n)x) u_n\|_{L^2}^2 \leq  \| u_n\|_{L^2( \{|x| \leq Ah^{1/2} \epsilon^{-1} (h) \}} \\
 \leq \| u_n\|_{L^2( \{|x| \leq  h^{1/2} \epsilon^{-2} (h) \}}  \Rightarrow \langle \mu, \chi (z)\rangle =0.
 \end{multline}

   To prove point~\ref{4.5}, recall {from Figure~\ref{fig.4}} that the damping $a$ is equal to $1$ on $(0, \frac 1 2) \times (0, \frac 1 2)$ and that 
  $$\| a u_n\|_{L^2}= \| a^{1/2} u_n\|_{L^2}  =o(1)_{n\rightarrow + \infty}.$$ Point~\ref{4.5} will follow from 
 { \begin{prop}
  Assume that 
  $$\| a u_n\|_{L^2}= o(1)_{n\rightarrow + \infty},$$
  and that the damping is equal to $1$ on $(0, \delta) \times (c,d) $ (resp. $(- \delta, 0) \times (c,d) $).
  Then the measure~$\mu$ vanishes two- microlocally on the right (resp on the left) above $T^*M\mid_{\{0\} \times (c,d)} $:
  \begin{equation}
  \begin{aligned}
  \mu \left (\left \{ (x,y,\xi, \eta, z,\zeta) ; x=0, y\in \left (c, d \right ), \eta =\pm 1,  z >0\right \} \right )&=0,\\
  (\text{resp.} \mu \left (\left \{ (x,y,\xi, \eta, z,\zeta) ; x=0, y\in \left (c, d \right ), \eta =\pm 1,  z <0\right \} \right )&=0),
  \end{aligned}
  \end{equation}
  \end{prop}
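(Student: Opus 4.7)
The plan is to test $\mu$ against a dense family of non-negative symbols of the form
\begin{equation*}
a(x, y, \xi, \eta, z, \zeta) = b(y)\, \chi_0(x)\, \psi(\xi, \eta)\, \chi(z, \zeta),
\end{equation*}
with $b \in C_c^\infty((c,d); \mathbb{R}_+)$, $\chi_0 \in C_c^\infty((-\delta, \delta); \mathbb{R}_+)$ satisfying $\chi_0(0) = 1$, $\psi \in C_c^\infty(\mathbb{R}^2; \mathbb{R}_+)$ equal to $1$ near $(0, \pm 1)$, and $\chi \in S^0$ a non-negative symbol supported in $\{z > 0\}$ whose boundary value $\widetilde{\chi}(\theta)$ at infinity on $S^1_{(z,\zeta)}$ is any prescribed non-negative function supported in the right hemisphere $\{\cos\theta > 0\}$. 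Since $\mu$ lives on $\{x = 0, \xi = 0, \eta = \pm 1\}$ and, by point~\ref{4.4}, on the sphere at infinity in $(z, \zeta)$, the evaluation of $a$ on $\mathrm{supp}(\mu)$ reduces to $b(y)\,\widetilde{\chi}(\theta)$; a partition of unity on the right hemisphere then shows that establishing $\mu(a) = 0$ for all such $a$ implies the proposition.

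The key point is that after substituting $z = \epsilon(h)\, x/h^{1/2}$ and $\zeta = \epsilon(h)\, h^{1/2}\, \xi$, the resulting physical symbol has $(x,y)$-support inside the damping region $(0, \delta) \times (c, d)$ for $h$ small. Indeed, $\chi_0$ forces $x \in (-\delta, \delta)$, while the condition $\mathrm{supp}(\chi) \subset \{z > 0\}$ translates into $\chi(\epsilon(h) x/h^{1/2}, \cdot) = 0$ for $x \leq 0$; combined with $y \in \mathrm{supp}(b) \subset (c, d)$, this gives the inclusion. Since the quantisation of Section~\ref{sec.4.1} is a left quantisation (as in the formula recalled in Section~\ref{sec.2.1}), $[\Op_h(a) u_n](x, y)$ vanishes identically whenever the symbol, evaluated at the output $(x, y)$, is zero as a function of $(\xi, \eta)$. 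Hence $\Op_h(a) u_n$ is itself spatially supported in $(0, \delta) \times (c, d)$, that is, where $a \equiv 1$.

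A direct Cauchy--Schwarz bound then closes the argument: using the uniform $L^2$-boundedness of $\Op_h(a)$ from Section~\ref{sec.4.1} together with the inequality $\|u_n\|_{L^2((0,\delta) \times (c,d))} \leq \|a\, u_n\|_{L^2}$ (valid because $a \equiv 1$ on this region), one finds
\begin{equation*}
\bigl|\langle \Op_h(a)\, u_n, u_n\rangle\bigr| \leq \|\Op_h(a)\, u_n\|_{L^2}\, \|u_n\|_{L^2((0,\delta) \times (c,d))} \leq C\, \|a\, u_n\|_{L^2} = o(1)_{n \to \infty},
\end{equation*}
and passing to the limit gives $\mu(a) = 0$. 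The symmetric statement (damping on $(-\delta, 0) \times (c, d)$, vanishing of $\mu$ on $\{z < 0\}$) is obtained via the reflection $x \mapsto -x$. The main technical point to verify is the spatial support property for $\Op_h(a) u_n$ within the exotic Weyl--H\"ormander calculus of Section~\ref{sec.4.1}, but this follows directly from the shape of the quantisation formula since the support-restricting factors $\chi_0(x)$ and $\chi(\epsilon(h) x/h^{1/2}, \cdot)$ depend on the output position variable $x$ only; no positive commutator or propagation of singularities argument is required at this stage.
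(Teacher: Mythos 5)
Your proof is correct and follows essentially the same route as the paper: you pick a test symbol whose output-variable support lies inside the damping region (exploiting that $z=\epsilon(h)x/h^{1/2}$ has the sign of $x$ so that $\mathrm{supp}\,\chi\subset\{z>0\}$ forces $x>0$), you invoke the left-quantisation support property to see that $\Op_h(a)u_n$ is spatially localised where the damping equals $1$, and you close with Cauchy--Schwarz and $\|a\,u_n\|_{L^2}=o(1)$. The paper carries out the same argument with the concrete cutoff $\psi\bigl(\tfrac{z}{\delta|\zeta|}\bigr)\psi(z^2+\zeta^2)$ and then sends $\delta\to 0$, whereas you parametrise by an arbitrary $S^0$ cone cutoff $\chi$ and finish by density/inner regularity of the positive measure $\mu$ on the open half-circle; this is a cosmetic variation, not a genuinely different argument.
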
}
 Let $\psi\in C^\infty( \mathbb{R})$ supported in $\{1<r\}$ and equal to $1$ for $r\geq 2$. Let $\chi \in C^\infty_0 (-1,1)$ equal to $1$ on $(-\frac 1 2, \frac 1 2)$, and $\widetilde{\chi} \in C^\infty_0 (c,d)$ equal to $1$ on $a+ \delta, b- \delta$, $\delta >0$.  Consider the symbol
 $$ b(x,y, \xi, \eta, z, \zeta)= \chi (\frac{2x} \delta)  \widetilde{\chi} (y) \chi(\xi) \chi ( \eta-1) \psi \left ( \frac{ z} { {\delta |}\zeta|}\right )\psi( z^2 + \zeta^2) 
 $$
 
 On the other hand, since $\chi (2x)  \widetilde{\chi}(y)$ is supported on $(-\frac 1 2, \frac 1 2) _x \times (c,d)_y $ and since $\psi ( \frac{ z} { {\delta} |\zeta|})$ is supported in $z>0$, {we infer that the range of $\text{Op}_{h_n} (b)$ is supported in the domain $(0, \frac 1 2) _x \times (c,d)_y $} and consequently
 \begin{multline}
 \bigl(\text{Op}_{h_n} (b) u_n, u_n\bigr) =\bigl(1_{x\in(0, \frac 1 2)} 1_{y \in (c,d)}  \text{Op}_{h_n} (b) u_n, u_n\bigr)
 = \bigl( \text{Op}_{h_n} (b) u_n, 1_{x\in(0, \frac 1 2)} 1_{y \in (c,d)} u_n\bigr)\\
 = \bigl(  \text{Op}_{h_n} (b) u_n, 1_{x\in(0, \frac 1 2)} 1_{y \in (0,\frac 1 2)}a u_n\bigr))= o(1)_{n \rightarrow +\infty}.\end{multline}
 This implies 
 $$ \mu \left (\left \{ (x,y,\xi, \eta, z,\zeta) ; x=0, y\in \left (c+ \delta,d - \delta \right ), \eta =1, z \geq 2{\delta |}\zeta|\right \}\right ) =0\ .$$
 Taking {$\delta >0$} arbitrarily small, we deduce that on the $(z,\zeta)$ sphere at infinity which contains the support of $\mu$, we have 
 $$ \mu \left (\left \{ (x,y,\xi, \eta, z,\zeta) ; x=0, y\in \left (c, d \right ), \eta =1,  z >0\right \} \right )=0.$$
 The case $\eta = -1$ and the {other properties} in~\eqref{apriori} follow similarly.
 
  To prove the last property, we write for $q \in S^0$
  \begin{multline}\label{commut1}
 \frac 1 {{2}ih_n}\Bigl [ h_n^2\Delta +1, \text{Op}_{h_n} (q)\Bigr] \\=   \text{Op}_{h_n} \bigl(( \xi \partial_x  + \eta \partial_y + \zeta \partial_z)q \bigr)- i 
 {\frac{h_n}{2}}  \text{Op}_{h_n} \bigl( \Delta_{x,y} a\bigr) 
-i{\frac{h_n}{2}} (\epsilon(h_n) h_n^{-1/2})^2 \text{Op}_{h_n} \bigl(    \partial_z ^2 q \bigr).
  \end{multline} 
 {   Since unfolding the bracket shows that, as $n\to \infty $, 
 $$\frac 1 {{2}ih_n}\left (\Bigl [ h_n^2\Delta +1, \text{Op}_{h_n} (q)\Bigr]u_n,u_n\right )\rightarrow 0 ,  $$
 we get }
  \begin{equation}\label{commut}
o(1)_{n\rightarrow \infty} = \Bigl( \text{Op}_{h_n} \bigl(( \xi \partial_x  + \eta \partial_y + \zeta \partial_z)q \bigr)u_n, u_n \Bigr) .
   \end{equation}
{Let us} compute the limit on the sphere at infinity of 
  $( \xi \partial_x  + \eta \partial_y + \zeta \partial_z)q$. 
  We denote by $\widetilde{q}$ the function $q$ in the $r, \theta$ coordinate system. In this system of coordinates, the operator $\zeta \partial _z$ reads 
  $$ - \sin^2 (\theta) \partial_\theta+ r \cos(\theta) \sin(\theta) \partial_r.$$
 {Now we use that, for a polyhomogeneous symbol  $q$  of {\em degree $0$},  the main part of $q$ at infinity does not depend on $r$. As a consequence,  the symbol $r \partial_r q$ is polyhomogeneous of {\em degree $-1$}} (while homogeneity would dictate {\em degree $0$}). Therefore we get, for any polyhomogeneous  symbol $q$ of degree $0$,  
  \begin{multline} \zeta \partial_z q \mid_{\mathbb{S}^{{1}}} = \lim_{r\rightarrow + \infty} 	\bigl(- \sin^2 (\theta) \partial_\theta+ r \cos(\theta) \sin(\theta) \partial_r \bigr)\widetilde{a} (x,y,\xi,\eta, r, \theta)\\
  = - \sin^2( \theta) \partial_\theta\lim_{r\rightarrow + \infty} \widetilde{q} (x,y,\xi,\eta, r, \theta).
  \end{multline}
   Since the measure $\widetilde{\mu}$ is supported in $\xi=0$,  equation~\eqref{eqprop} follows from~\eqref{commut}.
  
  \end{proof}
  
 We can now conclude the contradiction argument, and end the proof of the resolvent estimate~\eqref{obs}. Notice that the two fixed points for the flow of 
 $$\dot{\theta} = - \sin^2( \theta)$$
 are given by $\theta =0 (\pi)$. 
 We want to show that the measure $\widetilde{\mu}$ vanishes identically to get a contradiction with point \ref{4.1} in Proposition~\ref{prop-propag}. For $(x=0, \xi=0, y_0, \eta_0= \pm 1, \theta_0)$  in the support of $\widetilde{\mu}$, let us denote by $\phi_s( \theta_0)$ the solution of
 $$ \frac d {ds} \phi_s(\theta_0)= - \sin^2 ( \phi_s(\theta_0)), \quad \phi_0 ( \theta_0) = \theta_0,$$
 so that $\phi_s( \theta_0 ) = \text{Arccotan} ( s+ \text{cotan}( \theta_0))$. From the invariance~\eqref{eqprop} of the measure $\widetilde{\mu}$, we deduce that 
 $$ \forall s\in \mathbb{R},  (x=0, y_s=y_0 + s\eta_0\, (\text{mod}\, 2\pi) , \xi=0, \eta_0, \theta_s= \phi_s( \theta_0)) \in \text{supp}( \widetilde{\mu}).$$
 Consequently, if $\theta_0 \in [0, \pi)$, there exists $s>0 $ such that  $y_s \in (0, \frac 1 2)\, (\text{mod}\, 2\pi )$ while $\theta_s \in [0,\frac \pi 2)$, while, if $\theta_0 \in [- \pi, 0)$, there exists $s>0 $ such that $y_s \in (-\frac 1 2, 0)\, (\text{mod}\, 2\pi )$ while $\theta_s \in [- \pi, -\frac \pi 2)$. This is impossible according to~\eqref{apriori}.
 \ 
 \section{Back to the general case}\label{sec.3}

Let us work on the torus  {$ \mathbb{T}^2 =\mathbb R^2/ A\mathbb Z \times B\mathbb Z $ with $A>0, B>0$.} Since the irrational directions $\Xi= (A\xi , B\eta); \xi / \eta \notin \mathbb{Q}$ correspond to dense geodesics, and since $a$ is bounded from below on an open set, we deduce that the measure $\nu$ defined in Section~\ref{sec.2.1}  is supported --- in the $\Xi$ variables --- on the set of finitely many rational directions
{$$\Xi= (A\xi , B\eta)\ ;\  \xi / \eta \in \mathbb{Q}\ ,$$ 

satisfying moreover the elliptic regularity condition,
$\vert \Xi \vert^2=1$}, which do not enter the interior of the rectangles. Hence,  there exists an isolated direction $\Xi_0$, so that $(X_0, \Xi_0) \in \text{ supp } (\nu)$, which can be written as
 \begin{equation}
\label{eq:Xi0}   \Xi_0 = \frac1 { \sqrt { n^2 A^2 + m^2 B ^2 } } ( n A
, m B  )\, ,
\ \ \Xi_0^\perp  = \frac1 { \sqrt { n^2 A^2 + m^2 B^2 } } ( - m B ,
n    A  ) \, ,\end{equation}
{where the integers $n,m$ may be chosen to have gcd $1$.}
The change of coordinates in  $\mathbb{R}^2 $, 
\begin{equation}
\label{eq:F}  F \; : \; ( x, y ) \longmapsto  X = F ( x, y )  = x
\Xi_0^\perp  + y \Xi_0 \,, 
\end{equation}
is orthogonal and hence $ - \Delta_X = D_x^2 + D_y^2 $. 

We have the following simple lemma (see~\cite[Lemma 2.7]{BuZw12}), {which can be deduced from an elementary calculation}.
\begin{lemm}\label{lem.2.1}
\label{l:si}
Suppose that $ \Xi_0 $ and $ F $ are given by \eqref{eq:Xi0} and
\eqref{eq:F}. If 
 $ u = u ( {x,y} ) $ is periodic with respect to $ A \mathbb{Z} \times B  \mathbb{Z}$
then {$F^*u:=u\circ F$ satisfies}
\begin{equation}
\label{eq:per}
 F^* u ( x + k a , y + \ell b ) = F^* u ( x , y - k \gamma ) \,, \ \ k,
 \ell \in \mathbb{Z} \,,  \ \ ( x, y ) \in \mathbb{R}^2 \,, 
\end{equation} 
where, for  fixed $ p , q \in \mathbb{Z} $ such that $q n - p m=1$, 
\[ a =   \frac { A B } { \sqrt {  n^2 A^2 + m^2 B ^2 } }    \,,
\ \      b =  \sqrt {  n^2 A^2  + m^2 B^2 }  \,, \ \  \gamma = -
\frac{ pn A^2 + q m B^2  }{  \sqrt { n^2 A^2 + m^2 B^2 } }  \,. \]
When $ B/A = r/s \in \mathbb{Q} $, $r,s \in \mathbb{Z} \setminus \{0\}$, then 
\begin{equation}\label{chgvar}
 F^*{u} ( x + k \widetilde a , y + \ell b ) = F^* u ( x, y ) \,, \ \  \ \ k,
 \ell \in \mathbb{Z} \,,  \ \ ( x, y ) \in \mathbb{R}^2 \,, 
 \end{equation}
for $ \widetilde a = ( n^2 s^2 + m^2 r^2 )  a  $.

\end{lemm}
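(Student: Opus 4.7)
The plan is to reduce both identities to explicit lattice-point computations under the linear, orthogonal change of coordinates $F$. Since $u$ is periodic with respect to the lattice $A\mathbb{Z} \times B\mathbb{Z}$, the pullback $F^* u$ is automatically periodic with respect to the pulled-back lattice $F^{-1}(A\mathbb{Z} \times B\mathbb{Z})$, so it suffices to identify explicit generators of the latter that match the right-hand sides of \eqref{eq:per} and \eqref{chgvar}. Throughout I use $F^{-1}(X) = (X \cdot \Xi_0^\perp,\, X \cdot \Xi_0)$, which follows from the orthogonality of $F$.

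First I would dispatch the $\ell b$-shift by applying $F^{-1}$ to $(nA, mB) \in A\mathbb{Z} \times B\mathbb{Z}$. The first coordinate vanishes automatically since $(nA, mB)$ is proportional to $L\, \Xi_0$, while the second coordinate is $L = b$. This produces the vertical lattice generator $(0, b)$ with essentially no computation.

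The heart of the proof is the $k$-shift, which encodes both $a$ and $\gamma$. Here I would apply $F^{-1}$ to $(pA, qB) \in A\mathbb{Z} \times B\mathbb{Z}$, where $(p, q)$ are the Bezout integers satisfying $qn - pm = 1$. The first coordinate of the result is $AB(qn - pm)/L$, which collapses to $AB/L = a$ by the Bezout identity, while the second coordinate becomes $(pnA^2 + qmB^2)/L$, matching $-\gamma$ (up to the sign convention of the statement). Combining this with the $(0, b)$ generator and taking integer linear combinations yields \eqref{eq:per} for all $k, \ell \in \mathbb{Z}$. The main nontrivial step is precisely this Bezout cancellation in the first coordinate; everything else is routine.

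For \eqref{chgvar} in the rational case $B/A = r/s$, one additional lattice generator along the $x$-axis is required, namely a vector of the form $(\widetilde{a}, 0) \in F^{-1}(A\mathbb{Z} \times B\mathbb{Z})$. The strategy is to find integers $(i, j)$ with $inA + jmB = 0$; under the relation $sB = rA$, the choice $(i, j) = (-mr^2, ns^2)$ solves this. A short calculation using $s^2 B^2 = r^2 A^2$ then identifies the first coordinate of $F^{-1}(iA, jB)$ as $AB(n^2 s^2 + m^2 r^2)/L = (n^2 s^2 + m^2 r^2)\, a = \widetilde{a}$, as desired. The only obstacle in the entire lemma is sign bookkeeping through the Bezout step; the structural strategy is fully determined by the geometry of the lattice $F^{-1}(A\mathbb{Z} \times B\mathbb{Z})$.
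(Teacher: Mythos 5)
Your proof is correct and takes the only sensible, and evidently intended, approach: since $F$ is orthogonal, $F^{-1}(X)=(X\cdot\Xi_0^\perp,\,X\cdot\Xi_0)$, and the whole lemma reduces to exhibiting generators of the pulled-back lattice $F^{-1}(A\mathbb{Z}\times B\mathbb{Z})$. The computation of $(0,b)$ from $(nA,mB)=b\,\Xi_0$, and the Bezout collapse $(pA,qB)\cdot\Xi_0^\perp=\frac{AB}{L}(qn-pm)=\frac{AB}{L}=a$, are exactly right, and the paper indeed offers no proof beyond ``an elementary calculation'' with a pointer to \cite[Lemma~2.7]{BuZw12}, so yours is the natural expansion of that remark.

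Two small points of precision. First, in the rational case, the vanishing of the $\Xi_0$-component of $(iA,jB)$ is the condition $(iA,jB)\cdot\Xi_0 = \frac{1}{L}(inA^2+jmB^2)=0$, i.e.\ $inA^2+jmB^2=0$ (equivalently $ins^2+jmr^2=0$ after using $sB=rA$), not $inA+jmB=0$ as you wrote. Your choice $(i,j)=(-mr^2,ns^2)$ does satisfy the correct equation — indeed $ins^2+jmr^2=-mnr^2s^2+mnr^2s^2=0$ — but it does not satisfy the equation you stated, so the motivating sentence should be corrected even though the resulting formula for $\widetilde a$ is right. Second, the sign discrepancy you flag for $\gamma$ is genuine and you should not ``punt'' on it: the second component of $F^{-1}(pA,qB)$ is $\frac{pnA^2+qmB^2}{L}=-\gamma$, so the pulled-back lattice vector is $(a,-\gamma)$ and the clean conclusion is
\[
F^*u(x+ka,\,y+\ell b)=F^*u(x,\,y+k\gamma)\,,
\]
whereas the lemma as printed has $y-k\gamma$. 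One can check these two are not interchangeable modulo the $b$-periodicity in general (take $A=B=1$, $n=2$, $m=1$, $(p,q)=(1,1)$: then $2\gamma/b=-6/5\notin\mathbb{Z}$), so the printed statement has a sign typo — equivalently, $\gamma$ should be defined with the opposite sign. This is immaterial for the rest of the paper, which only uses the exact $y$-periodicity of period $b$ recorded in \eqref{chgvar}.
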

In this new coordinate {system}, we know that there exists $x_0$ such that $(x_0, y_0, 0, 1)$ is in the support of the measure $F^*\mu$. By translation invariance, we can assume that $x_0=0$. Since $(\xi \partial_x+\eta \partial_y) F^*\mu =0$, we infer that actually the whole line $(x_0=0, \mathbb{R}(\text{mod} 2\pi), 0, 1)$ belongs to the support of $F^*\mu$.  If this bicharacteristic curve enters the interior of the support {of $a$}(i.e. encounters a point in a neighborhood of which $a$ is bounded away from $0$), then by propagation, no point of this bicharacteristic curve lies in the support of $\mu$ which gives a contradiction. On the other hand since assumption~\ref{geom} is satisfied, we know that there exists two (at least) {polygons} $R_1, R_2$ so that the right side of $R_1$ is $\{0\} \times [\alpha, \beta] $ while the left side of $R_2$ is $\{0\} \times [\gamma, \delta]$. {We may shrink these polygons to rectangles having the same property}.
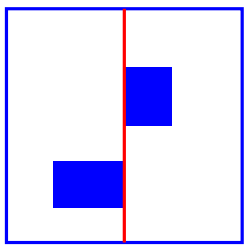
\begin{figure}[ht]
 \begin{center}
\begin{tikzpicture}[scale=1.55]
\fill[blue] (-.6,-.7) -- (-.6,-.3) -- (0,-.3) -- (0,-.7) -- cycle;
\fill[blue] (.4,0) -- (.4,.5) -- (0,.5) -- (0,0) -- cycle;
\draw[very thick, blue] (-1,-1) -- (-1,1) -- (1,1)--(1,-1)--cycle;
\draw[very thick, red] (0,-1) -- (0,1);
\end{tikzpicture}
\end{center}
\caption{The microlocal model: on the left the rectangle $R_1$, on the right the rectangle $R_2$, in the middle the bicharacteristic in the support of $\mu$}
\end{figure}

 In other words, we are microlocally reduced to the study of the  checkerboard in Figure~\ref{fig.4}. Notice that the change of variables we used in Lemma~\ref{lem.2.1} does not keep periodicity with respect to the $x$ variables but transforms it into some pseudo-periodicity condition (see~\eqref{eq:per}). However, for the study of the checkerboard model in Section~\ref{sec.2}, we  only used periodicity with respect to the $y$ variables (to prove Proposition~\ref{aprioriprop})--- which is preserved. The rest of the contradiction argument follows the same lines as in Section~\ref{sec.2}.

 \section{Generalized geometric condition} \label{app.A}
For a general {Riemannian} manifold and a general damping function $a \in L^\infty(M)$,  a natural substitute to~\eqref{gcc} is the following generalized geometric condition.
\begin{equation*}\tag{GGCC}\label{ggcc}
\exists T, c>0\ :\  \liminf_{\epsilon\rightarrow 0} \inf_{\rho_0 \in S^*M} \frac{ 1} {\text{Vol}( \Gamma_{\rho_0, \epsilon,T})}\int_{\Gamma_{\rho_0, \epsilon,T}}  a(x)dx \geq c,
\end{equation*}
where $\Gamma_{\rho_0, \epsilon,T}$ is the set of points at distance less than $\epsilon$ from  the geodesic segment $\{\gamma_{\rho_0} (s), s\in (0,T)\}$. 
At first glance,~\eqref{ggcc} might seem to be a strong condition, difficult to fulfill. We shall prove below that it cannot be relaxed as, on any manifold and for any $a \in L^\infty (M)$, it is a necessary condition for uniform stabilisation. On the other hand, we also prove below that in the case of two dimensional tori it is equivalent to Assumption~\ref{geom}.  
We conjecture that on a general manifold and for general $a\in L^\infty$, uniform stabilisation holds if and only if~\eqref{ggcc} holds. {The results in this article show that it is indeed the case on two dimensional tori,  if $a$ satisfies~\eqref{somme}. For general, dampings it is easy to show that \eqref{ggcc} implies~\eqref{wgcc}, while the compactness of $S^*M$ shows that it is implied by ~\eqref{sgcc} ($\delta$ in ~\eqref{sgcc} can,  by compactness, be chosen the same for al $\rho_0 \in S^*M$). }
\subsection{The generalized geometric condition is necessary for stabilisation}
\begin{theo}\label{theorem.4} Uniform stabilisation implies~\eqref{ggcc}.
\end{theo}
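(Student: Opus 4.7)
By the equivalence in the introduction (property (4)), uniform stabilisation is equivalent to the existence of $T,C>0$ such that the observation inequality $E_m(u)(0)\leq C\int_0^T\!\int_M 2a(x)|\partial_t u|^2\,dx\,dt$ holds for every solution of the undamped equation~\eqref{eq.nonamortie}. I would argue by contrapositive: assuming $\neg$\eqref{ggcc}, show that this inequality cannot hold for any $T$. Fix $T$ as in the observation estimate. The negation of~\eqref{ggcc} applied to this $T$ gives $\liminf_{\epsilon\to 0}\inf_{\rho_0\in S^*M}\frac{1}{|\Gamma_{\rho_0,\epsilon,T}|}\int_{\Gamma_{\rho_0,\epsilon,T}}a=0$, so by a diagonal procedure one may extract sequences $\epsilon_n\to 0$, $\rho_n\in S^*M$, together with a semiclassical parameter $h_n\to 0$ satisfying $\sqrt{h_n}\ll\epsilon_n$, such that
\[
\alpha_n\;:=\;\frac{1}{|\Gamma_{\rho_n,\epsilon_n,T}|}\int_{\Gamma_{\rho_n,\epsilon_n,T}}a\,dx\;\longrightarrow\;0,
\]
and $\alpha_n$ decays as fast as one wishes with respect to $(h_n,\epsilon_n)$ (the freedom over both $\rho_0$ and $\epsilon$ in $\neg$\eqref{ggcc} allows this).

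\textbf{Construction of the witness.} Invoke the Ralston-type Gaussian beam construction already used in Section~\ref{sec.2.1}: it produces approximate solutions $v_n$ of $(\partial_t^2-\Delta+m)v_n=O_{L^2}(h_n)$, concentrated on $\gamma_{\rho_n}$ at transverse Gaussian scale $\sqrt{h_n}$, normalised so that $E_m(v_n)\to 1$ and enjoying a pointwise decay of the form $|\partial_t v_n(t,x)|^2\lesssim h_n^{-(d-1)/2}\exp(-c\,\dist(x,\gamma_{\rho_n}(t))^2/h_n)$. Let $u_n$ be the exact solution of~\eqref{eq.nonamortie} with the same Cauchy data; Duhamel's formula together with the $O_{L^2}(h_n)$ bound on the source gives $E_m(u_n-v_n)=O(h_n^2)$, so $E_m(u_n)\to 1$ as well, and the quantity $\int_0^T\!\!\int_M a|\partial_t u_n|^2$ differs from the same integral for $v_n$ by $O(h_n^2)$.

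\textbf{Estimating the damping functional.} Splitting $M=\Gamma_{\rho_n,\epsilon_n,T}\cup(M\setminus\Gamma_{\rho_n,\epsilon_n,T})$, the exponential localisation of $v_n$ gives a contribution $O(e^{-c\epsilon_n^2/h_n})$ from the complement, while on the tube the crude bound $|\partial_t v_n|^2\lesssim h_n^{-(d-1)/2}$ yields
\[
\int_0^T\!\!\int_{\Gamma_{\rho_n,\epsilon_n,T}}a\,|\partial_t v_n|^2\,dx\,dt\;\lesssim\; h_n^{-(d-1)/2}\cdot T\int_{\Gamma_{\rho_n,\epsilon_n,T}}a\,dx\;\sim\; T^2\,\alpha_n\,(\epsilon_n/\sqrt{h_n})^{d-1},
\]
where I used $|\Gamma_{\rho_n,\epsilon_n,T}|\sim T\epsilon_n^{d-1}$. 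Choosing $\alpha_n\ll(\sqrt{h_n}/\epsilon_n)^{d-1}$ during the diagonal extraction, this tends to $0$. Since $E_m(u_n)\to 1$, this contradicts the observation inequality applied to $u_n$ and establishes \eqref{ggcc}.

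\textbf{Main obstacle.} The only subtle point is the balancing of three scales: $h_n$ must be small enough that $\sqrt{h_n}\ll\epsilon_n$ (so that the Gaussian beam truly lives inside the tube of average-smallness), while the resulting amplification $(\epsilon_n/\sqrt{h_n})^{d-1}$ must not outpace the prescribed decay of $\alpha_n$. The flexibility of $\neg$\eqref{ggcc} — which imposes no rate on $\alpha_n$ as $\epsilon\to 0$ — makes the required diagonal extraction possible, so the argument goes through on any compact Riemannian manifold and for any $0\leq a\in L^\infty(M)$.
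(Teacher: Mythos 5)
Your proof takes essentially the same route as the paper's: argue by contrapositive, test the observation inequality on Ralston Gaussian beams concentrated along geodesics witnessing $\neg$\eqref{ggcc}, split the damping functional between the tube $\Gamma_{\rho_n,\epsilon_n,T}$ and its complement (where the Gaussian tail gives an $O(e^{-c\epsilon_n^2/h_n})$ contribution), and balance the three scales $h_n,\epsilon_n,\alpha_n$. One point to correct in the write-up: $\neg$\eqref{ggcc} only hands you some sequence $(\rho_n,\epsilon_n)$ along which $\alpha_n\to 0$ at an uncontrolled rate, so you cannot ``choose $\alpha_n$ to decay as fast as one wishes with respect to $(h_n,\epsilon_n)$''; rather, the genuinely free parameter is $h_n$, and your two conditions $\alpha_n\ll(\sqrt{h_n}/\epsilon_n)^{d-1}$ and $\sqrt{h_n}\ll\epsilon_n$ amount to the nonempty window $\alpha_n^{2/(d-1)}\epsilon_n^2\ll h_n\ll\epsilon_n^2$ --- exactly what the paper exploits with the explicit choice $h_n=\alpha_n^{1/(d-1)}\epsilon_n^2$, which makes the tube contribution of order $\alpha_n^{1/2}\to 0$.
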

\begin{proof}
The proof of this results relies on geometric optics constructions (with complex phases) for the wave equation by Ralston~\cite[Section 2.1]{Ra82} that we recast in our wave equation context.
\begin{prop}\label{ralston}
Let  $M$ be a compact manifold without boundary endowed with a smooth metric $g$ and a smooth density $\kappa$. Let 
\begin{equation}\label{delta}
\Delta = \text{div}_{\kappa} \nabla_g 
\end{equation}
be the Laplace operator.  Let $(t_0, x_0,\tau_0= \frac 1 2, \xi_0) \in \text{Char} (\partial_t^2 - \Delta)$, the characteristic manifold. Denote by $(t(s)= t_0 +s, \tau_0= \frac 1 2 , \gamma(s), \xi(s)) $ the bicharacteristic starting from $(t_0, x_0,\frac 1 2, \xi_0)$. 
Then for any $N>0$ there exists a family of approximate solutions $v_{h,N}(t,x)$ defined for $0< h<h_0$ to the wave equation 
\begin{equation}\label{approximate}
(\partial_t^2 - \Delta) v_{h,N}= \mathcal{O} ( h^N)_{L^2(M)}, E(v_{h,N}) = \int_M ( |\nabla_g v_{h,N}|^2 + |\partial_t v_{h,N}|^2 ) \kappa dx =1 + o(h)
\end{equation} 
with error terms locally uniformly controlled in time, and which are (locally in time) exponentially localised in $\mathbb{R}_t \times M$ near $(t(s), x(s)$: 
\begin{equation}  \label{upper} 
\begin{gathered}
\forall T> 0, \exists C, \alpha >0; \forall t\in [0,T], \forall x \in M,\\
 \bigl( |v_{h,N} | + |h \nabla_x v_{h,N}| + |h \partial_t v_{h,N}|\bigr) (t(s),x)  \leq C h^{1 - \frac{ d}{ 4}}e^{- \alpha \frac{ \text{dist} ( x, x(s) )^2} {h}}
 \end{gathered}
\end{equation}
and consequently, if we denote by $\Gamma_T=\gamma([0,T]) $ the image of the geodesic in $M$,
\begin{equation}  \label{upperbis} 
\begin{gathered}
\forall T> 0, \exists C, \alpha >0;\forall x \in M,\\
 \int_0^T \bigl(|\nabla_x v_{h,N}|^2 + |\partial_t v_{h,N}|^2\bigr) (t,x) dt \leq C h^{- \frac{ d-1}{ 2}}e^{- \alpha \frac{ \text{dist} ( x, \Gamma_T) ^2} {h}}.
 \end{gathered}
\end{equation}
\end{prop}
Let us first show how we can deduce Theorem~\ref{theorem.4} from Proposition~\ref{ralston}. We are going to test the observation estimates~\eqref{eq.nonamortie} on such sequences of solutions.

{Let us we assume that~\eqref{ggcc} does not hold. Fix $T>0$. Then there exists $\eta_n = (x_n, \xi_n) \in S^* \mathbb{T}^d$, $\epsilon _n \rightarrow 0$ such that, with 
$$\lim_{n\rightarrow +\infty} \kappa_n =0\ ,\  \kappa_n :=  \frac{1} { \epsilon _n ^{ d-1}} \int_{\Gamma_{\eta_n, \epsilon_n ,T}}  a(x)dx .$$
Let $t_n =0$.
 Let $\rho_n = (t_n, \tau_n = \frac 1 2, x_n,\xi_n)$, fix $N=1$ (we actually need a crude version of Ralston construction) and $v_{h}^n$ be the approximate solution of the wave equation constructed in Proposition~\ref{ralston}, with initial point $\rho_n $. We shall use that the family of solutions which depends on two parameters $h$ and the initial point in the cotangent bundle is uniformly controled with respect to this latter parameter, which will follow from the proof of Proposition~\ref{ralston} given below. Since, according to Proposition~\ref{ralston}, 
we have
$$\| (v_{h}^n, \partial_t v_{h}^n)\mid_{t=0}\|_{H^1\times L^2}= 1+ o(1)_{n\rightarrow + \infty},$$ and according to~\eqref{approximate} and Duhamel formula, $w_h^n$ is, modulo a $\mathcal{O}(h)$ error in energy space, equal to the solution to the exact wave equation with the same initial data,  to show that uniform stabilisation does not hold, it is now enough to show that for a properly chosen sequence  $h_n \rightarrow 0$
 \begin{equation}
 \lim_{n\rightarrow + \infty} \int_{0}^T \int_M a(x)  |\partial_t v_{h_n,n}|^2 dx dt =0
 \end{equation}
  Extracting a subsequence, we can assume that the sequence of initial points $\rho_n$ converges to $\rho= ( t_0=0, \xi_0= \frac 1 2, x_0,\xi_0)$. 
 The only point we shall use about our approximate solutions is the upper bound~\eqref{upperbis}, which implies
 }
   
\begin{multline}\label{eq.estim}
\int_{0}^T \int_M a(x)  |\partial_t v_{h_n}|^2 dx dt = \int_{0}^T \int_{M} a(x)  |\partial_t v_{h_n}|^2 dx dt \\
\leq C  \int_{\Gamma_{\rho_n, \epsilon_n ,T}} a(x)  h_n^{- \frac{ d-1} 2} e^{- \alpha \frac{ \text{ dist } (x, \Gamma_{\rho_n, T}) ^2 } {h_n}} dx+\int_{\Gamma^c_{\rho_n, \epsilon_n ,T}} a(x) 
h_n^{- \frac{ d-1} 2} e^{- \alpha \frac{ \text{ dist } (x, \Gamma_{\rho_n, T}) ^2 } {h_n}} dx .
\end{multline}
The contribution of the first term is bounded by 
$$ C h_n ^{- \frac {d-1} 2}\int_{\Gamma_{\rho_n, \epsilon_n ,T}} a(x)dx \leq \kappa_n \bigl( \frac{ \epsilon _n^2 } { h_n }\Bigr) ^{\frac {d-1} 2}.
$$ 

On the other hand, the second term is bounded by 
\begin{equation}\label{courbe}
 \| a \|_{L^\infty} \int_{\Gamma^c_{\rho_n, \epsilon_n ,T}} 
h_n^{- \frac{ d-1} 2} e^{- \alpha \frac{ \text{ dist } (x, \Gamma_{\rho_n, T}) ^2 } {h_n}} dx .
\end{equation}
To estimate this integral we work in (a finite set of) coordinate systems. In such local coordinates, $\Gamma_{\rho_n, T}$ is a finite union of smooth arcs of geodesics (because the geodesic can self intersect) and it is enough to estimate~\eqref{courbe} where we replaced $ \text{ dist } (x, \Gamma_{\rho_n, T})$ by the distance to any such arc. We can change again coordinates such that locally the considered arc of geodesic is 
$$ \{ (y_1=0, y'\in \mathbb{R}^{d-1})\},$$
and the distance to the arc $\gamma_n$ satisfies 
$$ \exists C>0; \frac 1 C |y'| \leq \text{dist} (y, \gamma_n)\leq C |y'|$$
This leads to the estimate (if $\epsilon _n \geq C^2 \sqrt{h_n}$)
\begin{multline}
\int_{\text{dist} (x, \gamma) \geq \epsilon_n} 
h_n^{- \frac{ d-1} 2} e^{- \alpha \frac{ \text{ dist } (x, \gamma_n) ^2 } {h_n}} dx 
= \int_{|x'| \geq \frac{\epsilon_n } C} h_n^{- \frac{ d-1} 2} e^{- \alpha \frac{|x'|^2 } {C h_n}} dx \\
 =C'\int_{|y'| \geq \frac{\epsilon_n } {C^2 \sqrt{h_n}}} e^{-  \alpha |y'|^2 } dy' \leq C' e^{-\alpha \frac{\epsilon_n ^2} {C^4 h_n}}
\end{multline}

We now choose $h_n= \kappa_n ^{\frac 1 {d-1}} \epsilon _n^2  \rightarrow 0$ such that 
\begin{equation}
\frac{ \epsilon_n ^2} {h_n } = \kappa_n ^{-\frac 1 {d-1}} \rightarrow + \infty, \qquad \kappa_n \bigl(\frac { \epsilon_n ^2} {h_n }\bigr)^{ \frac {d-1} 2}= \kappa_n ^{\frac 1 2}  \rightarrow 0.
\end{equation}
This choice implies 
$$\int_{0}^T \int_M a(x)  |\partial_t w_{h_n}|^2 dx dt = o(1)_{n\rightarrow + \infty},$$
which contradicts~\eqref{eq.nonamortie} because the energy of the initial data $(w_{h_n}, \partial _t w_{h_n})$ is constant and nonzero. This completes the proof of Theorem~\ref{theorem.4}.\end{proof}

Let us now come back to the proof of Proposition~\ref{ralston}. This is basically done in~\cite[Section 2.1]{Ra82}. The idea is to define oscillating solutions (phase and symbol) by constructing the germs on the bicharacteristic curve.  
Let $\rho_0= (t_0, \tau_0 = \frac 1 2, x_0, \xi_0)$ a point in the characteristic variety of the wave equation
$$ \text{Char}= \{ (t, x,\tau,\xi) \in T^*(\mathbb T^d) :  |\tau|^2 = |\xi|^2=1 \}.$$
Let $\Gamma= \{ t(s), \gamma(s), \tau (s) = \frac 1 2, \xi (s)) $ be the bicharacteristic curve issued from $\rho_0$.
 For any $T<+\infty$, we can choose  systems along the geodesic $\gamma$  and get an immersion 
$$ i:  ( - \epsilon, T+ \epsilon)\times B(0, \epsilon) \subset  \mathbb{R}\times \mathbb{R}^{d-1}\rightarrow M, $$
along which the bicharacteristic takes the form 
$$\gamma(s) = (t=s,x_1=s,x'= 0, \tau = \frac 1 2, \xi_1 = - \frac 1 2, \xi' =0),$$
 which allows to reduce the analysis to $\mathbb{R}^d$.  In this coordinate system,~\eqref{delta} takes the form 
 $$ \Delta = \frac 1 {\kappa(x)} \sum_{i,j} \frac{ \partial } {\partial x_i} g^{i,j} (x) \kappa (x) \frac{ \partial}{ \partial x_j}.$$

We now write $y = (t,x)$ and seek approximate solutions of the wave equation with the form 
\begin{equation}\label{quasi-mode}
 u_h(t,x) = e^{\frac i h \psi(t,x)} \sigma(t,x,h), 
 \end{equation}
where $\sigma(t=0)=\sigma_0\in C^\infty_0 ( \mathbb{R}^d)$ has sufficiently small compact support near $0$. 
Applying the operator $\partial_t^2 - \Delta_x$ we get 
\begin{equation}\label{eq.gaussien}
\begin{aligned}
(\partial_t^2 - \Delta_x) u_h = & - \frac{ 1} {h^2} \Bigl ( (\partial_t \psi )^2 - \sum_{1\leq k,j\leq n}g^{k,j} (x) \partial_{x_k} \Psi \partial_{x_j} \Psi\Bigr) \sigma e^{\frac i h \psi}\\ 
& + e^{\frac i h \psi}\frac i h \Bigl ( 2 \partial_t \psi \partial_t \sigma   - 2 \sum_{1\leq k,j\leq n} g^{k,j} (x) \partial_k \psi \partial_j \sigma - \frac 1 {\kappa} \sum_{1\leq k,j\leq n} \partial_k ( g^{k,j} \kappa) (x)  \sigma \partial_j \psi     \Bigr)  \\
&+ e^{\frac i h \psi}\Bigl( \partial^2_t -  \Delta \psi\Bigr)\sigma 
\end{aligned}
\end{equation}
In~\cite[Section 2.1]{Ra82}, Ralston then shows that provided that 
$$ \psi (t(s), x(s) ) = t(s) - x_1 (s)+ cste\Leftrightarrow \partial_{t,x} \psi ( t(s), x(s) ) = ( \tau(s), \xi(s)), $$ and choosing 
\begin{equation}
 \Im \Bigl( \frac{ \partial^2 \psi} { \partial{x^2}} \Bigr)\mid_{\gamma(0)} \geq c\text{Id}, c>0
 \end{equation}
  it is possible to solve both the e\"ikonal equation 
$$p = \Bigl ( (\partial_t \psi )^2 - \sum_{1\leq k,j\leq n}g^{k,j} (x) \partial_{x_k} \Psi \partial_{x_j} \Psi\Bigr) =0$$
and the transport equation
$$T= \bigl ( 2 \partial_t \psi \partial_t \sigma   - 2 \sum_{1\leq k,j\leq n} g^{k,j} (x) \partial_k \psi \partial_j \sigma - \frac 1 {\kappa} \sum_{1\leq k,j\leq n} \partial_k ( g^{k,j} \kappa) (x)  \sigma \partial_j \psi     \bigr) )e^{\frac i h \psi}  +h \bigl( \partial^2_t -  \Delta \psi\bigr)\sigma =0, $$
with 
\begin{equation}\label{def-pos}
 \Im \Bigl( \frac{ \partial^2 \psi} { \partial{x^2}} \Bigr)\mid_{\gamma(s)} \geq c(s)\text{Id}, c(s)>0
 \end{equation}
to arbitrary large  order on the bicharacteristic $\gamma$ by choosing 
$$ \sigma = \sum_p h^p \sigma _p.$$ Here by solving to arbitrary large order, we mean that we can cancel an arbitrary large number of $(t,x)$ derivatives on $\gamma$.

On the torus $\mathbb{T}^n$, these constructions can be performed explicitely and we get 
\begin{equation}
\label{phase}
\psi(t,x)=   t- x_1  +i \bigl (( t- x_1)^2 + g(t) |x'|^2 \bigr) + O( |t- x_1|^3 + |x'|^3), 
\end{equation}
with $g$ solving 
$$ 2i g'(t) + 4 g^2 (t) =0 \Leftarrow g(t) = \frac{ g(0) } { 1- 2i t g(0)}\ ,\ {g(0):=1}\ .$$
Notice in particular that 
\begin{equation}\label{partie-reelle}
\Re (g(t) ) = \frac{ 1} {1+ 4t^2}>0
\end{equation}
and we can choose a symbol 
$$
 \sigma (t, x_1, x') =  \frac{\sigma_0 (t-x_1, x')}{ (1- 2it) ^{{\frac{n-1}2}}} + O(h) + O(|t-x_1| + |x'|). $$
Finally, it remains to cut off the symbol such constructed near the geodesic (taking benefit from~\eqref{def-pos}, we see that this troncature will add an exponentially small error), and to normalise by multiplying by 
$$c h^{1 - \frac d 4}$$ to ensure the normalisation of the energy in~\eqref{approximate} and the error bound~\eqref{upper}. We leave the details to the reader.

\subsection{Assumption~\texorpdfstring{\ref{geom} and~\eqref{ggcc}}{(GEOM) and (GGCC}}
On $2$-d tori and for dampings $a$ satisfying~\eqref{somme} we have
\begin{prop}\label{equiv}
 On a two dimensional torus $\mathbb{T}^2$, if the damping $a$ satisfies~\eqref{somme}, then \eqref{ggcc} is equivalent to Assumption~\ref{geom}.
 \end{prop}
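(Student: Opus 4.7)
My plan is to prove both implications by a combination of structural observations about geodesics on $\mathbb T^2$ and local tube area computations.

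The starting structural observation, already noted after Theorem~\ref{theorem.1}, is that any non-closed geodesic is dense in $\mathbb T^2$ and hence enters the interior of every polygon, while closed geodesics in rational directions with $p^2+q^2$ large enough also enter every polygon interior. Consequently the set of length-$T$ geodesics avoiding all polygon interiors is a finite union of one-parameter families (one per each of finitely many rational directions), within which only finitely many transverse positions actually touch polygon edges. Throughout I abbreviate
\[ f(\rho_0,\epsilon):=\frac{1}{\text{Vol}(\Gamma_{\rho_0,\epsilon,T})}\int_{\Gamma_{\rho_0,\epsilon,T}}a(x)\,dx, \]
and use that $\text{Vol}(\Gamma_{\rho_0,\epsilon,T})=2\epsilon T+O(\epsilon^2)$ uniformly in $\rho_0$ by the flat geometry.

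Assumption~\ref{geom}~$\Rightarrow$~\eqref{ggcc}. Fix $\rho_0$. If $\gamma_{\rho_0}$ enters the interior of $R_j$ on a segment of length $L>0$, then for small $\epsilon$ the tube $\Gamma_{\rho_0,\epsilon,T}$ contains a rectangle of area $\simeq 2\epsilon L$ inside $R_j$, whence $f(\rho_0,\epsilon)\geq La_j/T-O(\epsilon)$. If instead $\gamma_{\rho_0}$ follows left edges of total length $L_L$ of $R_{j_1}$ and right edges of total length $L_R$ of $R_{j_2}$, the tube contains two one-sided strips of area $\simeq\epsilon L_L$ and $\simeq\epsilon L_R$ in the respective polygons, giving $f(\rho_0,\epsilon)\geq(a_{j_1}L_L+a_{j_2}L_R)/(2T)-O(\epsilon)$. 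For uniformity in $\rho_0$ I argue by contradiction: if $\rho_n\in S^*\mathbb T^2$ and $\epsilon_n\to 0$ satisfy $f(\rho_n,\epsilon_n)\to 0$, extract $\rho_n\to\rho_*$. For any $\gamma_{\rho_n}$ at signed perpendicular distance $\delta_n$ from an edge along which $\gamma_{\rho_*}$ tangentially follows a polygon $R_j$ over length $\sim L$, the tube of radius $\epsilon_n$ captures width $\min(2\epsilon_n,\epsilon_n+|\delta_n|)\gtrsim\epsilon_n$ of $R_j$ whenever $|\delta_n|\leq\epsilon_n$, contributing $\gtrsim\epsilon_n L a_j$ to $\int a$. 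Since by Assumption~\ref{geom} the limit $\gamma_{\rho_*}$ either crosses an interior transversely or has polygons on \emph{both} sides, at least one such lower bound persists for large $n$ (a single-sided perturbation can undo at most one of the two contributions), contradicting $f(\rho_n,\epsilon_n)\to 0$.

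Failure of Assumption~\ref{geom}~$\Rightarrow$~failure of~\eqref{ggcc}. Fix $T>0$ and let $\gamma$ be a length-$T$ geodesic avoiding all polygon interiors and, say, lacking any right-side polygon following. If $\gamma$ follows no polygon at all, it meets $\supp(a)$ only at isolated vertex points and $f(\gamma,\epsilon)=O(\epsilon)\to 0$. Otherwise let $\delta_0$ be the minimum positive distance from $\gamma([0,T])$ to any polygon lying strictly to the right of $\gamma$; for any $\delta\in(0,\delta_0/2)$, the parallel translate $\gamma_\delta$ of $\gamma$ by $\delta$ to the right lies at distance $\geq\delta$ from every polygon on its left (it has moved away from the left-side polygons that $\gamma$ was tangent to) and at distance $\geq\delta_0-\delta\geq\delta$ from every polygon on its right. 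Hence for $\epsilon<\delta$ the tube $\Gamma_{\gamma_\delta,\epsilon,T}$ meets $\supp(a)$ only in $O(\epsilon^2)$ area near isolated vertex encounters, yielding $f(\gamma_\delta,\epsilon)\to 0$ and falsifying \eqref{ggcc}.

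The main obstacle is the uniformity step in the forward direction: controlling $f(\rho_n,\epsilon_n)$ along sequences $\rho_n\to\rho_*$ approaching one of the finitely many exceptional tangent geodesics. It is resolved by the width-capture estimate above — a tube of radius $\epsilon_n$ around a geodesic at signed distance $\delta_n$ with $|\delta_n|\leq\epsilon_n$ from a polygon edge still intercepts $\gtrsim\epsilon_n$ of the polygon across the whole tangency segment — combined with the fact that Assumption~\ref{geom} furnishes two independent sides of contribution at $\rho_*$, so that a single small translation of the base point cannot simultaneously cancel both.
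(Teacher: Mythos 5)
Your proof of the backward implication (failure of Assumption~\ref{geom} implies failure of~\eqref{ggcc}) has a gap in the second case, and it is precisely the case the paper is careful about. You translate the exceptional geodesic $\gamma$ to the right by $\delta$ and assert that $\gamma_\delta$ ``lies at distance $\geq \delta$ from every polygon on its right,'' concluding that the tube of radius $\epsilon<\delta$ meets $\supp(a)$ only in $O(\epsilon^2)$ area near isolated vertices. This is false: $\gamma$ may touch a polygon $R_j$ on the right \emph{only at a corner} (this is exactly what happens in Figure~\ref{fig.1}.c, where the vertical geodesic grazes the vertex of the triangle). Such a polygon is not ``strictly to the right'' so your $\delta_0$ ignores it, yet translating to the right by $\delta$ drives $\gamma_\delta$ through the interior of $R_j$ on a segment of length $\sim c\delta$. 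Consequently $\int_{\Gamma_{\gamma_\delta,\epsilon,T}}a \sim c\delta\epsilon$ (not $O(\epsilon^2)$), so $f(\gamma_\delta,\epsilon)\to c\delta$ as $\epsilon\to 0$ with $\delta$ fixed, which does not vanish. The paper's proof of Proposition~\ref{equiv} gets this right: it records $\int_{\Gamma_{\rho_\sigma,\epsilon,T}}a\sim c\sigma\epsilon$ and then performs the double limit, first $\epsilon\to 0$ with $\sigma$ fixed, then $\sigma\to 0$, to produce a sequence along which the ratio tends to $0$. You need that extra limit in $\delta$, and you need to account for the corner contribution before you can take it.

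On the forward implication (Assumption~\ref{geom} implies~\eqref{ggcc}) you take a genuinely different route from the paper: you attempt a direct tube-area lower bound with a compactness argument over $S^*\mathbb{T}^2$. The paper instead simply chains Theorem~\ref{theorem.1} (Assumption~\ref{geom} $\Rightarrow$ stabilisation) with Theorem~\ref{theorem.4} (stabilisation $\Rightarrow$~\eqref{ggcc}), so this direction is free given the rest of the paper. A direct geometric proof would be a nice, self-contained alternative, but your sketch is not yet rigorous: the ``signed perpendicular distance $\delta_n$'' only makes sense when $\gamma_{\rho_n}$ has the same direction as $\gamma_{\rho_*}$, whereas a sequence $\rho_n\to\rho_*$ in $S^*\mathbb{T}^2$ will generically have a small angular deviation, so $\gamma_{\rho_n}$ is transverse to the edges $\gamma_{\rho_*}$ follows and your width-capture estimate does not directly apply. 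The informal slogan ``a single-sided perturbation can undo at most one of the two contributions'' is the right intuition, but it would need to be quantified uniformly in both the transverse offset and the angular deviation to close the argument.
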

 \begin{proof}
 Since Assumption~\ref{geom} implies uniform stabilisation (Theorem~\ref{theorem.1}) which in turn implies~\eqref{ggcc} (Theorem~\ref{theorem.4}), is is enough to show  that~\eqref{ggcc} implies Assumption~\ref{geom}. 
 
Let us assume~\eqref{ggcc}. If Assumption~\ref{geom} was not satisfied, then there would, for any $T>0$ exist a geodesic curve $\gamma$ of length $T$ which either does not encounter $ \mathcal{R}= \cup_{j=1} ^N \overline{R_j}$, or does encounter $\mathcal{R}$ only at corners, or encounter  $\mathcal{R}$, only on the left (or only on the right). In the first case, then by compactness, the geodesic curve remains at distance $\epsilon _0$ of $\mathcal{R}$, and consequently for $0< \epsilon <\epsilon _0$, then 
   $$\int_{\Gamma_{\rho_0, \epsilon,T}}  a(x)dx=0.$$
   In the second case (see  checkerboard in Figure~\ref{fig.1}.b), by compactness, the geodesic curve encounters only a finite number of corners, and consequently ($d=2)$
   $$\int_{\Gamma_{\rho_\sigma, \epsilon,T}}  a(x)dx=\mathcal{O} ( \epsilon ^2),$$
   with a constant $c>0$ depending on the angles of the corners, while 
   $$ {\text{Vol}( \Gamma_{\rho_0, \epsilon,T})} \sim C \epsilon 
   $$
   which implies that~\eqref{ggcc} does not hold. 
   In the last case (see the right checkerboard in Figure~\ref{fig.1}.c), let us consider the family of geodesics $ \gamma_\sigma  =\{\gamma_{\rho_\sigma} (s), s\in (0,T)\} , \sigma \in [0,1),$ parallel on the right to $ \gamma_0 =\{\gamma_{\rho_0} (s), s\in (0,T)\} $ (i.e if $\rho_0 = (X_0, \Xi_0)$, then $\rho_\sigma= X_0+ \sigma \Xi_0^\perp$, where $\Xi_0^\perp$ is the unit vector orthogonal to $\Xi_0$, pointing on the right of $\gamma_0$). 
 Since on the right $\gamma_0$ encounters no side of any rectangle $R_j$, it may encounter only (finitely many) corner points. As a consequence, for any $\sigma>0$ sufficiently small, and $0< \epsilon \ll \sigma$,  
 $$\int_{\Gamma_{\rho_\sigma, \epsilon,T}}  a(x)dx \sim c \sigma \epsilon \qquad ( \epsilon \rightarrow 0),$$
 $$\text{Vol}( \Gamma_{\rho_\sigma, \epsilon,T}) \sim C \epsilon, \qquad ( \epsilon \rightarrow 0).$$
 We deduce that 
 $$\lim_{\epsilon \rightarrow 0} \frac{ 1} {\text{Vol}( \Gamma_{\rho_\sigma, \epsilon,T})}\int_{\Gamma_{\rho_\sigma, \epsilon,T}}  a(x)dx = c \sigma,$$
 letting $\sigma \rightarrow 0$  shows that~\eqref{ggcc} does not hold.  
 \begin{figure}[h]
 \begin{center}
 \begin{tikzpicture}[scale=1.4]
\fill[blue] (-1,1) -- (-.5,1) -- (-.5,-1) -- (-1,-1) --  cycle;
\fill[blue] (1,1) -- (1,-1) -- (0, -1)--(-.5,0)--(0,1) -- cycle;
\draw[very thick, blue] (-1,-1) -- (-1,1) -- (1,1)--(1,-1)--cycle;
\draw[thick, red] (-0.5,-1) -- (-0.5,1);
\draw[very thick, red, dashed] (-0.4,-1) -- (-0.4,1);
\draw (-0.55, 1.1) node[above] {$\gamma$};
\draw (-0.35, 1.1) node[above] {$\gamma_\sigma$};
\draw (-0.35, 1.1) node[above] {$\gamma_\sigma$};
\draw[thick, ->] (-1.4,0) -- (1.4,0);
\draw[thick, ->] (0,-1.1) -- (0,1.4);
\draw (1.1,0) node[below] {$1$};
\draw (1.4,0) node[below] {$x$};
\draw (-.58,-1.1) node[below] {$-\frac 1 2$};
\draw (0, -1.15) node[below] {$0$};
\draw (0,1.4) node[left] {$y$};

\end{tikzpicture}
\end{center}
\caption{}
\end{figure}
  \end{proof}
  \appendix
  
     \section{Resolvent estimates and stabilisation}
  In this appendix, we collect a few classical results on resolvent estimates.
  \subsection{Resolvent estimates and stabilisation}  It is classical~\cite{Ge78} that stabilisation or observability of a self adjoint evolution system is equivalent to resolvent estimates (see also~\cite{BuZw03, Mi12,AL14}). For completness we shall give below a proof (only the fact that resolvent estimates imply stabilisation). 
 \begin{prop}\label{prop.resolvent}
 Consider a strongly continuous semi-group $e^{tA}$  on a Hilbert space $H$, with  infinitesimal generator $A$ defined on $D(A)$. 
  The following two properties are equivalent
  \begin{enumerate}
  \item \label{un}There exists $C, \delta>0$ such that the resolvent of $A$, $(A-\lambda)^{-1}$ exists for $\Re \lambda \geq  - \delta $\ and satisfies 
 $$ \exists C>0; \forall \lambda \in \mathbb{C}^\delta = \{ z \in \mathbb{C}; \Re z {\ge } - \delta\}, \| (A- \lambda)^{-1}\|_{\mathcal{L} ( H)}  \leq C.$$
\item \label{deux}There exists $M, \delta>0$ such that for any $t>0$
 $$ \| e^{tA} \|_{\mathcal{L} (H)} \leq M e^{- \delta t}.$$
 \end{enumerate}
 \end{prop}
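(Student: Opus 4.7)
The nontrivial direction is $(\ref{un}) \Rightarrow (\ref{deux})$. My plan is to prove an exponentially weighted $L^2$-in-time bound on the semigroup via Plancherel's theorem combined with the resolvent hypothesis, then deduce pointwise exponential decay by a standard semigroup argument. Any $C_0$-semigroup satisfies $\|e^{tA}\|\leq M_0 e^{\omega_0 t}$ for some $\omega_0\in\mathbb{R}$, $M_0\geq 1$, so the Laplace transform identity $(\mu-A)^{-1}x=\int_0^\infty e^{-\mu t}e^{tA}x\,dt$ holds for $\Re \mu>\omega_0$. Fixing $\sigma_0>\omega_0$, the Hilbert-space-valued Plancherel identity applied to $t\mapsto e^{-\sigma_0 t}e^{tA}x\mathbf{1}_{t\geq 0}$ (whose Fourier transform in $t$ equals $\tau\mapsto (\sigma_0+i\tau-A)^{-1}x$) yields
\[
\int_0^\infty e^{-2\sigma_0 t}\|e^{tA}x\|^2\,dt=\frac{1}{2\pi}\int_{-\infty}^\infty\|(\sigma_0+i\tau-A)^{-1}x\|^2\,d\tau \leq C_0 \|x\|^2.
\]

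Fix $0<\delta'<\delta$. The resolvent identity
\[
(-\delta'+i\tau-A)^{-1}=(\sigma_0+i\tau-A)^{-1}+(\sigma_0+\delta')(-\delta'+i\tau-A)^{-1}(\sigma_0+i\tau-A)^{-1},
\]
combined with hypothesis~(\ref{un}) which guarantees $\|(-\delta'+i\tau-A)^{-1}\|_{\mathcal{L}(H)}\leq C$ uniformly in $\tau\in \mathbb{R}$, transfers the $L^2_\tau$-bound from abscissa $\sigma_0$ down to $-\delta'$:
\[
\int_{-\infty}^\infty\|(-\delta'+i\tau-A)^{-1}x\|^2\,d\tau\leq C_1\|x\|^2.
\]

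Next I invert Plancherel at the abscissa $-\delta'$. Since $\mu\mapsto(\mu-A)^{-1}x$ is holomorphic on $\{\Re \mu>-\delta\}$ with uniformly bounded $L^2_\tau$-norms on vertical lines throughout the strip $-\delta'\leq\Re \mu\leq\sigma_0$, a Hilbert-space-valued Paley--Wiener argument (shifting contours via Cauchy's theorem for $H$-valued holomorphic functions) identifies the $L^2_\tau$-norm of $(\sigma+i\tau-A)^{-1}x$ with $2\pi$ times the $L^2_t$-norm squared of $t\mapsto e^{-\sigma t}e^{tA}x\mathbf{1}_{t\geq 0}$, uniformly in $\sigma$ inside the strip. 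Taking $\sigma=-\delta'$ yields the weighted bound
\[
\int_0^\infty e^{2\delta' t}\|e^{tA}x\|^2\,dt\leq C_1\|x\|^2.
\]
I expect this step to be the main technical obstacle: the $L^2_t$-integrability of $e^{\delta' t}e^{tA}x$ is essentially what we want to prove, so care is needed to justify the analytic continuation. Practically, one works first with $x\in D(A^2)$, where the identity $(\mu-A)^{-1}x=\mu^{-1}x+\mu^{-2}Ax+\mu^{-2}(\mu-A)^{-1}A^2x$ provides enough decay of the resolvent at infinity to make the contour shifts elementary, and then extends by density.

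Finally, pointwise exponential decay is recovered from the weighted $L^2_t$-bound by a standard trick. For $t\geq 1$ and $s\in[t-1,t]$, the semigroup property gives $\|e^{tA}x\|\leq M_0 e^{|\omega_0|}\|e^{sA}x\|$; squaring, integrating in $s$, and invoking the weighted bound leads to $\|e^{tA}x\|^2\leq C e^{-\delta'' t}\|x\|^2$ for any $\delta''\in(0,\delta')$, which combined with the trivial continuity bound for $t\in[0,1]$ gives~(\ref{deux}).
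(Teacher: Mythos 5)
Your proof is correct and follows essentially the same strategy as the paper's: convert the uniform resolvent bound on a left half-plane into an exponentially weighted $L^2_t$ bound on the trajectory via Plancherel and a contour shift, then bootstrap to pointwise exponential decay from that $L^2_t$ bound using the semigroup property. The only technical divergence is in how the contour shift is justified: the paper multiplies $e^{t(A-\omega)}u_0$ by a smooth cutoff in time so that the source term $v=\chi'(t)e^{t(A-\omega)}u_0$ is compactly supported (making $\widehat{v}$ entire and the shift immediate), whereas you rely on the resolvent expansion for $x\in D(A^2)$ to supply the decay in $\tau$ needed for the Cauchy rectangle argument, then extend by density --- two standard workarounds for the same Paley--Wiener subtlety.
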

 \begin{proof}
 Let us first prove that \eqref{deux} implies~\eqref{un}. We start with the following resolvent equality (always true for for $\Re \lambda \ll 0$),
 $$ (A- \lambda)^{-1}f = - \int_0 ^{+\infty} e^{t(A- \lambda) } f dt,
 $$
 and we deduce that if $\| e^{tA} \| \leq C e^{ -\beta t}$, \eqref{un} is satisfied  for any $\delta < \beta$. 
 To prove that~\eqref{un} implies~\eqref{deux}, for $u_0 \in D(A)$, and $\chi \in C^\infty( \mathbb{R})$ {equal to $0$ for $t\leq -1$ and to $1$ for $t \geq 0$}, consider $$u(t)= \chi (t) e^{t(A-\omega) }u_0.$$ For $\omega$ large enough, $u$ belongs to $L^\infty(\mathbb{R}; H)$, because strongly continuous semi-groups of operators satisfy 
 $$ \exists C,c >0; \forall t>0, \| e^{tA}\| \leq C e^{ct}, $$ and $u$ satisfies 
 $$ (\partial_t + \omega -A) u (t)= \chi'(t) e^{t(A-\omega )} u_0 =: v(t).$$
 Taking Fourier transforms in the time variable, we get 
 \begin{equation}\label{fourier}
  ( i \tau + \omega - A) \widehat{u}(\tau ) = \widehat{v}(\tau).
  \end{equation}
 Since $v(t)$ is supported in $t\in [-1,0]$, the r.h.s. in~\eqref{fourier} is holomorphic and bounded in any domain 
 $$\mathbb{C}_\alpha= \{ \tau \in \mathbb{C} : \Im \tau \geq \alpha, \alpha \in \mathbb{R}\}.$$
 From the assumption on the resolvent, we deduce that $\widehat{u}$ admits an holomorphic extension to $\{ \tau: \Im \tau {\leq } \delta + \omega\}$ which satisfies 
 $$ \| \widehat{u} ( \tau ) \|_{H} \leq C \|\widehat{v}(\tau) \|_{H} . $$
 We deduce that 
 \begin{multline}
  \| e^{(\omega + \delta)t } u \|_{L^2( \mathbb{R}_t; H))} = \| \widehat{u} ( \tau + i (\omega+ \delta))\|_{L^2( \mathbb{R}_\tau; H) } \leq C \|\widehat{v}( \tau + i (\omega+ \delta))\|_{L^2( \mathbb{R}_\tau; H) }\\
  \leq C \| e^{(\omega + \delta)t }v\|_{L^2( \mathbb{R}_t; H) }\leq C' \| u_0 \|_H.
  \end{multline}
 This implies exponential decay { of $e^{tA}u_0$} in the $L^2_t$ norm, { with the weight $e^{\delta t}$}. Now consider $w(t) :=\chi(t-T) e^{tA}u_0$, which satisfies 
 $$  (\partial_t -A) w = \chi'(t-T) e^{tA} u_0, w\mid_{t=T-1}=0.$$
 
From Duhamel formula, we deduce
$$ w(T) = \int_{T-1}^T e^{(T-s)A}  \chi'(t-T) e^{sA} u_0 ds,$$
and consequently (recall that the semigroup norm is locally bounded in time)
\begin{multline}
\| w(T)\|_{H}\leq \int_{T-1}^T \| e^{(T-s)A}\chi'(t-T) e^{sA} u_0\|_H ds\\\
\leq C \sup_{\sigma\in [0,1]} \| e^{\sigma A}\|_{\mathcal{L}(H)} \int_{T-1}^T \| e^{sA} u_0\|_H 
 \leq C' e^{- \delta T} \| e^{\delta s } e^{sA} u_0\|_{L^2(T-1,T); H}\\ \leq C'' e^{- \delta T} \| u_0\|_{H}.
\end{multline}

 \end{proof}

 \subsection{Semi-groups for damped wave equations}\label{sec.B.2.1}
 The solution to~\eqref{eq.amortie} is given very classically by 
 $$ \begin{pmatrix} u \\ \partial_t u \end{pmatrix} = e^{tA} \begin{pmatrix} u_0 \\ u_1 \end{pmatrix}, \qquad A= \begin{pmatrix} 0 & \text{Id} \\ \Delta - m & -a \end{pmatrix}$$
 where $A$
 is defined on $\mathcal{H}=H^1(M) \times L^2(M)$ with domain $H^2(M) \times H^1(M)$. When $m>0$, since 
 $$ E(u) = \| u\|_{H^1} ^2 + \| \partial_t u \|_{L^2}^2,$$ to study the decay of the energy, we can apply directly the caracterization given by Proposition~\ref{prop.resolvent}. When $m=0$, the semi-group $e^{tA}$ is no more a contraction semi-group on $H^1\times L^2$ (because the energy~\eqref{energy} does not control the $H^1$ norm). The main difference from the case $m=0$ and $m$ non trivial comes from 
 \begin{lemm}
 Assume that $0 \leq m\in L^\infty(M)$ and $m$ is not trivial ($\int_M m(x) dx >0$). Then the norms
 $$ \| u \|_{H^1} = \Bigl(\| \nabla_x u \|_{L^2}^2+ \| u \|_{L^2}^2 \bigr)^{1/2}, \qquad \| u\| = \sqrt{ E(u)} = \Bigl ( \| \nabla_x u \|_{L^2}^2+ \| m^{1/2} u \|_{L^2}^2 \bigr)^{1/2} $$
 are equivalent
 \end{lemm}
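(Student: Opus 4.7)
The plan is to prove the two bounds $\|u\| \leq C_1 \|u\|_{H^1}$ and $\|u\|_{H^1} \leq C_2 \|u\|$ separately. The first inequality is immediate since $m \in L^\infty(M)$ gives
\[
\|m^{1/2} u\|_{L^2}^2 \leq \|m\|_{L^\infty} \|u\|_{L^2}^2,
\]
so $\|u\|^2 \leq \max(1, \|m\|_{L^\infty}) \|u\|_{H^1}^2$. The content of the lemma is the reverse inequality, i.e.\ a Poincar\'e-type estimate
\[
\|u\|_{L^2}^2 \leq C\Bigl( \|\nabla_g u\|_{L^2}^2 + \|m^{1/2} u\|_{L^2}^2\Bigr),
\]
for $u \in H^1(M)$.

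I would prove this by a standard compactness/contradiction argument. Suppose the inequality fails. Then there is a sequence $(u_n) \subset H^1(M)$ with $\|u_n\|_{L^2} = 1$ and
\[
\|\nabla_g u_n\|_{L^2}^2 + \|m^{1/2} u_n\|_{L^2}^2 \to 0.
\]
In particular the sequence $(u_n)$ is bounded in $H^1(M)$. Since $M$ is a smooth compact Riemannian manifold (without boundary), the Rellich--Kondrachov embedding $H^1(M) \hookrightarrow L^2(M)$ is compact, so up to extraction $u_n \to u_\infty$ strongly in $L^2(M)$ and weakly in $H^1(M)$. Strong convergence gives $\|u_\infty\|_{L^2}=1$, while lower semicontinuity of the $L^2$ norm of the gradient with respect to weak convergence yields $\|\nabla_g u_\infty\|_{L^2} = 0$. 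Since $M$ is connected, this forces $u_\infty$ to be a (nonzero) constant function.

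For the second ingredient, I would argue that along a subsequence $m^{1/2} u_n \to m^{1/2} u_\infty$ in $L^2$: indeed $m^{1/2} \in L^\infty(M)$, so multiplication by $m^{1/2}$ is continuous on $L^2(M)$. Consequently
\[
\|m^{1/2} u_\infty\|_{L^2} = \lim_{n \to \infty} \|m^{1/2} u_n\|_{L^2} = 0,
\]
so $\int_M m |u_\infty|^2 \, d\mathrm{vol}_g = 0$. But $u_\infty$ is a nonzero constant $c$, hence $|c|^2 \int_M m \, d\mathrm{vol}_g = 0$, contradicting the nontriviality assumption $\int_M m(x) \, d\mathrm{vol}_g > 0$. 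This contradiction establishes the required inequality with some constant $C$, completing the proof of equivalence.

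The main (and only) subtle step is the compactness argument: one must combine the Rellich embedding for boundedness in $H^1$ with the fact that $m \in L^\infty$ passes continuously through $L^2$-convergence. The role of connectedness of $M$ is to ensure that the limit is a constant, which then interacts rigidly with the nontriviality hypothesis on $m$. No further geometric input is needed.
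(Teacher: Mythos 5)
Your proof is correct and takes essentially the same route as the paper's: both argue by contradiction, normalize a putative counterexample sequence, use the Rellich compact embedding $H^1(M)\hookrightarrow L^2(M)$ to extract an $L^2$-strong limit, conclude that the limit is a nonzero constant killed by $m$, and then invoke $\int_M m>0$ for the contradiction. The only cosmetic difference is the choice of normalization ($\|u_n\|_{L^2}=1$ in your argument versus $\|u_n\|_{H^1}=1$ in the paper's), which does not change the substance.
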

 Indeed, as for a classical  proof of Poincar\'e inequality, we proceed by contradiction to prove the only non trivial inequality ($\| u \|_{H^1} \leq C \| u\|$), and get a sequence 
 $(u_n) \in H^1(M)$ such that 
 $$  \| u_n\|_{H^1} =1, \qquad \| u_n \|\rightarrow_{n\rightarrow + \infty} 0$$
  By the weak compactness of the unit ball in $H^1$ we can extract a subsequence (still denoted by $(u_n)$ which converges weakly in $H^1$ (and hence because $M$ is compact strongly in $L^2$ to a limit $u \in H^1$. Since $\| u_n \|\rightarrow + \infty$ we get that the sequence actually converges strongly in $H^1$ and 
  $$ \| u\| =0 \rightarrow \nabla_x u =0, m^{1/2} u =0.$$
  We deduce that $u$ is constant in $M$ and since $\int_M m u =0$, we finally get $u =0$ which contradicts the fact that $\| u_n \|_{H^1} =1$ (and the strong convergence of $u_n$ to $0$).
  
 For $s= 1,2$,  $\dot{H}^s = H^s(M) / \mathbb{R}$ the quotient space of $H^s(M) $ by the constant functions, endowed with the norm 
 $$ \| \dot{u} \| _{\dot{H}^1} = \| \nabla u\|_{L^2}, \qquad \| \dot{u} \| _{\dot{H}^2} = \| \Delta u\|_{L^2}.$$
 We define the operator 
  $$ \dot{A}= \begin{pmatrix} 0 & {\Pi} \\ \dot{\Delta} & -a \end{pmatrix}$$ 
  on $\dot{H}^1 \times L^2$ with domain $\dot{H}^2 \times H^1$, where $\Pi$ is the canonical projection $H^1 \rightarrow \dot{H}^1$ and $\dot{\Delta}$ is defined by 
  $$ \dot{\Delta} \dot{u} = { \Delta u}$$ (independent of the choice of $u\in \dot{u}$). The operator $\dot{A}$ is maximal dissipative and hence defines a semi-group of contractions on $\dot{\mathcal{H}} = \dot{H^1} \times L^2$. Indeed for $U= \begin{pmatrix}\dot{u}\\ v\end{pmatrix}$, 
  $$ \Re \bigl( \dot{A} U, U\bigr)_{\dot{\mathcal{H}}} = \Re ( \nabla u, \nabla v)_{L^2} + ( \Delta u -a v, n)_{L^2} = - ( av,v) _{L^2},$$
  and 
  \begin{equation}
\begin{aligned} (\dot{A} -\text{Id} ) \begin{pmatrix}\dot{u}\\ v\end{pmatrix} = \begin{pmatrix}\dot f\\ g\end{pmatrix} &\Leftrightarrow \Pi v - \dot u = \dot f, \dot{\Delta} \dot{u} -(a+1) v = g\\
 & \Leftrightarrow \Pi v - \dot u = \dot f, \Delta v - (1+a) v = g + \Delta f \in H^{-1} (M)
  \end{aligned}
  \end{equation}
  and we an solve this equation by variational theory. Notice that this shows that the resolvent $(\dot{A}- \text{Id} )^{-1}$ is well defined and continuous from $\dot{H}^1 \times L^2$ to $\dot{H}^2 \times H^1$. 
  \begin{lemm}
  The injection $\dot{H}^2 \times {H}^1$ to $\dot{H}^1 \times L^2$ is compact
  \end{lemm}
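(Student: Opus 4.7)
The plan is to reduce the statement to two applications of the classical Rellich–Kondrachov theorem on the compact manifold $M$, one for each factor of the product. Given a bounded sequence $(\dot u_n, v_n)_n$ in $\dot{H}^2\times H^1$, I would treat the two coordinates independently and then pass to a common subsequence.

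For the second factor, the sequence $(v_n)$ is bounded in $H^1(M)$, and the Rellich embedding $H^1(M)\hookrightarrow L^2(M)$ is compact since $M$ is a compact manifold without boundary. Extract a subsequence $v_{n_k}$ that converges in $L^2(M)$.

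For the first factor the only mild issue is that the $\dot H^s$ norms are defined on the quotient, so one must choose good representatives. For each $n$ pick the representative $u_n\in H^2(M)$ of $\dot u_n$ with vanishing mean, $\int_M u_n\, d\mathrm{vol}_g=0$. The Poincaré–Wirtinger inequality on $M$ then yields $\|u_n\|_{L^2}\le C\|\nabla u_n\|_{L^2}$, and an integration by parts gives
\begin{equation*}
\|\nabla u_n\|_{L^2}^2 \;=\; -\langle u_n,\Delta u_n\rangle_{L^2} \;\le\; \|u_n\|_{L^2}\,\|\Delta u_n\|_{L^2}\ ,
\end{equation*}
hence $\|\nabla u_n\|_{L^2}\le C\|\Delta u_n\|_{L^2}=C\|\dot u_n\|_{\dot H^2}$. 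Elliptic regularity applied to the equation $\Delta u_n=f_n$ (with $f_n$ bounded in $L^2$ and $u_n$ of zero mean) then provides a uniform bound $\|u_n\|_{H^2}\le C'\|\dot u_n\|_{\dot H^2}$. Another application of Rellich to the compact embedding $H^2(M)\hookrightarrow H^1(M)$ gives a subsequence $u_{n_k}$ converging strongly in $H^1(M)$, and the continuity of the canonical projection $H^1(M)\to \dot H^1$ yields convergence of $\dot u_{n_k}$ in $\dot H^1$.

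A joint diagonal extraction then produces a subsequence along which both components converge in $\dot H^1\times L^2$, which is exactly the compactness of the injection. I do not anticipate a genuine obstacle in the argument; the only point that requires care is the choice of zero-mean representatives, which is precisely what makes $\|\cdot\|_{\dot H^s}$ equivalent to $\|\cdot\|_{H^s}$ on the subspace of zero-mean functions and thereby allows Rellich to be applied.
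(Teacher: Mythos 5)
Your proof is correct and follows exactly the route the paper indicates in its one-line remark: identify $\dot H^s$ with the subspace of zero-mean functions in $H^s$ (on which the $\dot H^s$ and $H^s$ norms are equivalent, via Poincar\'e--Wirtinger and elliptic regularity), and then invoke the compact Rellich embeddings $H^2\hookrightarrow H^1$ and $H^1\hookrightarrow L^2$ on the compact manifold $M$. You simply spell out the equivalence-of-norms step that the paper leaves implicit.
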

  This follows from identifying $\dot{H}^n$ with the kernel of the linear form $u \mapsto \int_{M} u$). 
  
  \begin{coro}\label{coroA.4}
  The operator $(\dot{A}- \text{Id} )^{-1} $ is compact on $ \dot{\mathcal{H}}$
  \end{coro}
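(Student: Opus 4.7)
The plan is very short: this corollary is a direct consequence of the two facts established immediately above it in the text. Specifically, the computations preceding the corollary show that $(\dot A - \mathrm{Id})^{-1}$ is well defined and continuous as a map
\[
(\dot A - \mathrm{Id})^{-1} : \dot{\mathcal{H}} = \dot H^1 \times L^2 \longrightarrow \dot H^2 \times H^1 ,
\]
via the variational solution of the elliptic problem $\Delta v - (1+a)v = g + \Delta f$ in $H^{-1}(M)$, and the preceding lemma asserts that the inclusion $\dot H^2 \times H^1 \hookrightarrow \dot H^1 \times L^2$ is compact.

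The plan is therefore to factor $(\dot A - \mathrm{Id})^{-1}$, viewed as an endomorphism of $\dot{\mathcal{H}}$, as the composition of the bounded operator above with the compact embedding of the previous lemma. Since the composition of a continuous linear map with a compact one is compact, this immediately yields the conclusion.

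There is no real obstacle; the only small point worth spelling out is the identification of $\dot H^n$ with the subspace $\{u\in H^n(M) : \int_M u = 0\}$ (the kernel of the mean-value functional), which makes the Rellich--Kondrachov theorem directly applicable and justifies the compact embedding $\dot H^2 \times H^1 \hookrightarrow \dot H^1 \times L^2$ used in the previous lemma. One should also check, for completeness, that the compactness statement on $\dot{\mathcal{H}}$ does not depend on the specific choice of spectral parameter $\lambda = 1$: once $(\dot A - \mathrm{Id})^{-1}$ is compact, the resolvent identity shows that $(\dot A - \lambda)^{-1}$ is compact for every $\lambda$ in the resolvent set of $\dot A$, which will be used later when applying analytic Fredholm theory.
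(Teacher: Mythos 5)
Your proof is correct and follows exactly the paper's route: you factor $(\dot A-\mathrm{Id})^{-1}$ through the bounded map $\dot{\mathcal H}\to\dot H^2\times H^1$ produced by the variational argument and compose with the compact embedding from the preceding lemma. The added remarks on the identification of $\dot H^n$ with the zero-mean subspace and on the resolvent identity are harmless and consistent with what the paper uses later.
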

  On the other hand, it is very easy to show that for $(u_0, u_1) \in H^1\times L^2$, 
  $$\begin{pmatrix} \Pi & 0 \\ 0 & \text{Id} \end{pmatrix} e^{tA} = e^{t\dot{A}} \begin{pmatrix} \Pi & 0 \\ 0 & \text{Id} \end{pmatrix}, $$ and consequently, stabilisation is equivalent to the exponential decay (in norm) of $e^{t\dot{A}}$ (and consequently, according to Proposition~\ref{prop.resolvent} equivalent to resolvent estimates for $\dot{A}$). 
    \subsection{Reduction to high frequency observation estimates}
  In this section, we show that for $m\geq 0$, stabilisation is equivalent to semi-classical observation estimates (see~\cite{Mi12}).
  \begin{prop}\label{prop.B.2} Assume that $0\leq a\in L^\infty$ is non trivial ($\int_{M} a >0$). Then stabilisation holds for~\eqref{eq.amortie} if and only if 
  \begin{equation}\label{obs}
  \begin{gathered}
 \exists h_0>0; \forall 0<h<h_0, \forall  (u,f) \in H^2(M)\times L^2(M), (h^2 \Delta + 1) u =f,  \\
 \| u\|_{L^2(M)} \leq C \bigl( \| a^{1/2} u\|_{L^2} + \frac {1}{h} \| f\|_{L^2}\Bigr).
 \end{gathered}
 \end{equation}
 \end{prop}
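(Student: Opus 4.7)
The plan is to apply Proposition~\ref{prop.resolvent} (in the case $m=0$ after replacing $A$ by the operator $\dot A$ on $\dot H^1\times L^2$ described in Section~\ref{sec.B.2.1}): stabilisation is equivalent to a uniform resolvent bound $\sup_{\tau\in\mathbb R}\|(A-i\tau)^{-1}\|_{\mathcal L(\mathcal H)}<\infty$ on the energy space, after a Neumann series extension from the imaginary axis into a strip $\Re\lambda\ge-\delta$. It therefore suffices to show that this uniform resolvent bound is equivalent to~\eqref{obs}.

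The algebraic bridge rewrites the resolvent equation $(A-i\tau)(u,v)=(f_1,f_2)$ as $v=i\tau u+f_1$ together with the Helmholtz-type equation
\begin{equation}\label{eq.plan.helmholtz}
(\Delta+\tau^2-m-ia\tau)\,u=f_2+(a+i\tau)f_1,
\end{equation}
and sets $h=1/|\tau|$, producing after dividing by $\tau^2$ a perturbation of $(h^2\Delta+1)u=g$. Two elementary facts recur: first, the absorption bound $\|au\|_{L^2}\le\|a\|_\infty^{1/2}\|a^{1/2}u\|_{L^2}$, which subsumes the perturbation $-iha$ into the observation term; second, the imaginary part of~\eqref{eq.plan.helmholtz} tested against $\bar u$, yielding $|\tau|\,\|a^{1/2}u\|_{L^2}^2=|\text{Im}(\text{RHS},u)|$, which via Cauchy--Schwarz and AM--GM controls $\|a^{1/2}u\|$ in terms of $\|F\|_{\mathcal H}$ and a fraction of $\|u\|$.

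For \emph{stabilisation $\Rightarrow$~\eqref{obs}}, given $u\in H^2$ with $(h^2\Delta+1)u=f$, set $\tau=1/h$ and $v=i\tau u$; a direct computation gives $(A-i\tau)(u,v)=(0,F_2)$ with $F_2=h^{-2}f-mu-i\tau au$. The resolvent bound provides $|\tau|\|u\|_{L^2}\le C\|F_2\|_{L^2}$; using the absorption bound on $\|au\|$ and absorbing the $h\|m\|_\infty\|u\|$ term for $h$ small produces~\eqref{obs}. For \emph{\eqref{obs} $\Rightarrow$ stabilisation}, applying~\eqref{obs} to~\eqref{eq.plan.helmholtz} together with the imaginary-part identity yields $\|u\|_{L^2}\le C(\|f_1\|_{L^2}+h\|f_2\|_{L^2})$; the obstruction is that $h^{-1}\|f_1\|_{L^2}$ is not controlled by $\|F\|_{\mathcal H}=\|f_1\|_{H^1}+\|f_2\|_{L^2}$. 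The remedy is the substitution $w=u-ihf_1$, which (using $\tau h=1$) satisfies
$$
(h^2\Delta+1-h^2m-iha)\,w=h^2f_2-ih^3\Delta f_1+ih^3mf_1.
$$
The $H^{-1}$ source $-ih^3\Delta f_1$ is then handled by a divergence-form variant of~\eqref{obs} (obtained by duality from the self-adjointness of $h^2\Delta+1$, which for $f=h\,\mathrm{div}\,G$ with $G\in L^2$ gives $\|u\|_{L^2}\le C(\|a^{1/2}u\|_{L^2}+\|G\|_{L^2})$). Applying this to $w$ and reconstituting $u=w+ihf_1$ produces $\|U\|_{\mathcal H}\le C\|F\|_{\mathcal H}$ uniformly for $|\tau|\ge\tau_0$.

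The low-frequency range $|\tau|\le\tau_0$ is handled using Corollary~\ref{coroA.4}: compactness of the resolvent reduces the question to ruling out purely imaginary eigenvalues of $A$. If $AU=i\tau U$ with $U=(u,v)$, then taking the real part of the energy inner product $\langle AU,U\rangle=i\tau\|U\|^2$ yields $\|a^{1/2}v\|^2=0$; hence $v$, and via $v=i\tau u$ also $u$, vanishes on the open set $\{a>0\}$, which is non-empty since $\int_M a>0$. Unique continuation for the Helmholtz-type equation $\Delta u+(\tau^2-m)u=0$ then forces $U\equiv 0$. The main obstacle will be spelling out rigorously the divergence-form variant of~\eqref{obs} used in the forward direction; this is a routine consequence of~\eqref{obs} by duality, but is the one step where the $H^1$-regularity of $f_1$ needs to be converted into an $L^2$-source through integration by parts.
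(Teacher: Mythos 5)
Your proposal follows essentially the same route as the paper: reduce via Proposition~\ref{prop.resolvent} to a uniform resolvent bound on $i\mathbb{R}$ for $\dot A$, pass via a Fredholm/compactness argument to an a priori estimate, use duality to absorb the $H^{-1}$ source coming from $\Delta f_1$, and dispose of low frequencies by compactness plus a nodal-set/unique-continuation argument. Your substitution $w=u-ihf_1$ is the paper's elimination in disguise: since $v=i\tau u+f_1$, one has $w=-ihv$, so the Helmholtz equation you write for $w$ is a rescaled copy of the paper's equation $(\Delta+\lambda^2-i\lambda a)v=i\lambda g+\Delta f$, and the ``divergence-form variant'' you invoke is exactly the paper's Lemma establishing~\eqref{obsbis}. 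Two small imprecisions: first, the divergence-form bound you state for $(h^2\Delta+1)u=h\,\mathrm{div}\,G$ should read $\|u\|_{L^2}\le C(\|a^{1/2}u\|_{L^2}+h^{-1}\|G\|_{L^2})$ --- this is what~\eqref{obsbis} yields from $\|h\,\mathrm{div}\,G\|_{H^{-1}}\le h\|G\|_{L^2}$ together with the $h^{-2}$ prefactor; since in your application $G=-ih^2\nabla f_1$ one still gets the needed $h\|\nabla f_1\|$ and the argument closes. Second, for $a\in L^\infty$ the set $\{a>0\}$ need not be open, only of positive measure; the unique-continuation step should therefore appeal, as the paper does, to the fact that nodal sets of non-trivial Laplace eigenfunctions (for $m=0$; real analyticity) have Lebesgue measure zero rather than to unique continuation from an open set.
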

  We prove the proposition for $m=0$. The proof for $m\not \equiv 0$ is similar (slightly simpler as we do not have to work with the operator $\dot{A}$ but can stick with $A$).
 From Proposition~\ref{prop.resolvent}, stabilisation is equivalent to the fact that the resolvent $(\dot{A} - \lambda)^{-1}$ is bounded on $\mathbb{C}^\delta$. Since $\dot{A}$ is maximal dissipative, its resolvent is defined (and bounded) on any domain $\mathbb{C}^{-\epsilon}$ ($\epsilon >0$). We deduce that  it is equivalent to prove that it is uniformly bounded on $i\mathbb{R} $ (and consequently by perturbation on a $\delta$ neighborhood of $i\mathbb{R} $). Since 
 $$ (\dot{A}- \lambda) =  ( 1+ (1-\lambda)(\dot{A}- 1)^{-1})( \dot{A}- 1),$$
 and $(\dot{A}- 1)^{-1}$ is compact (see Corollary~\ref{coroA.4}) on $\dot{\mathcal{H}}$ (see Corollary~\ref{coroA.4}), the operator $( 1+ (1-\lambda)(\dot{A}- 1)^{-1} )$ is Fredholm with index $0$ and consequently, $\dot{A}- \lambda$ is invertible iff it is injective. As a consequence, stabilisation is equivalent to the following {\em a priori} estimates
\begin{equation}\label{resolv-wave}
 \exists C>0; \forall \lambda \in \mathbb{R}, U \in \dot{H}^2 \times H^1, F \in \dot{H}^1 \times L^2,  (\dot{A} - i \lambda) U = F \Rightarrow \| U\|_{\dot{\mathcal{H}} } \leq C\| F\|_{\dot{\mathcal{H}} }.
 \end{equation}
   \subsubsection{High frequency resolvent estimates imply stabilisation}
   We argue by contradiction. We assume~\eqref{obs} holds and assume that~\eqref{resolv-wave} does not hold. Then there exists sequences $(\lambda_n), (U_n), (F_n)$ such that 
  $$ (\dot{A} - i \lambda_n) U_n = F_n, \qquad  \|U_n\|_{\dot{\mathcal{H}} } > n \| F_n\|_{\dot{\mathcal{H}} }.$$ 
  Since $U_n \neq 0$, we can assume $\| U_n\|_{\dot{\mathcal{H}} }=1$. 
  Extracting subsequences we can also assume that $\lambda_n \to \lambda \in \mathbb{R} \cup \{ \pm \infty\}$ as $n\to \infty $. We write 
  $$U_n= \begin{pmatrix} \dot{u}_n \\ v_n\end{pmatrix}, F_n= \begin{pmatrix} \dot{f}_n \\ g_n\end{pmatrix},$$ and distinguish according to three cases 
 
 \noindent{--} {Zero frequency: $\lambda =0$. }
  In this case, we have 
 $$ \dot{A} U_n = o(1)_{\dot{\mathcal{H}}} \Leftrightarrow \Pi v_n = o(1)_{\dot{H}^1}, \quad \Delta \dot{u}_n - a v_n = o(1)_{L^2}.$$
 We deduce that there exists $c_n \in \mathbb{C}$ such that 
 $$ v_n - c_n = o(1)_{H^1}, \qquad \Delta u_n - a c_n = o(1)_{L^2}.$$
 But 
 $$ \int_M \Delta u_n =0 \Rightarrow c_n \int_M a = o(1) \Rightarrow c_n = o(1).$$
 As a consequence, we get $ v_n = o(1) _{L^2}$ and $\Delta u_n = o(1)_{L^2} \Rightarrow \dot{u} _n = o(1) _{\dot{H} ^1}$. This contradicts  $\| U_n\|_{\dot{\dot{\mathcal{H}} }}=1$.
 
\noindent{--}  {Low frequency: $\lambda \in \mathbb{R}^*$.}
  In this case, we have 
 $$ (\dot{A}-i \lambda)  U_n = o(1)_{\dot{\mathcal{H}}} \Leftrightarrow \Pi v_n  - i \lambda \dot{u} _n= o(1)_{\dot{H}^1}, \quad \Delta \dot{u}_n - ( i \lambda +a)  v_n = o(1)_{L^2}.$$
 We deduce 
 $$ \Delta v_n - i\lambda ( a+ i\lambda) v_n = o(1)_{L^2} + \Delta (o(1) _{\dot{H}^1} ) = o(1)_{H^{-1}}.$$
 Since $(v_n)$ is bounded in $L^2$, from this equation, we deduce that $\Delta v_n$ is bounded in $H^{-1} $ and consequently $v_n$ is bounded in $H^1$. Extracting another subsequence, we can assume that $v_n$ converges in $L^2$ to $v$ which satisfies
 $$ \Delta v + \lambda ^2 v - i\lambda av =0.$$
 Taking the imaginary part of the scalar product with $v$ in $L^2$ gives (since $\lambda \neq 0$) 
 $ \int_{M} a | v|^2 =0$, and consequently $av =0$ which implies that $v$ is an eigenfunction of the Laplace operator. But since the zero set of non trivial eigenfunctions has Lebesgue measure $0$ in $M$,  $av=0$ implies that $v=0$ (and consequently $v_n = o(1)_{L^1}$). Now, we have 
 $$ \Delta \dot{u}_n = (i\lambda +a ) v_n + o(1)_{L^2} = o(1)_{L^2} \Rightarrow \dot{u}_n = o(1) _{\dot{H}^1}.$$
 This contradicts $\| U_n\|_{\dot{\mathcal{H}} }=1$.
 
  \noindent{--}  {High frequency $\lambda_n \rightarrow \pm \infty$. }
 We study the case $\lambda_n \rightarrow + \infty$, the other case is obtained by considering $\overline{U_n}$. Let $h_n = \lambda_n^{-1}$. 
 \begin{equation}
  \begin{gathered}
 (\dot{A}- i\lambda_n) U_n = o(1)_{\dot{\mathcal{H}}} \Leftrightarrow -i \lambda_n\dot{u}_n + \Pi v_n = o(1)_{\dot{H}^1}, \quad \Delta \dot{u}_n - (i \lambda_n+a) v_n = o(1)_{L^2}\\
 \Leftrightarrow \dot{u}_n = -i h_n \Pi v_n + o(h_n)_{\dot{H}^1}, \qquad( h_n ^2 \Delta  + 1 - i h_n a)v_n = o(h_n )_{L^2}+ o( h_n ^2)_{H^{-1}}\label{absurde}
 \end{gathered}
 \end{equation}
 
 To conclude in this regime, we need 
 \begin{lemm}
 The observation inequality~\eqref{obs} implies the more general
  \begin{gather}\notag
 \exists h_0>0; \forall 0<h<h_0, \forall  (u,f_1, f_2) \in H^2(M)\times L^2(M)\times H^{-1} (M), (h^2 \Delta + 1) u =f_1+ f_2,  \\
 \| h \nabla_x u \|_{L^2} + \| u\|_{L^2(M)} \leq C \bigl( \| a^{1/2} u\|_{L^2} + \frac {1}{h} \| f_1\|_{L^2}+\frac {1}{h^2} \| f_2\|_{H^{-1}}\Bigr). \label{obsbis}
 \end{gather}
 \end{lemm}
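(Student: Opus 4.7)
The plan is two steps. First, upgrade \eqref{obs} so as to accept a source in $L^2+H^{-1}$ by a low/high semiclassical frequency split of $u$. Second, use a one-line energy identity to recover the $\|h\nabla u\|_{L^2}$ bound from the $L^2$ bound obtained in the first step.

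\textbf{Step 1 (frequency split).} Fix $\chi\in C^\infty_0(\mathbb R)$ with $\chi\equiv 1$ on $[-2,2]$ and $\supp \chi\subset[-3,3]$, and set $\Pi_h:=\chi(h\sqrt{-\Delta})$, which commutes with $h^2\Delta+1$ by the spectral theorem. Decompose $u=u_\ell+u_h$ with $u_\ell=\Pi_h u$. On the spectral range of $1-\Pi_h$ one has $h|\xi|\ge 2$, so $|1-h^2|\xi|^2|\ge h^2|\xi|^2/2\ge 2$, and a direct spectral computation yields the operator bounds
\[
\|(h^2\Delta+1)^{-1}(1-\Pi_h)\|_{L^2\to L^2}\le C,\qquad \|(h^2\Delta+1)^{-1}(1-\Pi_h)\|_{H^{-1}\to L^2}\le \tfrac{C}{h},
\]
hence $\|u_h\|_{L^2}\le C\|f_1\|_{L^2}+\tfrac{C}{h}\|f_2\|_{H^{-1}}$. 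The low-frequency piece satisfies $(h^2\Delta+1)u_\ell=\Pi_h(f_1+f_2)\in L^2$, with $\|\Pi_h(f_1+f_2)\|_{L^2}\le\|f_1\|_{L^2}+\tfrac{C}{h}\|f_2\|_{H^{-1}}$ (the second inequality follows from $(1+|\xi|^2)^{1/2}\le Ch^{-1}$ on the spectral support of $\Pi_h$). Applying \eqref{obs} to $u_\ell$, bounding $\|a^{1/2}u_\ell\|_{L^2}\le\|a^{1/2}u\|_{L^2}+\|a\|_\infty^{1/2}\|u_h\|_{L^2}$, and summing the two pieces gives
\[
\|u\|_{L^2}\le CM,\qquad M:=\|a^{1/2}u\|_{L^2}+\tfrac{1}{h}\|f_1\|_{L^2}+\tfrac{1}{h^2}\|f_2\|_{H^{-1}}.
\]

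\textbf{Step 2 (gradient estimate).} Pairing the equation $(h^2\Delta+1)u=f_1+f_2$ with $\bar u$ and integrating by parts yields
\[
\|h\nabla u\|_{L^2}^2=\|u\|_{L^2}^2-\mathrm{Re}\!\int_M f_1\bar u\,d\mathrm{vol}-\mathrm{Re}\,\langle f_2,u\rangle_{H^{-1},H^1}.
\]
Bounding the last pairing by $\|f_2\|_{H^{-1}}(\|u\|_{L^2}+\|\nabla u\|_{L^2})$, writing $\|f_2\|_{H^{-1}}\|\nabla u\|_{L^2}=h^{-1}\|f_2\|_{H^{-1}}\cdot\|h\nabla u\|_{L^2}$, and applying Young's inequality absorbs $\tfrac{1}{2}\|h\nabla u\|_{L^2}^2$ into the left-hand side. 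Using Step 1 together with the trivial bounds $\|f_1\|_{L^2}\le hM$ and $\|f_2\|_{H^{-1}}\le h^2M$ (valid for $h\le h_0\le 1$), every remaining right-hand-side term is $\le C'M^2$, whence $\|h\nabla u\|_{L^2}\le C''M$; combined with Step 1, this is precisely \eqref{obsbis}.

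\textbf{Main obstacle.} Every step is elementary, but the cutoff $\chi$ in Step 1 must be supported strictly away from the characteristic region $h|\xi|=1$ of $h^2\Delta+1$, so that the high-frequency inversion genuinely has the claimed $H^{-1}\to L^2$ operator norm $O(h^{-1})$; once this is arranged, the rest is routine semiclassical bookkeeping.
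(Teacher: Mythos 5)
Your proof is correct, but it takes a genuinely different route from the paper's. The paper introduces the auxiliary damped operators $P_h^{\pm}=h^2\Delta+1\pm iha$, shows they are Fredholm of index $0$, derives injectivity (hence invertibility) from~\eqref{obs} together with the sign identity $h\|a^{1/2}u\|_{L^2}^2=\pm\Im(P_h^{\pm}u,u)$, obtains $(P_h^{\pm})^{-1}$ bounds $L^2\to L^2$ of size $h^{-1}$ and $L^2\to H^1$ of size $h^{-2}$, and then transfers the latter to an $H^{-1}\to L^2$ bound by \emph{duality}, using $(P_h^{+})^{*}=P_h^{-}$; finally it returns to $h^2\Delta+1$ by rewriting $P_h^{+}u=iahu+f_1+f_2$ and absorbing the damping term via $\|a^{1/2}u\|$. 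Your argument bypasses the damped resolvents and duality entirely: the spectral cutoff $\Pi_h=\chi(h\sqrt{-\Delta})$, supported strictly away from the characteristic set $h|\xi|=1$, lets you handle the high-frequency piece $(1-\Pi_h)u$ by direct functional-calculus estimates on $(h^2\Delta+1)^{-1}(1-\Pi_h)$ (which is $O(1)$ on $L^2$ and $O(h^{-1})$ on $H^{-1}$), while the low-frequency piece $\Pi_h u$ has an $L^2$ source of the right size (since $\|\Pi_h(1-\Delta)^{1/2}\|_{L^2\to L^2}=O(h^{-1})$) to which~\eqref{obs} applies verbatim. Your Step~2 is essentially the same manipulation the paper performs with its identity~\eqref{ipp}: pair the equation with $\bar u$, estimate the $H^{-1}$ pairing, and absorb $\tfrac12\|h\nabla u\|^2$ by Young. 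In exchange for being slightly longer, your route is more elementary — no Fredholm theory, no invertibility argument, no duality — whereas the paper's route additionally establishes invertibility of the damped semiclassical resolvents $P_h^{\pm}$, which is a structurally useful byproduct.
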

 \begin{proof}
 Let $P^\pm_h= h^2 \Delta + 1 \pm i ha $ defined on $L^2$ with domain $H^2$. Writing 
 $$ P_h^\pm = ( 1 + (2 \pm iha) (h^2 \Delta - 1)^{-1} ) ( h^2 \Delta -1),$$ and since $( h^2 \Delta -1)^{-1}$ is compact on $L^2$, we deduce that $( 1 + (2 \pm iha) (h^2 \Delta - 1)^{-1} )$ is Fredholm with index $0$, hence $P^\pm_h$ is invertible iff it is injective. On the other hand we have 
 $$ h\| a^{1/2} u\|_{L^2}^2  =\pm \Im (P^\pm_h u , u)_{L^2} \leq \|P_h^\pm u\|_{L^2} \| u\|_{L^2},$$
 which combined with~\eqref{obs} implies ($ (h^2 \Delta +1) u = P^\pm_h u\mp iha u$)
 \begin{multline}
  \| u\| ^2_{L^2} \leq C \bigl( \| a^{1/2} u\|^2_{L^2} + \frac {1}{h^2} (\| P^\pm_h u\|^2_{L^2} + h^2 \| au \|^2_{L^2})\Bigr) \\
  \leq \frac{C'}{h} \|P^\pm _h u\|_{L^2} \| u\|_{L^2} + \frac {C'}{h^2} \| P^\pm_h u\|^2_{L^2} \Rightarrow   \| u\| _{L^2}\leq  \frac {C''}{h} \| P^\pm_h u\|_{L^2}  
  \end{multline}
  Since 
  \begin{equation}\label{ipp} 
   |\| u\|_{L^2}^2 - \| h \nabla_x u \|_{L^2}^2| = |\Re (P^\pm_h u , u)_{L^2}|  \leq \|P^\pm _h u\|_{L^2} \| u\|_{L^2},
   \end{equation}
 We deduce that $P^\pm_h$ is injective hence bijective from $H^2$ to $L^2$ with inverse bounded by $C''/h$ from $L^2$ to $L^2$ and by $C/h^2$ from $L^2$ to $H^1$. We now proceed by duality to obtain~\eqref{obsbis}. The adjoint of $P_h^\pm$ is $P_h^\mp$ and is consequently bounded from $H^{-1}$ to $L^2$ by $C/h^2$. 
 Using again the identity~\eqref{ipp} we get that
 $$P_h^\pm  u = f_1+ f_2\Rightarrow \| h \nabla_x u \|_{L^2} + \| u\|_{L^2(M)} \leq \frac C h \|f_1\|_{L^2} + \frac C {h^2} \| f_2\|_{H^{-1}}.$$
 Finally 
 $$(h^2 \Delta + 1 ) u = f_1+ f_2\Rightarrow P_h^+ u = ia h u + f_1+ f_2,$$
 and we get 
 \begin{multline}
  \| h \nabla_x u \|_{L^2} + \| u\|_{L^2(M)} \leq C \bigl(  \frac {1}{h} \| iha u+ f_1\|_{L^2}+\frac {1}{h^2} \| f_2\|_{H^{-1}}\Bigr)\\
  \leq C' \bigl( \| a^{1/2} u\|_{L^2} + \frac {1}{h} \|  f_1\|_{L^2}+\frac {1}{h^2} \| f_2\|_{H^{-1}}\Bigr)
  \end{multline}
     \end{proof}
   
We now come back to our sequence satisfying~\eqref{absurde}. From~\eqref{obsbis}, \eqref{absurde}  implies 
 $$ \| h_n \nabla_x v_n \|_{L^2} + \| v_n \|_{L^2} = o(1)_{n\rightarrow + \infty},$$
 and in turn 
 $$ \| \nabla_x u_n \|_{L^2} =o(1)_{n\rightarrow + \infty}.$$
 This contradicts  
 $\| U_n\|_{\dot{\mathcal{H}} }=1$.
\subsubsection{Stabilisation imply resolvent estimates}
Consider now $U= \begin{pmatrix} \dot u\\ v\end{pmatrix}, F= \begin{pmatrix} \dot f\\ g\end{pmatrix}$ such that 
$$ (\dot{A} - i \lambda)  U = F \Leftrightarrow -i \lambda \dot{u} + \Pi v = \dot{f} \text{ and } (\Delta v +\lambda^2-i\lambda a) v= i \lambda g + \Delta f.$$
From~\eqref{obs} with  $h = \lambda^{-1}$, we get
$$\| v\|_{L^2} + \|h \nabla_x v\|_{L^2} \leq C \| g\|_{L^2} + C \| \Delta f\|_{H^{-1}} \leq C ( \| g\|_{L^2} + C \| \nabla_x f\|_{L^2}),$$
and also 
$$ \| \nabla_x u\|_{L^2} = h \| \nabla_x (v-f)\| \leq    C ( \| g\|_{L^2} + C \| \nabla_x f\|_{L^2}).$$
\section{Caracterization of stabilisation}\label{app:C}
Here we shall prove that the properties (1), (2), (3) and (4) of the Introduction are equivalent. $(2) \Rightarrow (1)$ is trivial. To show $(1) \Rightarrow (3)$ we fix $T$ such that $f(T) \leq 1/2$. Then since 
$$ E_m(u)(T) = E_m(u) (0) - \int_0^T \int_M a(x) |\partial_t u \|^2 (t,x) dx dt\leq \frac 1 2 E_m(u) (0),$$
we deduce 
$$ E_m(u) (0) \leq 2\int_0^T \int_M a(x) |\partial_t u \|^2 (t,x) dx dt,$$
which is (3). Conversely, if (3) is satisfied, we get 
$$  E_m(u)(T) = E_m(u) (0) - \int_0^T \int_M a(x) |\partial_t u \|^2 (t,x) dx dt\leq(1- \frac 1 C) E_m(u) (0).$$
Let $\delta = (1- \frac 1 C) <1$. Applying the previous estimate between $0$ and $T$, then $T$ and $2T$, etc, we get 
$$E_m(u)(nT)\leq \delta ^n E_m(u)(0),$$ hence the exponential decay along the discrete sequence of times $nT$. Finally, writing $nT \leq t < (n+1) T$, we get 
$$ E_m(u)(t) \leq E_m(u)(nT)\leq \delta^n E_m(u)(0) \leq e^{ \log(\delta) (\frac t T -1)} E_m(u)(0),$$
which is (2). It remains to prove that (3) and (4) are equivalent. We shall actually prove that if (3) holds for some $T>0$, then (4) holds for the same $T>0$.  Let us fix $T>0$ and assume that (4) does not hold, i.e. there exists sequences $(u_0^n, u_1^n)\in H^1\times L^2$ such that the corresponding solutions to the undamped wave equation~\eqref{eq.nonamortie} satisfy 
$$ E_m (u_n) (0) > n \int_0 ^T \int_M a(x) |\partial_t u^n|^2 (t,x) dtdx .$$ 
 This implies that $u_n$ is non identically $0$ and dividing $u_n$ by $\sqrt{ E_m (u_n)(0)})$, we can assume that $E_m (u_n)(0)=1$, and 
 \begin{equation}\label{petit}
 \int_0 ^T \int_M a(x) |\partial_t u_n|^2 (t,x) dtdx\leq \frac 1 n.
 \end{equation}
 Consider now $(v_n)$ the sequence of solutions to the damped wave equation~\eqref{eq.amortie}, with the same initial data $(u_0^n, u_1^n)$, and $w_n = u_n - v_n$ solution to 
 $$  (\partial_t ^2 - \Delta+a \partial_t + m ) w_n =- a \partial_t u_n, \quad (w_n\mid_{t=0}, \partial_t w_n \mid_{t=0} ) = (0,0) \in (H^1\times L^2) (M).$$
 From Duhamel formula and~\eqref{petit} we deduce 
 \begin{multline}
  \| (w_n, \partial_t w_n)\|_{L^\infty((0,T); H^1(M)\times L^2(M))} \leq \| a \partial_t u_n\| _{L^1(0,T); L^2(M)} \\
  \leq \|a\|_{L^\infty}^{1/2} \| a^{1/2} \partial_t u_n\| _{L^1(0,T); L^2(M)}= o(1)_{n\rightarrow + \infty}.
  \end{multline}
 We deduce 
 $$ E_m (v_n) (0) = 1+ o(1)_{n\rightarrow + \infty}$$ 
 and 
 $$ \int_0 ^T \int_M a(x) |\partial_t v_n|^2 (t,x) dtdx= \| a^{1/2} v_n \| _{L^2((0,T) \times M)}=o(1)_{n\rightarrow + \infty},$$
  which implies that (3) does not hold. As a consequence, we just proved $(3) \Rightarrow (4)$. The proof of $(4) \Rightarrow (3)$ is similar.

 \def\cprime{$'$} \def\cprime{$'$}

\end{document}